\documentclass[11pt]{amsart}

\usepackage[dvipsnames, svgnames, x11names]{xcolor}
\usepackage{amsmath}
\usepackage{amsfonts}
\usepackage{amssymb}
\usepackage{mathrsfs}
\usepackage[colorlinks, linkcolor = SteelBlue, citecolor = SteelBlue]{hyperref}
\usepackage{enumerate}
\usepackage{enumitem}
\usepackage{extarrows}
\usepackage{etoolbox}
\usepackage{color,xcolor}
\definecolor{cobalt}{RGB}{61,99,181}

\newcommand{\ii}{\mathrm{i}}
\newcommand{\ee}{\mathrm{e}}
\newcommand{\dd}{\,\mathrm{d}}
\newcommand{\rd}{\mathrm{d}}

\newcommand{\CC}{\mathbb{C}}
\newcommand{\DD}{\mathbb{D}}

\newcommand{\TT}{\mathbb{T}}

\newcommand{\Hol}{\mathrm{Hol}}

\DeclareMathOperator{\re}{Re}
\DeclareMathOperator{\rank}{rank}

\DeclareMathOperator{\tr}{tr}

\DeclareMathOperator{\spa}{span}

\newtheorem{thm}{Theorem}[section]

\newtheorem{lem}[thm]{Lemma}
\newtheorem{coro}[thm]{Corollary}
\newtheorem{prop}[thm]{Proposition}
\theoremstyle{definition}

\newtheorem*{ques}{Question}
\theoremstyle{remark}
\newtheorem*{claim}{Claim}
\newtheorem{remark}[thm]{Remark}

\hypersetup{colorlinks = true}

\title[Schatten class composition operators]{Schatten class membership of products of composition operators and their adjoints}
\author[Leng]{Qianrui Leng}
\address{College of Mathematics and Statistics, Chongqing University, Chongqing, 401331, P. R. China}
\email{17788619117@163.com}

\author[Zhao]{Xianfeng Zhao}
\address{College of Mathematics and Statistics, Chongqing University, Chongqing, 401331, P. R. China}
\email{xianfengzhao@cqu.edu.cn}

\keywords{ Composition operator; Schatten $p$-class; Hardy space; Bergman space}

\subjclass[2010]{47B33, 47B35}
\date{\today}

\begin{document}	
	\begin{abstract}
		In this paper, we investigate the Schatten class membership of the product of a composition operator and the adjoint of another composition operator on the Hardy and Bergman spaces. We provide several sufficient conditions and necessary conditions for these operators to belong to the Schatten classes. Moreover,  for some specific symbols  including linear fractional maps, finite Blaschke products, and inner functions, we obtain necessary and sufficient conditions.
	\end{abstract}

	\maketitle

	\tableofcontents
	
	\section{Introduction}\label{S1}
	
	Let $\DD$ denote the open unit disk $\{ z \in \CC : |z| < 1 \}$ and $\TT$ be its boundary in the complex plane $\mathbb C$. We write $\rd A$ and $\rd m$ for the normalized Lebesgue area measure on $\DD$ and the normalized Lebesgue arc measure on $\TT$, respectively. Let $L^ p (\TT) = L^ p (\TT, \rd m)$ and $L^ p (\DD) = L^ p (\DD, \rd A)$ be the standard Lebesgue spaces.
	For an analytic self-mapping $\varphi$ of $\DD$ (i.e., $\varphi: \DD \rightarrow \DD$ is analytic), the \textit{composition operator with symbol} $\varphi$ is defined by
	\begin{align*}
		C_{\varphi} f = f \circ \varphi, \qquad f \in \Hol (\DD),
	\end{align*}
where $\Hol (\DD)$ denotes the collection of analytic functions on $\DD$.

	The Hardy  and Bergman spaces  of the unit disk $\mathbb D$ are defined by
	\begin{align*}
		H^ 2 (\DD) = \left\{ f \in \Hol (\DD) : \sup_{0 \leqslant r < 1} \int_{\TT} |f(r \ee^ {\ii t})|^ 2 \dd m(\ee^ {\ii t}) < +\infty \right\}
	\end{align*}
	and  $L_a^ 2 = L^ 2 (\DD) \cap \Hol (\DD)$,
respectively. In this paper, we will consider composition operators on $H^2 (\DD)$ and $L_a^2$. In order to distinguish between these two different composition operators, we refer to them as the ``Hardy--composition operator'' and the ``Bergman--composition operator''. It is well-known that both types of composition operators are bounded, as a consequence of Littlewood's subordinate theorem (see \cite[Theorem 11.1]{Zhu} if necessary). The characterization of compact composition operators was obtained by Shapiro \cite{Sha}. In fact, using the Nevanlinna counting function (see Section \ref{a change of variables formula for integrals} for the definition) he also discussed the essential norms of Hardy--composition operators and Bergman--composition operators:
	\begin{thm}
		Let $\varphi$ be an analytic self-mapping of $\DD$.
		\begin{enumerate}[label = \textup{(\alph*)}]
			\item The essential norm of the Hardy--composition operator $C_{\varphi}$ is given by
			\begin{align*}
				\| C_{\varphi} \|_{\mathrm{e}}^ 2 = \varlimsup_{|z| \rightarrow 1^ -} {\frac{N_{\varphi} (z)}{\log{(1 / |z|)}}},
			\end{align*}
			where $N_{\varphi}$ is the Nevanlinna counting function of $\varphi$.
			\item The essential norm of the Bergman--composition operator $C_{\varphi}$ satisfies that
			\begin{align*}
				m \varlimsup_{|z| \rightarrow 1^ -} {\frac{N_{\varphi, 2} (z)}{[\log{(1 / |z|)}]^ 2}} \leqslant  \| C_{\varphi} \|_{\mathrm{e}}^ 2 \leqslant  \varlimsup_{|z| \rightarrow 1^ -} {\frac{N_{\varphi, 2} (z)}{[\log{(1 / |z|)}]^ 2}},
			\end{align*}
			where $m > 0$ is a constant and $N_{\varphi, 2}$ is the generalized counting function of $\varphi$ defined in \textup{\cite{Sha}}.
		\end{enumerate}
	\end{thm}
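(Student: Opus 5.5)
This is Shapiro's essential norm theorem. I would prove it by testing $C_\varphi$ against compact truncations on one side and against normalized reproducing kernels on the other, with the change of variables formula doing the work in both directions.

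\textbf{Hardy case, upper bound.} Let $f(0)=0$. The Littlewood--Paley identity gives $\|f\|_{H^2}^2 = |f(0)|^2 + 2\int_{\DD} |f'(z)|^2 \log(1/|z|) \dd A(z)$, with the normalization of $\dd A$ adjusted as needed. Combined with the change of variables formula of Section~\ref{a change of variables formula for integrals}, it gives
\begin{align*}
\|C_\varphi f\|^2 = |f(\varphi(0))|^2 + 2\int_{\DD} |f'(w)|^2 N_\varphi(w) \dd A(w).
\end{align*}
Let $K_n$ be the compact operator $f\mapsto C_\varphi(\sum_{k<n} \hat f(k) z^k)$, so that $\|C_\varphi\|_{\mathrm e}\le \lim_n \|C_\varphi R_n\|$, where $R_n$ is the tail projection. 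For $f$ in the range of $R_n$, split the integral over $|w|\le r$ and $r<|w|<1$.
\begin{itemize}
\item On $|w|\le r$, the contribution tends to $0$ uniformly on the unit ball of $R_nH^2$, since $\sup_{|w|\le r}|f'(w)|\to 0$ as $n\to\infty$.
\item On $|w|>r$, bound $N_\varphi(w)$ by $\bigl(\sup_{|w|>r} N_\varphi/\log(1/|w|)\bigr)\log(1/|w|)$. Then Littlewood--Paley bounds this part by that supremum times $\|f\|^2$.
\end{itemize}
Letting $n\to\infty$ and then $r\to1$ yields the upper bound. The point evaluation term is likewise $o(1)$ on the tails.

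\textbf{Hardy case, lower bound.} Use the normalized kernels $k_a$, which tend to $0$ weakly, so $\|C_\varphi\|_{\mathrm e}\ge \varlimsup_{|a|\to1}\|C_\varphi k_a\|$. Plugging $k_a$ into the formula above gives $\int |k_a'|^2 N_\varphi \dd A$. The quantity $|k_a'(w)|^2$ is comparable to $(1-|a|^2)^{-2}$ times an approximate identity on a small pseudo-hyperbolic disc $\Delta(a,\delta)$.

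The step I expect to be the main obstacle is extracting the pointwise value $N_\varphi(a)$ from this averaged integral. I would use the sub-mean value property of $N_\varphi$ off $\varphi(0)$, due to Shapiro via Jensen's formula: $N_\varphi(a)\le \frac{1}{|\Delta|}\int_{\Delta(a,\delta)} N_\varphi \dd A$. This gives
\begin{align*}
\|C_\varphi k_a\|^2 \gtrsim \frac{N_\varphi(a)}{1-|a|^2}.
\end{align*}
To get the sharp constant $1$ rather than merely a comparable one, note that $|k_a'|^2\dd A$ against the weight $\log(1/|w|)$ is a probability-like measure concentrated near $a$ as $\delta\to0$. Then compare $N_\varphi$ with its averages using subharmonicity, and use $\log(1/|w|)\sim 1-|w|$ near $\TT$.

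\textbf{Bergman case.} Repeat the argument with the Bergman Littlewood--Paley formula. Here $\|f\|^2 \approx |f(0)|^2 + \int |f'|^2 (\log(1/|z|))^2 \dd A$, and the change of variables replaces $N_\varphi$ by the generalized counting function $N_{\varphi,2}$. This formula is an equivalence of norms rather than an identity. The truncation argument therefore gives the upper bound, and the kernel test with the sub-mean value property of $N_{\varphi,2}$ gives the lower bound; the constant $m$ absorbs the loss from the norm equivalence and from the kernel estimate.
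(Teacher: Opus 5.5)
The paper gives no proof of this theorem; it quotes it from Shapiro \cite{Sha}, and your outline follows Shapiro's strategy. Your Hardy upper bound is correct as written, and so is the Bergman lower bound with some constant $m$. The genuine gap is the sharp constant $1$ in the lower bound of (a). Your claim that $2|k_a'(w)|^2\log(1/|w|)\dd A(w)$ concentrates on a small pseudo-hyperbolic disc $\Delta(a,\delta)$ is false. Under the substitution $w=\alpha_a(z)=(a-z)/(1-\overline{a}z)$, this measure becomes $\frac{2|a|^2}{1-|a|^2}\log\frac{1}{|\alpha_a(z)|}\dd A(z)$. The mass it gives $\Delta(a,\delta)$ is asymptotically at most $\delta^2/(1-\delta)^2$ as $|a|\to1$. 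As $a\to\zeta\in\TT$ it tends to the spread-out probability measure $\frac{1-|z|^2}{|1-\overline{\zeta}z|^2}\dd A(z)$, not to a point mass. So restricting to $\Delta(a,\delta)$ costs a factor of order $\delta^2$, and letting $\delta\to0$ only makes this worse. Dividing $N_\varphi$ by $\log(1/|w|)$ inside the integral does not help either, because $N_\varphi(w)/\log(1/|w|)$ has no regularity to exploit. As it stands, your argument yields only $\|C_\varphi\|_{\mathrm e}^2\geqslant c\varlimsup_{|a|\to1}N_\varphi(a)/\log(1/|a|)$ for some $c<1$.

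The missing idea is that the kernel weight is exactly uniform in M\"obius coordinates: $|k_a'(\alpha_a(z))|^2|\alpha_a'(z)|^2=\frac{|a|^2}{1-|a|^2}$. Combined with $N_\varphi\circ\alpha_a=N_{\alpha_a\circ\varphi}$, this gives
\begin{align*}
\|C_\varphi k_a\|_2^2\geqslant 2\int_{\DD}|k_a'|^2N_\varphi\dd A=\frac{2|a|^2}{1-|a|^2}\int_{\DD}N_{\alpha_a\circ\varphi}\dd A\geqslant\frac{2r^2|a|^2}{1-|a|^2}\,N_\varphi(a).
\end{align*}
The last step is the sub-mean-value property of $N_{\alpha_a\circ\varphi}$ on the \emph{large} disc $r\DD$. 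It is legitimate once $|a|$ is close enough to $1$ that $|\alpha_a(\varphi(0))|>r$. The comparison $2\log(1/|a|)\sim1-|a|^2$ is needed only at the single point $a$. Letting $|a|\to1$ and then $r\to1$ gives the sharp bound.

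In (b) the upper bound is also stated with constant $1$, but a bare norm equivalence yields it only up to a constant. Set $\|g\|_*^2:=|g(0)|^2+2\int_{\DD}|g'|^2(\log\frac{1}{|z|})^2\dd A=|\hat g(0)|^2+\sum_{k\geqslant1}|\hat g(k)|^2/k$. You need two facts:
\begin{enumerate}[label = \textup{(\roman*)}]
\item $\|g\|_*^2$ dominates $\|g\|_{L^2(\DD)}^2=\sum_{k\geqslant0}|\hat g(k)|^2/(k+1)$ for every $g$; apply this to $C_\varphi f$.
\item $\|f\|_*^2\leqslant\frac{n+2}{n+1}\|f\|_{L^2(\DD)}^2$ whenever $\hat f(k)=0$ for $k\leqslant n$; apply this to the tail.
\end{enumerate}
Together these make the loss vanish as $n\to\infty$. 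Finally, the sub-mean-value property of $N_{\varphi,2}$ does not come straight from Jensen's formula as it does for $N_\varphi$. One way to obtain it is from $N_{\varphi,2}(w)=2\int_0^1N_{\varphi_\rho}(w)\frac{\rd\rho}{\rho}$, where $\varphi_\rho(z)=\varphi(\rho z)$.
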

	For further discussions on compact composition operators, we refer the reader to \cite{MS, Sar, SS} and \cite{CiMa}. The characterization of composition operators in the Schatten classes was settled by Luecking and Zhu \cite{LZ}, see also \cite{Xia, HX} for further developments. The conclusion is as follows:
	\begin{thm} \label{therorem: characterization of Sp for Cphi}
		Let $\varphi$ be an analytic self-mapping of $\DD$  and $0 < p < \infty$.
		\begin{enumerate}[label = \textup{(\alph*)}]
			\item The Hardy--composition operator $C_{\varphi}$ belongs to the Schatten $p$-class $S_p (H^ 2 (\DD))$ if and only if
			\begin{align*}
				\int_{\DD} \left[ \frac{N_{\varphi} (z)}{\log{(1 / |z|)}} \right]^ {\frac{p}{2}} \, \frac{\rd A(z)}{( 1 - |z|^ 2 )^ 2} < +\infty,
			\end{align*}
			where $N_{\varphi}$ is the Nevanlinna counting function of $\varphi$.
			\item The Bergman--composition operator $C_{\varphi}$ belongs to the Schatten $p$-class $S_p (L_a^ 2)$ if and only if
			\begin{align*}
				\int_{\DD} \left[ \frac{N_{\varphi, 2} (z)}{ \log{(1 / |z|)} } \right]^ {\frac{p}{2}} \frac{\rd A(z)}{(1 - |z|^ 2)^ 2} < +\infty,
			\end{align*}
			where $N_{\varphi, 2}$ is the generalized counting function of $\varphi$ defined in \textup{\cite{Sha}}.
		\end{enumerate}
	\end{thm}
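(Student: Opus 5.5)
The plan is to reduce the question to the Schatten class membership of a positive Toeplitz operator on a weighted Bergman space, and then to use Luecking's characterization of such Toeplitz operators in $S_q$. Recall that $C_\varphi\in S_p$ if and only if $C_\varphi^*C_\varphi\in S_{p/2}$, and that $C_\varphi^*C_\varphi$ is determined by the quadratic form $f\mapsto\|f\circ\varphi\|^2$. So the whole proof is about rewriting this quadratic form.

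\emph{Hardy case.} I would start from the Littlewood--Paley identity
\begin{align*}
\|f\|_{H^2}^2=|f(0)|^2+2\int_{\DD}|f'(z)|^2\log\frac{1}{|z|}\dd A(z).
\end{align*}
Applying it to $f\circ\varphi$ and using the non-univalent change of variables formula (Stanton's formula) gives
\begin{align*}
\|f\circ\varphi\|_{H^2}^2=|f(\varphi(0))|^2+2\int_{\DD}|f'(w)|^2N_\varphi(w)\dd A(w).
\end{align*}
The first term is a rank-one form, so it does not affect Schatten membership. Differentiation $f\mapsto f'$ is, up to constants, a unitary map from $H^2_0$ onto the weighted Bergman space $A^2_{\log}$ with weight $\log(1/|z|)$, which is equivalent to $(1-|z|^2)\,\dd A$. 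Modulo finite rank, $C_\varphi^*C_\varphi$ is therefore unitarily equivalent to the Toeplitz operator $T_\mu$ on this space, where $\dd\mu=N_\varphi\,\dd A$.

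Luecking's theorem then says that $T_\mu\in S_{p/2}$ if and only if the normalized averages
\begin{align*}
\widehat\mu(z)=\frac{\mu(D(z,r))}{\int_{D(z,r)}\log(1/|w|)\dd A(w)}
\end{align*}
over pseudo-hyperbolic discs $D(z,r)$ belong to $L^{p/2}(\dd\lambda)$, where $\dd\lambda=(1-|z|^2)^{-2}\dd A$.

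\emph{From averages to point values.} The remaining task is to show that $\widehat\mu\in L^{p/2}(\dd\lambda)$ is equivalent to the integral condition in (a), i.e.\ that averages of $N_\varphi/\log(1/|\cdot|)$ may be replaced by point values. One direction uses Shapiro's sub-mean-value property of the counting function near the boundary: $N_\varphi(z)\lesssim$ the average of $N_\varphi$ over $D(z,r)$. This gives $\text{integrand}\lesssim\widehat\mu^{p/2}$ pointwise. For the other direction:
\begin{itemize}
\item when $p/2\ge1$, I would apply Jensen's inequality, then Fubini, together with the invariance of $\dd\lambda$ and the comparability of $1-|z|$ across the disc;
\item when $p/2<1$, I would instead cover $\DD$ by a lattice of discs and dominate each average by a supremum over a slightly larger disc. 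The sub-mean-value property applied at every point of that larger disc then bounds the supremum by the average over a still larger disc, and the finite overlap of the lattice closes the estimate.
\end{itemize}

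\emph{Bergman case.} Part (b) would follow the same scheme. This time one uses the Bergman Littlewood--Paley formula, with weight $(\log(1/|z|))^2$, together with Shapiro's generalized counting function $N_{\varphi,2}$ arising from the same change of variables. The result is a Toeplitz operator on the Bergman space with weight $(1-|z|)^2$, and the normalization must be adjusted so that it matches the integral condition stated in (b).

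I expect the main obstacle to be the small-exponent range $p<2$ of the last step. There Jensen's inequality fails, and the argument rests on the precise form of the sub-mean-value inequality for $N_\varphi$ (and for $N_{\varphi,2}$), including handling the region near $\varphi(0)$, where the counting function has a logarithmic singularity and the inequality is unavailable. I would try first to excise a fixed compact disc around the origin, which contributes only finitely many singular values, and to apply the lattice argument on its complement.
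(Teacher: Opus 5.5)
The paper gives no proof of this statement; it quotes it from Luecking and Zhu. Your architecture is essentially theirs: Littlewood--Paley plus the change of variables turns $C_\varphi^*C_\varphi$, modulo rank one, into the Toeplitz operator with symbol $N_\varphi\,\rd A$ on the $\log(1/|z|)$-weighted Bergman space (exactly the paper's lemma $D_\varphi^*D_\varphi=T_\Phi$); Luecking's trace-ideal criterion follows; then averages are replaced by point values. Necessity for all $p$ and sufficiency for $p\geqslant 2$ work as you describe.

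\textbf{The gap: sufficiency for $0<p<2$.} Let $F=N_\varphi/\log(1/|\cdot|)$, $q=p/2<1$, and let $D(a_k,r)$ be a lattice of discs. You need
\[
\sum_k\Bigl(\frac{1}{A(D(a_k,r))}\int_{D(a_k,r)}F\,\rd A\Bigr)^q\lesssim\int_{\DD}F^q\,\rd\lambda ,\qquad \rd\lambda=\frac{\rd A}{(1-|z|^2)^2},
\]
which is a reverse Jensen inequality. Shapiro's sub-mean-value property for $N_\varphi$ only bounds $\sup_D N_\varphi$ by the average of $N_\varphi$ over a larger disc. So your chain ``average $\leqslant$ supremum $\lesssim$ average over a larger disc'' compares first-power averages with first-power averages, and you land back at a $q$-th power of an average. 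As written, the step is circular.

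What is missing is a sub-mean-value inequality for the \emph{power} $N_\varphi^q$: for every disc $\Delta=\Delta(a,s)$ with $\overline{\Delta}\subset\DD$ and $\varphi(0)\notin\Delta$,
\[
N_\varphi(a)^q\leqslant\frac{2^{1+2/q}}{A(\Delta)}\int_\Delta N_\varphi^q\,\rd A .
\]
This follows from Shapiro's inequality on all subdiscs by the Fefferman--Stein iteration:
\begin{itemize}
\item Set $d(w)=s-|w-a|$ and $M=\sup_\Delta d^{2/q}N_\varphi$, which is finite by Littlewood's inequality.
\item Choose $w_0$ with $d(w_0)^{2/q}N_\varphi(w_0)\geqslant M/2$.
\item Apply Shapiro's inequality on $\Delta(w_0,d(w_0)/2)$, where $N_\varphi\leqslant M(d(w_0)/2)^{-2/q}$, and split $N_\varphi=N_\varphi^qN_\varphi^{1-q}$.
\end{itemize}
This gives $M^q\leqslant 2^{1+2/q}\int_\Delta N_\varphi^q\,\rd A$, while $N_\varphi(a)\leqslant s^{-2/q}M$. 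With this lemma, $\sup_{D(a_k,r)}F^q\lesssim\int_{D(a_k,R)}F^q\,\rd\lambda$ for $|a_k|$ near $1$, and your finite-overlap argument closes. The same lemma is needed for $N_{\varphi,2}$ in (b).

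\textbf{Part (b).} Do not try to ``adjust the normalization'' to match the displayed condition. Your scheme, with weight $[\log(1/|z|)]^2$ in the Bergman Littlewood--Paley formula and symbol $N_{\varphi,2}\,\rd A$, produces
\[
\int_{\DD}\Bigl[\frac{N_{\varphi,2}(z)}{[\log(1/|z|)]^2}\Bigr]^{\frac p2}\frac{\rd A(z)}{(1-|z|^2)^2}<+\infty.
\]
This is the Luecking--Zhu condition and matches Theorem 1.1(b). The printed version, with a single power of $\log(1/|z|)$, is a misprint and is false. For $\varphi(z)=z$ one has $N_{\varphi,2}(z)/\log(1/|z|)=\log(1/|z|)$, and $\int_{\DD}[\log(1/|z|)]^{p/2}(1-|z|^2)^{-2}\,\rd A(z)<+\infty$ for every $p>2$. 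Yet $C_\varphi=I$ is not even compact on $L_a^2$.

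\textbf{Minor correction.} The part of $\mu$ on the excised disc $\{|z|\leqslant\rho\}$ (with $|\varphi(0)|<\rho<1$) does not give a finite-rank operator. It does give one whose approximation numbers decay geometrically, hence lying in every $S_q$, $q>0$, which is all you need.
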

	
	Although the compactness and Schatten class membership of a single composition operator on the Hardy/Bergman space have been completely characterized, the results for the Schatten class of operators of the form  $C_{\psi}^* C_{\varphi}$ and $C_{\varphi} C_{\psi}^*$ remain unknown.
The study of the product of a composition operator and the adjoint of another composition operator originated in the literatures \cite{Cli, CZ}.
In these two papers, the authors established  necessary and sufficient conditions, under varied hypothesis on the induced maps, for the operators $C_{\psi}^*C_{\varphi}$ and $C_{\varphi}C_{\psi}^*$  to be compact on the  Hardy space. The same problem in the setting of the Bergman space  was investigated by Clifford and Zheng \cite{CZ2}. They completely characterized the compactness of the operators $C_{\varphi} C_{\psi}^ *$ and, under the additional assumption that $\varphi$ and $\psi$ are univalent, the compactness of $C_{\psi}^ * C_{\varphi}$. For further studies on the operators $C_{\psi}^ * C_{\varphi}$ and $C_{\varphi} C_{\psi}^ *$, including their numerical range, invertibility and finite-rank property,  we refer the reader to \cite{CDW, CDWZ, CLW, CLN}.
	
	Motivated by conclusions concerning the operators $C_{\psi}^* C_{\varphi}$ and $C_{\varphi} C_{\psi}^*$, it is natural to ask:

	\begin{ques} \label{question}
		When are the operators $C_{\psi}^* C_{\varphi}$ and $C_{\varphi} C_{\psi}^*$ in the Schatten classes?
	\end{ques}

	Studying the above question would provide a precise characterization of these operators in terms of their symbols and yield deeper insights into their compactness. On the one hand, the study of the Schatten $p$-class is much more difficult than that of compact operators. The key difference lies in the singular values: they must be $p$-summable rather than just converging to zero. Furthermore, the space of Schatten $p$-class operators is not closed in the operator norm. This means that standard approximation techniques often fail, as the operator-norm limit of a sequence of operators in the Schatten $p$-class may no longer belong to the Schatten $p$-class.  On the other hand, the technique used to characterize the Schatten class operators of the form $C_\psi^* C_\varphi$ and $C_\varphi C_\psi^*$ differs significantly from those used for a single composition operator. This is because such operators are neither positive nor symmetric. Therefore, the approaches of reducing the problem to positive Toeplitz operators and of using Carleson measures are no longer applicable.

In this paper, by using the  characterizations of Schatten class operators and the Aleksandrov--Clark measures, we investigate conditions under which operators of the form $C_\varphi C^*_\psi$ and $C^*_\psi C_{\varphi}$ belong to the Schatten classes on the Hardy and Bergman spaces.
The structure of the paper is as follows. Section \ref{preliminary} contains some necessary preliminaries that will be used in the sequel. Section \ref{composition operators on the Hardy space} is the main part of this paper, where we focus on the case of the Hardy space. We shall first consider composition operators whose symbols are linear fractional maps, and establish complete characterizations of $C_{\psi}^ * C_{\varphi}$ and $C_{\varphi} C_{\psi}^ *$ in the Schatten classes. See Theorem \ref{theorem: characterization of Sp for linear fractional symbols} for the details. Then we turn to  discussing the operators $C_{\psi}^ * C_{\varphi}$ and $C_{\varphi} C_{\psi}^ *$ with general symbols, and provide criteria for those operators to be in
	\begin{itemize}
		\item the Schatten classes (Theorems \ref{proposition: sufficient condition for Cpsi*Cphi in Sp} and \ref{proposition: sufficient condition for Cphi Cpsi* in Sp}),\vspace{2mm}
		\item the Hilbert--Schmidt class (Propositions \ref{proposition: sufficient condition for S2, Hardy space} and \ref{proposition: sufficient condition for S2, Hardy space, II}, and Remark \ref{remark: proposition: sufficient condition for S2, Hardy space}), and\vspace{2mm}
		\item the trace class (Proposition \ref{proposition: sufficient condition for S1, Hardy space}).
	\end{itemize}
	Furthermore, we also obtain  trace formulas for both  $C_{\psi}^ * C_{\varphi}$ and $C_{\varphi} C_{\psi}^ *$ in Propositions \ref{proposition: trace formula, I} and \ref{proposition: trace formula, II}. Specific symbols such as finite Blaschke products and inner functions are considered in Theorems \ref{theorem: characterization of CB*Cpsi in S2} and \ref{theorem: inner function symbols}.

Since many arguments carry over directly to the Bergman space, we briefly state in Section \ref{composition operators on the Bergman space} the corresponding Bergman space versions of the results from Section \ref{composition operators on the Hardy space}.

	\section{Preliminaries} \label{preliminary}
	
	\subsection{The Hardy and Bergman spaces}\mbox{}
	
	In addition to the definitions given in Section \ref{S1}, the Hardy space can also be defined as a subspace of $L^ 2 (\TT)$ in the following way:
	\begin{align*}
		H^ 2 = \left\{ f \in L^ 2 (\TT) : \int_{\TT} f(\ee^ {\ii t}) \ee^ {\ii n t} \dd m(\ee^ {\ii t}) = 0 \ \textrm{ for } \ n = 1, 2, \cdots \right\}.
	\end{align*}
	Indeed, $H^ 2$ is a closed subspace of $L^ 2 (\TT)$ and hence a Hilbert space. Let $\langle \cdot, \cdot \rangle_2$ and $\| \cdot \|_2$ denote the inner product and the norm on $L^ 2 (\TT)$, respectively. There exists a canonical isometric isomorphism between $H^ 2$ and $H^ 2 (\DD)$.
	Let
	\begin{align*}
		P_z (\ee^ {\ii \theta}) = \frac{1 - |z|^ 2}{|\ee^ {\ii \theta} - z|^ 2} \qquad (\ee^ {\ii \theta} \in \TT)
	\end{align*}
	be the \textit{Poisson kernel} at $z \in \DD$.
	The Poisson integral (harmonic extension) of $f \in H^ 2$ is given by
	\begin{align*}
		\mathcal{P} [f] (z) =  \int_{\TT} f(\ee^ {\ii t}) P_z (\ee^ {\ii t}) \dd m(\ee^ {\ii t}), \qquad z \in \DD,
	\end{align*}
	which is analytic on $\DD$. With a slight abuse of notation, we often write $f(z)$ for $\mathcal{P} [f](z)$. Then the Poisson integral establishes an isometric isomorphism from $H^ 2$ onto $H^ 2 (\DD)$, and the inverse of $\mathcal{P}$ is the radial limit, namely,
	\begin{align*}
		\lim_{r \rightarrow 1^ -} {f (r \ee^ {\ii \theta})} = \lim_{r \rightarrow 1^ -} {\mathcal{P} [f] (r \ee^ {\ii \theta})} = f(\ee^ {\ii \theta})  \ \ \ \ (f \in H^ 2)
	\end{align*}
for  $m$-a.e. $\ee^ {\ii \theta} \in \TT$.
	Moreover, for each $f$ in $H^ 2$, the integral $$\int_{\TT} |f (r \ee^ {\ii t})|^ 2 \dd m(\ee^ {\ii t})$$ increases to $\| f \|_2^ 2$ as $r \rightarrow 1^ -$.
	
	Since each evaluation functional $f \longmapsto f(z)$ is  bounded, $H^ 2$ is in fact a reproducing kernel Hilbert space. It is known that the  reproducing kernel for $H^2$ at $z \in \DD$ is given by
	\begin{align*}
		K_z (\ee^ {\ii \theta}) = \frac{1}{1 - \overline{z} \ee^ {\ii \theta}}, \qquad \ee^ {\ii \theta} \in \TT,
	\end{align*}
	and thus
	\begin{align*}
		f(z) = \langle f, K_z \rangle_2  \ \ \ \  (z \in \DD)
	\end{align*}
for all $f\in H^2$.
	Besides, the normalized reproducing kernel at $z \in \DD$ is given by
	\begin{align*}
		k_z (\ee^ {\ii \theta}) = \frac{K_z (\ee^ {\ii \theta})}{\| K_z \|_2} = \frac{\sqrt{1 - |z|^ 2}}{1 - \overline{z} \ee^ {\ii \theta}}, \qquad \ee^ {\ii \theta} \in \TT.
	\end{align*}
	For more information about the Hardy space, one can consult \cite{Dur, M-AR} and \cite{Zhu}.
	
	Like the Hardy space, the Bergman space $L_a^ 2$ is also a reproducing kernel Hilbert space. One can verify that the reproducing kernel for $L_a^2$ at $z \in \DD$ is
	\begin{align*}
		K_z (w) = \frac{1}{\left( 1 - \overline{z} w \right)^ 2}, \qquad w \in \DD,
	\end{align*}
	and  the associated normalized reproducing kernel at $z\in \mathbb D$ is given by
	\begin{align*}
		k_z (w) = \frac{K_z (w)}{\| K_z \|_{L^ 2 (\DD)}} = \frac{1 - |z|^ 2}{\left( 1 - \overline{z} w \right)^ 2}, \qquad w \in \DD.
	\end{align*}
	While $K_z$ and $k_z$ are used for reproducing kernels for both Hardy and Bergman spaces, their specific references will be apparent from the context. Additionally, for more knowledge about the Bergman space, we refer to the book \cite{Zhu}. 	
	
	Besides $L_a^ 2$, the \textit{weighted Bergman space}
	\begin{align*}
		L_a^ 2 (\rd \sigma) = L^ 2 (\DD, \rd \sigma) \cap \Hol (\DD)
	\end{align*}
	will also be considered, where $\rd \sigma (z) = \log{(1 / |z|^ 2)} \dd A(z)$ and $L^ 2 (\DD, \rd \sigma)$ is the corresponding Lebesgue space. $L_a^ 2 (\rd \sigma)$ is a closed subspace of $L^ 2 (\DD, \rd \sigma)$, and  it is a reproducing kernel Hilbert space. In fact, our introduction of $L_a^ 2 (\rd \sigma)$ is motivated by the following theorem.
	
	\begin{thm}[Littlewood--Paley, \cite{Zhu}] \label{theorem: Littlewood-Paley identity}
		For any two functions $f, g \in H^ 2$, we have the identity
		\begin{align*}
			\langle f, g \rangle_2 = f(0) \overline{g(0)} + \int_{\DD} f'(z) \overline{g'(z)} \dd \sigma(z).
		\end{align*}
	\end{thm}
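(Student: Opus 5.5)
By sesquilinearity, the Littlewood--Paley identity (Theorem~\ref{theorem: Littlewood-Paley identity}) reduces to the case of monomials, which we then extend by density and continuity. Both sides of the identity are sesquilinear in $(f,g)$, so polarization shows it suffices to prove the norm version
\begin{align*}
	\| f \|_2^ 2 = |f(0)|^ 2 + \int_{\DD} |f'(z)|^ 2 \dd \sigma(z), \qquad f \in H^ 2 .
\end{align*}
Alternatively, we can expand both $f$ and $g$ in Taylor series and compare coefficients directly. We will take the second route because it gives the sesquilinear form at once.

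Write $f(z)=\sum_{n\ge 0} a_n z^n$ and $g(z)=\sum_{n\ge0} b_n z^n$, with $\langle f,g\rangle_2=\sum_n a_n\overline{b_n}$. The key computation is the pairing of monomials against $\rd\sigma$. In polar coordinates $z=r\ee^{\ii t}$, with $\rd A = \frac{1}{\pi} r\,\rd r\,\rd t$, the angular integral kills all cross terms $z^{n-1}\overline{z}^{k-1}$ with $n\ne k$. For $n=k\ge1$ it leaves
\begin{align*}
	\int_{\DD} |z|^{2(n-1)} \log\frac{1}{|z|^2}\dd A(z) = 2\int_0^1 r^{2n-1}\log\frac{1}{r^2}\dd r = \int_0^1 s^{n-1}\log\frac1s \dd s = \frac{1}{n^2},
\end{align*}
using the substitution $s=r^2$ and the standard integral $\int_0^1 s^{n-1}\log(1/s)\dd s=1/n^2$, which follows from one integration by parts. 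Hence $\int_{\DD} (z^n)'\overline{(z^k)'}\dd\sigma = n^2\cdot n^{-2}\delta_{nk}=\delta_{nk}$ for $n,k\ge1$. The constant term contributes $a_0\overline{b_0}=f(0)\overline{g(0)}$, so the identity holds for polynomials.

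To pass to general $f,g\in H^2$, we first treat the diagonal case. The computation above shows $\int_{\DD}|f'|^2\dd\sigma=\sum_{n\ge1}|a_n|^2$. This holds via monotone convergence: integrate over the disks $|z|<\rho$, where the power series converges uniformly, so orthogonality applies there; then let $\rho\to1^-$. Consequently the map $f\mapsto f'$ is an isometry (up to the constant term) from $H^2$ into $L^2(\DD,\rd\sigma)$. The off-diagonal integral is then absolutely convergent by Cauchy--Schwarz, and it is continuous in $(f,g)$. Since polynomials are dense in $H^2$, the identity extends by continuity.

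The only delicate point is justifying the interchange of summation and integration up to the boundary, where the weight $\log(1/|z|^2)$ degenerates. The truncation to $|z|<\rho$ together with monotone convergence of the nonnegative diagonal quantity handles this cleanly.
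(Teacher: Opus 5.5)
Your proof is correct, and it is the standard Taylor-coefficient argument: the paper gives no proof of its own and simply cites the reference. The steps all check out: the polar-coordinate evaluation $\int_{\DD}|z|^{2(n-1)}\log(1/|z|^2)\,\rd A(z)=1/n^2$, the truncation to $|z|<\rho$ followed by monotone convergence, and the passage to the sesquilinear form. The only redundancy is that once the diagonal identity holds for every $f\in H^2$, polarization alone gives the full identity, so your separate density-and-continuity step is not needed.
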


	Using the above theorem, it is not hard to establish a unitary equivalence between $H^ 2$ and the space $\CC \oplus L_a^ 2 (\rd \sigma)$. Specifically, let
	\begin{align} \label{for1}
		\begin{matrix}
			U: & H^ 2 & \longrightarrow & \CC \oplus L_a^ 2 (\rd \sigma) \vspace{2mm}\\
			& f & \longmapsto &
			\begin{bmatrix}
				f(0) \\
				f'
			\end{bmatrix}
			.
		\end{matrix}
	\end{align}
	By Theorem \ref{theorem: Littlewood-Paley identity}, $U$ is an isometry. Since $U$ is also a surjection as one can easily show, we obtain that $U$ is a unitary operator. By means of $U$, any problem on $H^ 2$ can be transferred to a problem on $\CC \oplus L_a^ 2 (\rd \sigma)$ or even $L_a^ 2 (\rd \sigma)$.
	
	Let us end this subsection with the  following theorem about reproducing kernel Hilbert spaces.
	
	\begin{thm}[{\cite[Theorem 2.4]{PR}}] \label{theorem: reproducing kernels}
		Suppose $\mathcal{H}$ is a reproducing kernel Hilbert space on $X$, and $K_x$ is the reproducing kernel at $x \in X$. For any orthonormal basis  $\{ e_j \}_{j \in \Lambda}$ of $\mathcal{H}$, we have
		\begin{align*}
			K_x (y) = \sum_{j \in \Lambda} {\overline{e_j (x)} e_j (y)},
		\end{align*}
		where the series on the right hand side converges pointwise.
	\end{thm}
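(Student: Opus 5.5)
The statement to prove is Theorem \ref{theorem: reproducing kernels}: for any orthonormal basis $\{e_j\}_{j\in\Lambda}$ of $\mathcal{H}$, $K_x(y)=\sum_{j}\overline{e_j(x)}e_j(y)$, with pointwise convergence. The plan is to expand the kernel function $K_x$ in the given basis and then evaluate at $y$, using that point evaluation at $y$ is a bounded functional.

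First, fix $x\in X$. Since $K_x\in\mathcal{H}$ and $\{e_j\}$ is an orthonormal basis, Parseval's theorem gives
\begin{align*}
K_x=\sum_{j\in\Lambda}\langle K_x,e_j\rangle e_j,
\end{align*}
with convergence in the norm of $\mathcal{H}$. Only countably many coefficients are nonzero, and the net of finite partial sums converges unconditionally. By the reproducing property, $\langle e_j,K_x\rangle=e_j(x)$, so $\langle K_x,e_j\rangle=\overline{e_j(x)}$. Hence $K_x=\sum_j\overline{e_j(x)}e_j$ in norm.

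Second, apply the evaluation functional at $y$. Since $f\mapsto f(y)=\langle f,K_y\rangle$ is continuous, it commutes with norm-convergent (unconditional) sums. Therefore
\begin{align*}
K_x(y)=\Big\langle \sum_j\overline{e_j(x)}e_j,\,K_y\Big\rangle=\sum_j\overline{e_j(x)}e_j(y).
\end{align*}
The convergence is in fact absolute. By Cauchy--Schwarz and Bessel's inequality,
\begin{align*}
\sum_j|e_j(x)||e_j(y)|\le\Big(\sum_j|\langle K_x,e_j\rangle|^2\Big)^{1/2}\Big(\sum_j|\langle K_y,e_j\rangle|^2\Big)^{1/2}=\|K_x\|\,\|K_y\|<\infty.
\end{align*}
So the series converges pointwise in any ordering.

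The only point needing care is that $\Lambda$ may be uncountable, so "series" has to be read as an unconditional sum. This is handled by noting that only countably many terms are nonzero, and that absolute summability gives order-independence. Nothing else is an obstacle.
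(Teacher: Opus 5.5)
Your proof is correct and is essentially the standard argument behind the result the paper cites (Paulsen--Raghupathi, Theorem 2.4; the paper gives no proof of its own): expand $K_x=\sum_j\overline{e_j(x)}\,e_j$ in norm using $\langle K_x,e_j\rangle=\overline{e_j(x)}$, then evaluate at $y$, which is allowed because point evaluation is a bounded functional. Your extra Cauchy--Schwarz estimate giving absolute convergence is a nice addition; note that the equality there comes from Parseval's identity rather than Bessel's inequality, although the inequality alone is enough.
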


	 Noting that $\{ (n + 1) z^ n \}_{n = 0}^ {\infty}$ forms an orthonormal basis  for $L_a^ 2 (\rd \sigma)$, the function
	\begin{align*}
		K_z (w) = \sum_{n = 0}^ {\infty} {(n + 1)^ 2 \left( \overline{z} w \right)^ n} = \frac{1 + \overline{z} w}{\left( 1 - \overline{z} w \right)^ 3}
	\end{align*}
	is the reproducing kernel for $L_a^ 2 (\rd \sigma)$ at $z$.
	
	\subsection{A change of variables formula for integrals} \label{a change of variables formula for integrals} \mbox{}

	For an analytic self-mapping $\varphi$ of $\DD$, its Nevanlinna counting function is defined by
	\begin{align*}
		N_{\varphi} (z) = \sum_{\varphi (w) = z} {\log{\frac{1}{|w|}}}, \qquad z \in \DD \backslash \{ \varphi (0) \},
	\end{align*}
	where the summation accounts for the multiplicity of each root. Littlewood's inequality is an elementary estimate of $N_{\varphi}$:
	\begin{align}
		N_{\varphi} (z) \leqslant \log{\left| \frac{1 - \overline{\varphi (0)} z}{\varphi (0) - z} \right|}, \qquad z \in \DD \backslash \{ \varphi (0) \}. \label{ineq3}
	\end{align}
	A proof of the above inequality can be found in \cite[Corollary 11.3]{Zhu}. Additionally,  a generalization of (\ref{ineq3}) to the case of generalized counting functions can be found in \cite[Proposition 6.3]{Sha}. With the help of the Nevanlinna counting function, a change of variables formula can be established as follows.
	
	\begin{thm}[{\cite[Proposition 11.5]{Zhu}}] \label{theorem: a change of variables}
		If $f$ is nonnegative on $\DD$ and $\varphi$ is an analytic self-mapping of $\DD$, then
		\begin{align*}
			\int_{\DD} f (\varphi (z)) |\varphi' (z)|^ 2 \log{\frac{1}{|z|}} \dd A(z) = \int_{\DD} f(z) N_{\varphi} (z) \dd A(z).
		\end{align*}
	\end{thm}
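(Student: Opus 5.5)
The plan is to reduce the statement to the area formula for analytic maps, applied to the set where $\varphi$ is locally injective. Since the identity is between integrals of nonnegative functions, both sides may be $+\infty$, and it suffices to prove it for nonnegative measurable $f$. By monotone convergence we may approximate $f$ from below by simple functions, so it is enough to prove it for $f = \chi_E$ with $E$ a Borel subset of $\DD$. We may also discard the single point $\varphi(0)$, where $N_\varphi$ is undefined, since it has area measure zero.

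First, dispose of the degenerate case. If $\varphi$ is constant, then $\varphi' \equiv 0$, so the left side vanishes. Also $N_\varphi \equiv 0$ off the single point $\varphi(0)$, so both sides are zero. Otherwise the zero set $Z$ of $\varphi'$ is discrete in $\DD$, and $\DD\setminus Z$ is a countable union of disjoint Borel pieces $D_k$ on each of which $\varphi$ is injective. To get these pieces, cover $\DD\setminus Z$ by countably many small disks on which $\varphi$ is univalent (inverse function theorem), and make them disjoint by successive differences.

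On each piece $D_k$, apply the classical change of variables for the univalent analytic map $\varphi|_{D_k}$, whose real Jacobian is $|\varphi'|^2$:
\[
\int_{D_k} g(\varphi(z))\, |\varphi'(z)|^2 \log\frac{1}{|z|} \dd A(z) = \int_{\varphi(D_k)} g(w) \log\frac{1}{|(\varphi|_{D_k})^{-1}(w)|} \dd A(w).
\]
Sum over $k$ using monotone convergence, and ignore $Z$ since it has measure zero. This gives
\[
\int_{\DD} f(\varphi(z)) |\varphi'(z)|^2 \log\frac{1}{|z|} \dd A(z) = \int_{\DD} f(w) \sum_{z\in\varphi^{-1}(w)\setminus Z} \log\frac{1}{|z|} \dd A(w).
\]

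The remaining step, which I expect to be the main technical point, is to identify the inner sum with $N_\varphi(w)$ for a.e.\ $w$, counting multiplicity. The sum in the last display omits the critical points $z\in Z$ and counts each noncritical preimage once. A preimage has multiplicity greater than $1$ exactly when it lies in $Z$. The set $\varphi(Z)$ is countable, hence of area measure zero. So for $w\notin\varphi(Z)\cup\{\varphi(0)\}$, every preimage is simple and the sum equals $N_\varphi(w)$, which proves the identity.

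Measurability of $w\mapsto N_\varphi(w)$ follows from the same decomposition, since it is a countable sum of Borel functions. Finiteness of $N_\varphi$, for instance via Littlewood's inequality (\ref{ineq3}), is not even needed, because all quantities are nonnegative.
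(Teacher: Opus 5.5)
Your proof is correct. You split $\DD$ minus the discrete zero set $Z$ of $\varphi'$ into countably many disjoint Borel pieces on which $\varphi$ is univalent, apply the univalent change of variables with weight $\log(1/|z|)$ on each piece, sum by monotone convergence, and discard the countable (hence null) set $\varphi(Z)\cup\{\varphi(0)\}$, where multiplicities could be miscounted. The paper gives no proof of its own, only the citation \cite[Proposition 11.5]{Zhu}, and your argument is the standard proof of this formula (going back to Shapiro \cite{Sha}), so there is nothing further to compare.
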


	\subsection{The composition operator and the Toeplitz operator}\mbox{}
	
	Let $\varphi$ be an analytic self-mapping of $\DD$. Since $H^ 2$ is isometrically isomorphic to $H^ 2 (\DD)$, the composition operator $C_{\varphi}$ on $H^ 2 (\DD)$ also naturally induces an operator on $H^ 2$, which will be denoted by the same notation. As shown in \cite[Proposition 2.25]{CM},
	\begin{align*}
		C_{\varphi} f (\ee^ {\ii \theta}) = f (\varphi (\ee^ {\ii \theta})), \qquad \textrm{$m$-a.e. } \ee^ {\ii \theta} \in \TT \textrm{ for each } f \in H^ 2.
	\end{align*}
	For further discussions about composition operators on the Hardy space, we refer the reader to \cite{Sha2, CM, Zhu}. In addition, one can consult \cite{KS, SZ, IDO, Kos} for studies of composition operators acting on  spaces of analytic functions of several variables.
	
	Toeplitz operators constitute another important class of operators on function spaces. Given an essentially bounded function $u$ on $\DD$, the \textit{Toeplitz operator $T_u$ with symbol $u$} on $L_a^ 2$ is defined by
	\begin{align*}
		T_u f = P (u f), \qquad f \in L_a^ 2,
	\end{align*}
	where $P$ denotes the orthogonal projection from $L^ 2 (\DD)$ onto $L_a^ 2$. Using the reproducing kernels, we can express $T_u$ as an integral operator on $L_a^ 2$:
	\begin{align*}
		T_u f(z) = \langle u f, K_z \rangle_{L^ 2 (\DD)} = \int_{\DD} u(w) f(w) \overline{K_z (w)} \dd A(w), \qquad z \in \DD.
	\end{align*}

	There are also Toeplitz operators whose symbols are not functions but measures. For a finite positive  measure $\mu$ on $\DD$, the \textit{Toeplitz operator $T_{\mu}$ with symbol $\mu$} on the Bergman space $L_a^ 2$ is densely defined by
	\begin{align*}
		T_{\mu} f (z) = \int_{\DD} f(w) \overline{K_z (w)} \dd \mu(w), \qquad z \in \DD,
	\end{align*}
	where $f$ is a bounded analytic function on $\DD$ and $K_z$ is the reproducing kernel for $L_a^ 2$. Unlike Toeplitz operators with bounded function symbols, those with measure symbols are not always bounded. Moreover, Toeplitz operators on $L_a^ 2 (\rd \sigma)$ with bounded function symbols or measure symbols can be  defined in the same way. One can consult the book \cite{Zhu} for detailed discussions of composition operators and Toeplitz operators on the Bergman space $L_a^2$.
	
	We now describe the relationship between these two classes of operators. The argument below follows \cite{Zhu} but dispenses with the assumption that $\varphi (0) = 0$. Note that the extra condition $\varphi (0) = 0$ implies the boundedness of $N_{\varphi} (z) / \log{(1 / |z|)}$ (by Littlewood's inequality  (\ref{ineq3})), whereas our general case does not yield such a bound. Thus, the equality given in (\ref{eq5}) (see below)  requires a more detailed justification.
	
	Let $\varphi$ be an analytic self-mapping of $\DD$. For the unitary operator $U$ defined by (\ref{for1}), it is easy to calculate that
	\begin{align*}
		\bigg( U^ *
		\begin{bmatrix}
			c \\
			g
		\end{bmatrix}
		\bigg) (z)
		= \int_0^ z g (w) \dd w + c, \qquad z \in \DD,
	\end{align*}
where $c\in \mathbb C$.
	It follows that
	\begin{align} \label{eq3}
		U C_{\varphi} U^ *
		\begin{bmatrix}
			c \\
			g
		\end{bmatrix} =
		\begin{bmatrix}
			\int_0^ {\varphi (0)} g (w) \dd w + c \\
			\varphi' \cdot (g \circ \varphi)
		\end{bmatrix}
		.
	\end{align}
	We define an operator $\Lambda_{\varphi}$ by
	\begin{align*}
		\Lambda_{\varphi} g = \int_0^ {\varphi (0)} g (w) \dd w, \ \ \   g\in L_a^ 2 (\rd \sigma),
	\end{align*}
	and let $D_{\varphi}: L_a^ 2 (\rd \sigma) \longrightarrow L_a^ 2 (\rd \sigma)$ be given by
	\begin{align*}
		D_{\varphi} g = \varphi' \cdot (g \circ \varphi).
	\end{align*}
	Then we obtain by (\ref{eq3}) that
	\begin{align*}
		U C_{\varphi} U^ * =
		\begin{bmatrix}
			1 & \Lambda_{\varphi} \\
			0 & D_{\varphi}
		\end{bmatrix}
		.
	\end{align*}
	
 For any two functions $f, g \in L_a^ 2 (\rd \sigma)$,
	\begin{align*}
		\langle D_{\varphi}^ * D_{\varphi} f, g \rangle_{L^ 2 (\DD, \rd \sigma)} & = \langle D_{\varphi} f, D_{\varphi} g \rangle_{L^ 2 (\DD, \rd \sigma)} \\
		& = \int_{\DD} f(\varphi(z)) \overline{g(\varphi (z))} |\varphi' (z)|^ 2 \dd \sigma (z) \\
		& = 2 \int_{\DD} f(z) \overline{g(z)} N_{\varphi} (z) \dd A(z),
	\end{align*}
	where the last ``$=$'' is due to Theorem \ref{theorem: a change of variables}. On the other hand, if we denote
	\begin{align*}
		\Phi (z) = \frac{N_{\varphi} (z)}{\log{(1 / |z|)}},
	\end{align*}
	then
	\begin{align*}
		\langle T_{\Phi} f, g \rangle_{L^ 2 (\DD, \rd \sigma)} = \int_{\DD} \int_{\DD} \Phi (w) f(w) \frac{1 + z \overline{w}}{(1 - z \overline{w})^ 3} \dd \sigma (w) \overline{g(z)} \dd \sigma (z)
	\end{align*}
	for any two bounded analytic functions $f, g$ on $\DD$. 	Noting that if we can use Fubini's theorem to interchange the order of integration in the above expression, then
	\begin{align}
		\langle T_{\Phi} f, g \rangle_{L^ 2 (\DD, \rd \sigma)} & = \int_{\DD} \Phi (w) f (w) \int_{\DD} \overline{g(z)} \frac{1 + z \overline{w}}{(1 - z \overline{w})^ 3} \dd \sigma (z) \rd \sigma (w) \label{eq5} \\
		& = \int_{\DD} f(w) \overline{g(w)} \Phi (w) \dd \sigma (w) \notag \\
		& = \langle D_{\varphi}^ * D_{\varphi} f, g \rangle_{L^ 2 (\DD, \rd \sigma)}, \notag
	\end{align}
	which yields  that $D_{\varphi}^* D_{\varphi} = T_{\Phi}$ on the space of bounded analytic functions. Consequently, $T_{\Phi}$ is bounded on $L_a^ 2 (\rd \sigma)$ and equals  $D_{\varphi}^* D_{\varphi}$.

To verify the conditions of Fubini's theorem, first we observe that
	\begin{align}
		\int_{\DD} \int_{\DD} |\Phi (w) f(w) g(z)| \left| \frac{1 + z \overline{w}}{(1 - z \overline{w})^ 3} \right| \dd \sigma (w) \rd \sigma (z) \leqslant C \int_{\DD} \int_{\DD} \frac{N_{\varphi} (w)}{|1 - z \overline{w}|^ 3} \dd A(w) \rd \sigma (z) \label{ineq4}
	\end{align}
	holds for some constant $C > 0$, since $f$ and $g$ are bounded. For each fixed $w \in \DD$, using the identity
	\begin{align*}
		\frac{1}{(1 - z \overline{w})^ {\frac{3}{2}}} = \sum_{j = 0}^ {\infty} {\frac{\Gamma (j + \frac{3}{2})}{j! \Gamma (\frac{3}{2})} (z \overline{w})^ j},
	\end{align*}
	we have
	\begin{align}
		\int_{\DD} \frac{\rd \sigma (z)}{|1 - z \overline{w}|^ 3} & = \sum_{j, k = 0}^ {\infty} {\frac{\Gamma (j + \frac{3}{2}) \Gamma (k + \frac{3}{2})}{j! k! [\Gamma (\frac{3}{2})]^ 2} \overline{w}^j w^k \int_{\DD} z^ j \overline{z}^k \dd \sigma (z)} \notag \\
		& = \sum_{j = 0}^ {\infty} {\frac{[\Gamma (j + \frac{3}{2})]^ 2}{(j!)^ 2 [\Gamma (\frac{3}{2})]^ 2} |w|^ {2 j} \int_{\DD} |z|^ {2 j} \dd \sigma (z)} \notag \\
		& \leqslant C' \log{\frac{1}{1 - |w|^ 2}}, \label{ineq10}
	\end{align}
	where $C' > 0$ is a constant. It follows that
	\begin{align*}
		\int_{\DD} \int_{\DD} |\Phi (w) f(w) g(z)| \left| \frac{1 + z \overline{w}}{(1 - z \overline{w})^ 3} \right| \dd \sigma (w) \rd \sigma (z) & \leqslant C C' \int_{\DD} N_{\varphi} (w) \log{\frac{1}{1 - |w|^ 2}} \dd A(w).
	\end{align*}
	By (\ref{ineq3}), the function $N_{\varphi} (w) \log\big({\frac{1}{1 - |w|^ 2}}\big)$ is integrable over $\DD$ with respect to $\rd A$, which implies that the integral on the left hand side of (\ref{ineq4}) is finite. We summarize the preceding discussions in the following lemma.
	
	\begin{lem} \label{lemma: Cphi* Cphi is a Toeplitz opeartor}
		Suppose $\varphi$ is an analytic self-mapping of $\DD$. Let
		\begin{align*}
			\Phi (z) = \frac{N_{\varphi} (z)}{\log{(1 / |z|)}},
		\end{align*}
		and let $D_{\varphi}$ be the bounded operator on $L_a^ 2 (\rd \sigma)$ defined by $D_{\varphi} f = \varphi' \cdot (f \circ \varphi)$.
		Then the Toeplitz operator $T_{\Phi}$ on $L_a^ 2 (\rd \sigma)$ is bounded and equals $D_{\varphi}^ * D_{\varphi}$. Moreover, we have
		\begin{align*}
			\langle T_{\Phi} f, g \rangle_{L^ 2 (\DD, \rd \sigma)} = \langle D_{\varphi}^ * D_{\varphi} f, g \rangle_{L^ 2 (\DD, \rd \sigma)} = 2  \int_{\DD} f(z) \overline{g(z)} N_{\varphi} (z) \dd A(z),
		\end{align*}
		where $f, g \in L_a^ 2 (\rd \sigma)$.
	\end{lem}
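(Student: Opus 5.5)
The lemma is essentially a summary of the preceding discussion, so the proof will tie those computations together in the right order. I will first check that $D_{\varphi}$ is bounded on $L_a^2(\rd\sigma)$. By the block decomposition
\begin{align*}
U C_{\varphi} U^* = \begin{bmatrix} 1 & \Lambda_{\varphi} \\ 0 & D_{\varphi} \end{bmatrix},
\end{align*}
$D_{\varphi}$ is the compression of the bounded operator $U C_{\varphi} U^*$ to the second summand. Since $U$ is unitary by Theorem \ref{theorem: Littlewood-Paley identity} and $C_{\varphi}$ is bounded on $H^2$ by Littlewood's subordination theorem, $D_{\varphi}$ is bounded. Next, for arbitrary $f,g\in L_a^2(\rd\sigma)$, I will apply the change of variables formula (Theorem \ref{theorem: a change of variables}) with $\log(1/|z|^2)=2\log(1/|z|)$. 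To handle the complex-valued integrand $f\bar g$, I will use polarization, or split into real and imaginary positive and negative parts. Absolute integrability comes from the nonnegative case applied to $|f|^2$ and $|g|^2$ together with Cauchy--Schwarz. This yields
\begin{align*}
\langle D_{\varphi}^*D_{\varphi}f,g\rangle = 2\int_{\DD} f\bar g\, N_{\varphi}\dd A
\end{align*}
for all $f,g$, not only bounded ones.

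Second, I will identify $T_{\Phi}$ with $D_{\varphi}^*D_{\varphi}$ on the dense subspace $H^\infty$ of bounded analytic functions. For $f,g\in H^\infty$, I will write out $\langle T_{\Phi}f,g\rangle$ as the iterated integral against the kernel $\frac{1+z\bar w}{(1-z\bar w)^3}$ of $L_a^2(\rd\sigma)$. Fubini is justified by estimate (\ref{ineq4}) combined with (\ref{ineq10}), which bound the absolute double integral by
\begin{align*}
C\int_{\DD} N_{\varphi}(w)\log\frac{1}{1-|w|^2}\dd A(w).
\end{align*}
This integral is finite by Littlewood's inequality (\ref{ineq3}): $N_{\varphi}$ has only a logarithmic singularity at $\varphi(0)$ and decays like $1-|w|$ near $\TT$. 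After swapping the integrals, the inner integral is $\overline{g(w)}$ by the reproducing property, since $g\in H^\infty\subset L_a^2(\rd\sigma)$. This gives $\int f\bar g\,\Phi\dd\sigma=2\int f\bar g N_{\varphi}\dd A$.

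Finally, $H^\infty$ (indeed the polynomials) is dense in $L_a^2(\rd\sigma)$. The densely defined $T_{\Phi}$ agrees there with the bounded operator $D_{\varphi}^*D_{\varphi}$, so $T_{\Phi}$ extends boundedly and equals $D_{\varphi}^*D_{\varphi}$, and the displayed identity holds for all $f,g$ by the first step.

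The main obstacle is the Fubini justification, specifically bound (\ref{ineq10}). I will verify it using $\int_{\DD}|z|^{2j}\dd\sigma\asymp (j+1)^{-2}$ and $\Gamma(j+\tfrac32)/j!\asymp (j+1)^{1/2}$. The series is then comparable to $\sum_j |w|^{2j}/(j+1)$, which is $\log\frac{1}{1-|w|^2}$ up to constants.
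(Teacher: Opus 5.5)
Your proposal is correct and follows the paper's argument: the change-of-variables identity for $\langle D_\varphi^*D_\varphi f,g\rangle$, then the Fubini computation for $\langle T_\Phi f,g\rangle$ on bounded analytic functions, justified by (\ref{ineq4}), (\ref{ineq10}) and Littlewood's inequality, and then density. Your extra care (boundedness of $D_\varphi$ from the block form, and polarization for the complex-valued integrand) only fills in details the paper leaves implicit. One small correction, which the paper shares: at $w=0$ the series equals $1$, so the bound in (\ref{ineq10}) should read $C'\bigl(1+\log\frac{1}{1-|w|^2}\bigr)$ rather than $C'\log\frac{1}{1-|w|^2}$; this is harmless for the Fubini step because $N_\varphi\in L^1(\rd A)$.
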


	An analogous result holds  in the setting of the  Bergman  space $L_a^2$.
	
	\begin{lem}[{\cite[Section 11.3]{Zhu}}] \label{lemma: Cphi* Cphi is a Toeplitz opeartor, Bergman space}
		Suppose $\varphi$ is an analytic self-mapping of $\DD$, and $\rd \mu_{\varphi} = \rd A \circ \varphi^ {-1}$ is the pullback measure of $\rd A$ under the mapping $\varphi$. Then we have $C_{\varphi}^ * C_{\varphi} = T_{\mu_{\varphi}}$  and
		\begin{align*}
			\langle T_{\mu_{\varphi}} f, g \rangle_{L^ 2 (\DD)} = \langle C_{\varphi}^ * C_{\varphi} f, g \rangle_{L^ 2 (\DD)} = \int_{\DD} f(z) \overline{g(z)} \dd \mu_{\varphi} (z), \qquad f, g \in L_a^ 2.
		\end{align*}
	\end{lem}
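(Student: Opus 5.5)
The plan is to check the identity on a dense class first and then pass to all of $L_a^ 2$ by boundedness. Both $C_{\varphi}^ * C_{\varphi}$ and the sesquilinear form $(f, g) \mapsto \int_{\DD} f \overline{g} \dd \mu_{\varphi}$ will turn out to be bounded on $L_a^ 2$. The starting point is the definition of the pullback measure: $\mu_{\varphi}(E) = A(\varphi^ {-1}(E))$ for Borel $E \subset \DD$. The abstract change of variables then gives
\begin{align*}
\int_{\DD} h \dd \mu_{\varphi} = \int_{\DD} h \circ \varphi \dd A
\end{align*}
for every nonnegative Borel function $h$, and by linearity for every $h \in L^ 1 (\mu_{\varphi})$.

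Next I apply this with $h = f \overline{g}$ for $f, g \in L_a^ 2$. By Littlewood's subordination theorem, $C_{\varphi}$ is bounded on $L_a^ 2$. Hence $|f|^ 2 \circ \varphi$ is integrable, which shows that $\int |f|^ 2 \dd \mu_{\varphi} = \| C_{\varphi} f \|^ 2 \leqslant \| C_{\varphi} \|^ 2 \| f \|^ 2$. In particular $\mu_{\varphi}$ is a Carleson measure for $L_a^ 2$. By Cauchy--Schwarz, $f \overline{g} \in L^ 1 (\mu_{\varphi})$, and
\begin{align*}
\langle C_{\varphi}^ * C_{\varphi} f, g \rangle_{L^ 2 (\DD)} = \int_{\DD} (f \circ \varphi) \overline{(g \circ \varphi)} \dd A = \int_{\DD} f \overline{g} \dd \mu_{\varphi},
\end{align*}
which is the stated formula.

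It remains to identify this operator with the Toeplitz operator $T_{\mu_{\varphi}}$, which is defined on bounded analytic $f$ by $T_{\mu_{\varphi}} f (z) = \int f(w) \overline{K_z (w)} \dd \mu_{\varphi} (w)$. For bounded $f$ and fixed $z$, the kernel $K_z$ is bounded on $\DD$ and $\mu_{\varphi}$ is finite, so the integral converges absolutely. Applying the previous display with $g = K_z$ gives
\begin{align*}
T_{\mu_{\varphi}} f (z) = \overline{\langle K_z, f \rangle_{\mu_{\varphi}}} = \langle C_{\varphi}^ * C_{\varphi} f, K_z \rangle_{L^ 2 (\DD)} = (C_{\varphi}^ * C_{\varphi} f)(z)
\end{align*}
by the reproducing property. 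Thus $T_{\mu_{\varphi}} = C_{\varphi}^ * C_{\varphi}$ on $H^ \infty$. Since $H^ \infty$ is dense in $L_a^ 2$ and the right-hand side is bounded, $T_{\mu_{\varphi}}$ extends boundedly and the two operators agree on all of $L_a^ 2$.

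Unlike the Hardy-space Lemma \ref{lemma: Cphi* Cphi is a Toeplitz opeartor}, there is no delicate Fubini step here, because the Bergman kernel $K_z$ is bounded for each fixed $z$. The only point needing care is integrability of $f \overline{g}$ with respect to $\mu_{\varphi}$ for general $f, g \in L_a^ 2$, and that follows from the boundedness of $C_{\varphi}$ as shown above.
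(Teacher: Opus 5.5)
Your proof is correct and is essentially the standard argument behind the citation to Zhu; the paper gives no proof of its own here. You push $\rd A$ forward under $\varphi$, use boundedness of $C_\varphi$ to get $f\overline{g}\in L^1(\mu_\varphi)$ and the sesquilinear identity, identify $T_{\mu_\varphi}f(z)$ with $(C_\varphi^*C_\varphi f)(z)$ by pairing with $K_z$, and extend from the dense set of bounded analytic functions.

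One small correction to your closing remark: the same pointwise pairing would also remove the Fubini step in the paper's Hardy-space analogue. The kernel of $L_a^2(\rd\sigma)$ is bounded for fixed $z$ as well, and $\Phi\,\rd\sigma = 2N_\varphi\,\rd A$ is a finite measure. So that Fubini argument reflects the route the paper chose, not a genuine obstacle in the Hardy case.
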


	\subsection{Aleksandrov--Clark measures} \mbox{}
	
	Recall that $P_z (\ee^ {\ii \theta})$ stands for the Poisson kernel at $z \in \DD$. For an analytic self-mapping $\varphi$ of $\DD$, $P_{\varphi (z)} (\ee^ {\ii \theta})$ is a positive harmonic function on $\DD$ for each fixed $\ee^ {\ii \theta} \in \TT$, since $P_z (\ee^ {\ii \theta})$ is positive and harmonic in $z$ on $\DD$. It follows from Herglotz's theorem (see \cite[Theorem 4.12]{Ros}) that
	\begin{align*}
		P_{\varphi (z)} (\alpha) = \int_{\TT} P_z (\ee^ {\ii t}) \dd \mu_{\alpha} (\ee^ {\ii t}), \qquad z \in \DD,
	\end{align*}
	where $\rd \mu_{\alpha}$ is a finite positive Borel measure on $\TT$ depending on $\alpha \in \TT$. Measures in the family $\left\{ \rd \mu_{\alpha} : \alpha \in \TT \right\}$ are called the \textit{Aleksandrov--Clark measures} associated with $\varphi$. Since
	\begin{align*}
		\mu_{\alpha} (\TT) = P_{\varphi (0)} (\alpha) = \frac{1 - |\varphi (0)|^ 2}{|\alpha - \varphi (0)|^ 2} \leqslant \frac{1 + |\varphi (0)|}{1 - |\varphi (0)|},
	\end{align*}
	the family $\{ \rd \mu_{\alpha}: \alpha \in \TT \}$  is bounded with respect to the total variation norm.
	A particularly interesting aspect of Aleksandrov--Clark measures is captured in the following theorem.
	
	\begin{thm}[Aleksandrov's disintegration theorem] \label{theorem: Aleksandrov's disintegration theorem}
		Suppose $\varphi$ is an analytic self-mapping of $\DD$, and $\{ \rd \mu_{\alpha} : \alpha \in \TT \}$ is the family of the Aleksandrov--Clark measures associated with $\varphi$. For each $f \in L^ 1 (\TT)$, we have:
		\begin{enumerate}[label = \textup{(\alph*)}]
			\item $f \in L^ 1 (\TT, \rd \mu_{\alpha})$ for $m$-a.e. $\alpha \in \TT$,
			\item $\int_{\TT} f \dd \mu_{\alpha} \in L^ 1 (\TT, \rd m(\alpha))$, and
			\item the identity
			\begin{align*}
				\int_{\TT} \int_{\TT} f (\ee^ {\ii t}) \dd \mu_{\alpha} (\ee^ {\ii t}) \dd m(\alpha) = \int_{\TT} f (\ee^ {\ii t}) \dd m (\ee^ {\ii t})
			\end{align*}
			holds.
		\end{enumerate}
	\end{thm}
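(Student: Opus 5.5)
The proof of Aleksandrov's disintegration theorem will proceed by a density argument. The identity is first established for trigonometric polynomials, or more precisely for Poisson kernels. It is then extended to continuous functions, and finally to $L^1(\TT)$ via monotone class and Fatou-type arguments.

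\textbf{Continuous functions.} For $f = P_z$ with fixed $z \in \DD$, the defining relation gives $\int_{\TT} P_z \dd \mu_\alpha = P_{\varphi(z)}(\alpha)$. Integrating in $\alpha$ against $\rd m$ yields $\int_{\TT} P_{\varphi(z)}(\alpha) \dd m(\alpha) = 1 = \int_{\TT} P_z \dd m$, so (c) holds for every Poisson kernel. To pass to $f \in C(\TT)$, I will use that the linear span of $\{P_z : z \in \DD\}$ is uniformly dense in $C(\TT)$. Indeed, expanding $P_z$ in $z$ shows that the span recovers all trigonometric polynomials, for instance through $\partial^n_z P_z|_{z=0}$ approximated by difference quotients, which converge uniformly in $\ee^{\ii t}$. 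Since $\mu_\alpha(\TT)$ is bounded uniformly in $\alpha$, both sides of (c) are continuous linear functionals of $f$ in the sup norm, so (c) holds on $C(\TT)$. We also need measurability of $\alpha \mapsto \int f \dd\mu_\alpha$. For $f = P_z$ this map is $P_{\varphi(z)}(\alpha)$, which is continuous in $\alpha$, and uniform limits of such maps preserve measurability.

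\textbf{Nonnegative Borel functions.} Next, (c) is extended to all nonnegative Borel $f$, as an identity in $[0,\infty]$. First, for an open set $G \subset \TT$, write $\chi_G$ as an increasing limit of continuous functions and apply monotone convergence on both sides. A $\pi$-$\lambda$ (monotone class) argument then gives the identity for all Borel sets, and simple functions plus monotone convergence give it for all nonnegative Borel $f$.

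\textbf{Null sets and $L^1$.} The main subtlety is that $f \in L^1(\TT)$ is only an equivalence class. We therefore need, for an $m$-null Borel set $E$, that $\mu_\alpha(E) = 0$ for $m$-a.e.\ $\alpha$. This follows from the nonnegative identity: $\int_{\TT} \mu_\alpha(E) \dd m(\alpha) = m(E) = 0$. Consequently $\int f \dd \mu_\alpha$ is well-defined for a.e.\ $\alpha$, independent of the Borel representative. Applying the nonnegative identity to $|f|$ gives $\int\!\int |f| \dd\mu_\alpha \dd m(\alpha) = \|f\|_{L^1} < \infty$, which yields (a) and (b). Finally, (c) for general $f$ follows by splitting $f$ into the positive and negative parts of its real and imaginary parts and using linearity.

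I expect the density step for $C(\TT)$ to be the only point requiring care, together with the null-set compatibility that makes the statement meaningful for $L^1$ classes. The rest is routine measure theory.
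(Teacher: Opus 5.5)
Your proposal is correct: the Poisson-kernel computation, the density of $\spa\{P_z : z \in \DD\}$ in $C(\TT)$, the Dynkin-class extension to Borel sets (with measurability of $\alpha \mapsto \mu_\alpha(E)$ carried along), and the null-set observation $\int_{\TT} \mu_\alpha(E) \dd m(\alpha) = m(E) = 0$ together give (a)--(c). The paper gives no proof of its own and defers to \cite[Theorem 6.1]{Ros}; the standard proof of this theorem follows the same scheme you use (continuous functions first, then a monotone class argument, then passage to $L^1$ classes via null sets).
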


	\begin{proof}
		See \cite[Theorem 6.1]{Ros} and the remarks thereafter.
	\end{proof}
	
	Furthermore, there is a also deep connection between Aleksandrov--Clark measures and composition operators:
	
	\begin{thm}[{\cite[Theorem 5.1]{Sak}}] \label{theorem: Aleksandrov operators}
		Suppose $\varphi$ is an analytic self-mapping of $\DD$. Let $\{ \rd \mu_{\alpha} : \alpha \in \TT \}$ be the family of the Aleksandrov--Clark measures associated with $\varphi$. Then we have
		\begin{align*}
			C_{\varphi}^ * f (\alpha) = \int_{\TT} f (\ee^ {\ii t}) \dd \mu_{\alpha} (\ee^ {\ii t}), \qquad \textrm{$m$-a.e. } \alpha \in \TT,
		\end{align*}
		where $C_{\varphi}$ is the Hardy--composition operator and $f \in H^ 2$.
	\end{thm}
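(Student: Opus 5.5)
The plan is to introduce the \emph{Aleksandrov operator}
\begin{align*}
	(A_{\varphi} f)(\alpha) = \int_{\TT} f(\ee^ {\ii t}) \dd \mu_{\alpha}(\ee^ {\ii t}),
\end{align*}
show that it is bounded on all of $L^ 2(\TT)$, identify its action against test functions, and then compare it with $C_{\varphi}^ *$ on $H^ 2$.

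\emph{Boundedness.} For $f \in L^ 2(\TT)$, Cauchy--Schwarz with respect to the positive measure $\mu_{\alpha}$ gives
\begin{align*}
	|(A_{\varphi} f)(\alpha)|^ 2 \leqslant \mu_{\alpha}(\TT) \int_{\TT} |f|^ 2 \dd \mu_{\alpha} \leqslant \frac{1 + |\varphi(0)|}{1 - |\varphi(0)|} \int_{\TT} |f|^ 2 \dd \mu_{\alpha}.
\end{align*}
Integrating in $\alpha$ and applying Aleksandrov's disintegration theorem (Theorem \ref{theorem: Aleksandrov's disintegration theorem}) to $|f|^ 2 \in L^ 1(\TT)$ shows that $A_{\varphi}$ is well defined $m$-a.e. and bounded on $L^ 2(\TT)$. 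Positivity of the $\mu_{\alpha}$ also gives $A_{\varphi} \overline{f} = \overline{A_{\varphi} f}$.

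\emph{Duality identity.} Next I would prove $\langle A_{\varphi} f, g \rangle_2 = \langle f, g \circ \varphi \rangle_2$, where $g \circ \varphi$ means the boundary values of (Poisson extension of $g$)$\circ \varphi$. I would test this on Poisson kernels $f = P_z$ and $g = P_w$. By the defining Herglotz relation, $A_{\varphi} P_z = P_{\varphi(z)}$, so the left side is the Poisson integral of $P_{\varphi(z)}$ evaluated at $w$. The right side is $(P_w \circ \varphi)(z)$, because $P_w \circ \varphi$ is a bounded harmonic function whose boundary values reproduce it. Both sides equal the symmetric closed form
\begin{align*}
	\frac{1 - |a|^ 2 |b|^ 2}{|1 - \overline{a} b|^ 2}, \qquad a = \varphi(z),\ b = w.
\end{align*}
Since the linear span of $\{ P_z : z \in \DD \}$ is dense in $L^ 2(\TT)$ and both sides are bounded sesquilinear forms, the identity extends to all $f, g \in L^ 2(\TT)$. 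Taking $f, g \in H^ 2$ then gives $P_{+} A_{\varphi} f = C_{\varphi}^ * f$, where $P_{+}$ denotes the Riesz projection of $L^ 2(\TT)$ onto $H^ 2$.

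\emph{Main obstacle: showing $A_{\varphi} f \in H^ 2$.} It remains to show that $A_{\varphi} f$ has no negative Fourier coefficients, so that $P_{+}$ can be dropped. The duality identity gives, for $n \geqslant 1$,
\begin{align*}
	\langle A_{\varphi} f, \overline{\ee^ {\ii n t}} \rangle_2 = \int_{\TT} f \varphi^ n \dd m = f(0) \varphi(0)^ n.
\end{align*}
This vanishes exactly when $\varphi(0) = 0$ (or $f(0) = 0$). Testing $f = 1$ confirms the difficulty: $A_{\varphi} 1 = P_{\varphi(0)}$, whereas $C_{\varphi}^ * 1 = K_{\varphi(0)}$. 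So the argument proves the formula as stated under the normalization $\varphi(0) = 0$, which is the setting of Sakai's theorem; I would make this hypothesis explicit. For general $\varphi$, the correct statement the same proof yields is $C_{\varphi}^ * f = P_{+}\big( \int_{\TT} f \dd \mu_{\alpha} \big)$. Alternatively, one can reduce to the normalized case by writing $\varphi = \tau \circ \varphi_0$, where $\tau$ is a disk automorphism and $\varphi_0(0) = 0$, and handling $C_{\tau}^ *$ separately.
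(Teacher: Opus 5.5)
Your argument is correct, and it shows more than you were asked: the statement as displayed is false unless $\varphi(0)=0$, and your counterexample is right. Indeed $\int_{\TT} 1 \dd\mu_\alpha = \mu_\alpha(\TT) = P_{\varphi(0)}(\alpha)$, and on $\TT$ one has $P_{\varphi(0)} = K_{\varphi(0)} + \overline{K_{\varphi(0)}} - 1$. This is real-valued and nonconstant when $\varphi(0)\neq 0$, so it is not in $H^2$, while $C_\varphi^* 1 = K_{\varphi(0)}$. Its Riesz projection is exactly $K_{\varphi(0)}$, which agrees with your corrected formula $C_\varphi^* f = P_+\bigl(\int_{\TT} f \dd\mu_\alpha\bigr)$.

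The paper gives no proof, only the citation \cite[Theorem 5.1]{Sak}, and as stated it omits the normalization $\varphi(0)=0$ that identifying $C_\varphi^*$ with the Aleksandrov operator on $H^2$ requires. Your route is a standard self-contained proof that $A_\varphi$ is the $L^2(\TT)$-adjoint of $g\mapsto \mathcal{P}[g]\circ\varphi$. It has three steps: boundedness of $A_\varphi$ on $L^2(\TT)$ via Cauchy--Schwarz and the disintegration theorem; the Poisson-kernel computation giving $(1-|a|^2|b|^2)/|1-\overline{a}b|^2$ on both sides; and density.

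The one step you assert without justification is that this harmonic composition operator is bounded on $L^2(\TT)$, which you need to extend the identity in $g$. It follows from $|\mathcal{P}[g]|^2 \leqslant \mathcal{P}[|g|^2]$ and the mean value property of the harmonic function $\mathcal{P}[|g|^2]\circ\varphi$. Together these give $\|\mathcal{P}[g]\circ\varphi\|_2^2 \leqslant \frac{1+|\varphi(0)|}{1-|\varphi(0)|}\|g\|_2^2$.

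This matters later in the paper. In the proof of Theorem \ref{theorem: characterization of CB*Cpsi in S2}, the formula $C_B^*C_\varphi(z^j)(\alpha)=\int_{\TT}\varphi^j \dd\mu_\alpha$ is used for an arbitrary finite Blaschke product $B$. When $B(0)\neq 0$ one only has $C_B^*C_\varphi(z^j) = P_+\bigl(\int_{\TT}\varphi^j\dd\mu_\alpha\bigr)$. That gives only an upper bound for $\|C_B^*C_\varphi z^j\|_2$, the wrong direction for the lower estimate the proof needs.

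The reduction you mention repairs this. Write $B=\tau\circ B_0$ with $\tau$ a disk automorphism and $B_0(0)=0$, so $B_0$ is again a finite Blaschke product. Then $C_B^* = C_\tau^* C_{B_0}^*$ with $C_\tau^*$ invertible, so $C_B^*C_\varphi\in S_2(H^2)$ if and only if $C_{B_0}^*C_\varphi\in S_2(H^2)$. The paper's argument then applies verbatim to $B_0$.
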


	For a detailed discussion of Aleksandrov--Clark measures, we refer the reader to \cite{Sak, Ros} and the references therein.
	
	\subsection{The Schatten classes} \mbox{}

For the general theory of the Schatten classes, the reader is referred to \cite{Rin}. In this subsection, we only review the essentials necessary for our subsequent analysis. Let $\mathcal{H}$ be a separable Hilbert space of infinite dimension and $\mathfrak{L} (\mathcal{H})$ the space of bounded linear operators on $\mathcal{H}$. For each $1 \leqslant p < \infty$, the Schatten $p$-class $S_p = S_p (\mathcal{H})$ is the set of all $A \in \mathfrak{L}(\mathcal{H})$ satisfying
	\begin{align*}
		\sum_{j = 1}^ {\infty} {|\langle A e_j, e_j \rangle|^ p} < +\infty
	\end{align*}
for any orthonormal basis   $\{ e_j \}_{j = 1}^ {\infty}$ of $\mathcal{H}$. $S_1$ is usually called the \textit{trace class}, and $S_2$ is usually called the \textit{Hilbert--Schmidt class}.

	It can be shown that $S_p$ is a two-sided $*$-ideal of $\mathfrak{L} (\mathcal{H})$, each finite rank operator is in $S_p$, and each $A \in S_p$ is compact. An alternative definition of $S_p$ is: for each $1 \leqslant p < \infty$, $A \in \mathfrak{L} (\mathcal{H})$ is in $S_p$ if and only if $A$ is compact and
	\begin{align}
		\| A \|_{S_p}^ p := \sum_{j \geqslant 1} {\lambda_j^ p} < +\infty, \label{eq1}
	\end{align}
	where $\{ \lambda_j \}_{j \geqslant 1}$ is the sequence of singular values of $A$. Incidentally, (\ref{eq1}) is also used to define the Schatten $p$-class $S_p$ for every $0 < p < 1$. As a consequence of (\ref{eq1}), we immediately find that $A \in S_p$ if and only if $|A| \in S_p$, since the sequences of singular values of $A$ and $|A|$ are the same.
	
	Besides (\ref{eq1}), we can also express $\| \cdot \|_{S_p}$ in terms of the trace, which will be introduced below.  For $A \in S_1$, the \textit{trace} of $A$ is defined by
	\begin{align}
		\tr{(A)} = \sum_{j = 1}^ {\infty} {\langle A e_j, e_j \rangle}, \label{eq2}
	\end{align}
	where $\{ e_j \}_{j = 1}^ {\infty}$ is an orthonormal basis  for $\mathcal{H}$. It can be shown that the quantity on the right hand side of (\ref{eq2}) is independent of the choice of orthonormal bases, and thus $\tr{(A)}$ is well-defined. Observe that
	\begin{align*}
		\| A \|_{S_1} = \tr(|A|).
	\end{align*}
	Furthermore, if $A \in S_p$, then $|A|^ p \in S_1$ and
	\begin{align*}
		\| A \|_{S_p} = \left[ \tr(|A|^ p) \right]^ {\frac{1}{p}},
	\end{align*}
	where $0 < p < \infty$.

	The following theorem is a useful characterization of $S_p$.
	
	\begin{thm}[{\cite[Theorem 2.1.3]{Rin}}] \label{theorem: characterization of Sp}
		Suppose $\mathcal{H}$ is a separable Hilbert space and $A$ is a bounded linear operator on $\mathcal{H}$. Let $1 \leqslant  p < \infty$. Then $A \in S_p$ if and only if there is a sequence $\{ F_n \}_{n = 1}^ {\infty}$ in $\mathfrak{L} (\mathcal{H})$, such that $F_n$ has rank not greater than $n$ and
		\begin{align*}
			\sum_{n = 1}^ {\infty} {\| A - F_n \|^ p} < +\infty.
		\end{align*}
	\end{thm}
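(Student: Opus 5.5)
The plan is to use the singular values $s_1(A)\geqslant s_2(A)\geqslant\cdots$ of a compact operator and to show that they coincide with the \emph{approximation numbers}
\begin{align*}
	a_n(A)=\inf\{\|A-F\| : F\in\mathfrak{L}(\mathcal{H}),\ \rank F\leqslant n\},\qquad n\geqslant 0.
\end{align*}
Specifically, I will show that $a_n(A)=s_{n+1}(A)$ and that the infimum is attained. Once this is established, both directions of the theorem reduce to the definition (\ref{eq1}) of $S_p$.

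First, the identification $a_n(A)=s_{n+1}(A)$ for compact $A$. Write the Schmidt expansion
\begin{align*}
	A=\sum_{j\geqslant1}s_j\langle\cdot,e_j\rangle f_j,
\end{align*}
where $\{e_j\}$ and $\{f_j\}$ are orthonormal systems; this comes from the spectral theorem applied to the compact positive operator $|A|$ together with the polar decomposition. For the upper bound, take $F_n=\sum_{j\leqslant n}s_j\langle\cdot,e_j\rangle f_j$. This has rank at most $n$, and $\|A-F_n\|=\sup_{j>n}s_j=s_{n+1}$.

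For the lower bound, let $F$ have rank at most $n$. Then $\ker F$ has codimension at most $n$, so it meets the $(n+1)$-dimensional space $\spa\{e_1,\dots,e_{n+1}\}$ in a unit vector $x=\sum_{j\leqslant n+1}c_je_j$. For this $x$,
\begin{align*}
	\|(A-F)x\|^2=\|Ax\|^2=\sum_{j\leqslant n+1}s_j^2|c_j|^2\geqslant s_{n+1}^2.
\end{align*}
Hence $\|A-F\|\geqslant s_{n+1}$. This dimension-counting step is the only real content and the point where care is needed: one must verify that $\ker F$ has codimension at most $n$, which follows because $F$ induces an injection from $\mathcal{H}/\ker F$ into $\operatorname{ran}F$.

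Necessity. If $A\in S_p$, then $A$ is compact. Taking the $F_n$ above gives
\begin{align*}
	\sum_n\|A-F_n\|^p=\sum_n s_{n+1}^p\leqslant\|A\|_{S_p}^p<\infty.
\end{align*}

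Sufficiency. Suppose rank-$\leqslant n$ operators $F_n$ with $\sum_n\|A-F_n\|^p<\infty$ are given. Then $\|A-F_n\|\to0$, so $A$ is a norm limit of finite-rank operators and is therefore compact, and its singular values are defined. By the lower bound above, $s_{n+1}(A)\leqslant\|A-F_n\|$ for every $n\geqslant1$. Hence
\begin{align*}
	\sum_{j\geqslant2}s_j^p\leqslant\sum_{n\geqslant1}\|A-F_n\|^p<\infty,
\end{align*}
and adding the single finite term $s_1^p=\|A\|^p$ yields $A\in S_p$ by (\ref{eq1}). The assumption $p\geqslant1$ is not actually used here, so the argument works for all $0<p<\infty$.
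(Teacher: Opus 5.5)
Your argument is correct. You show that $\inf\{\|A-F\| : \rank F\leqslant n\}=s_{n+1}(A)$, using the truncated Schmidt expansion for the upper bound and the fact that $\ker F$ meets $\spa\{e_1,\dots,e_{n+1}\}$ nontrivially for the lower bound. This is the standard proof, and it is essentially how the result is obtained in Ringrose's book. The paper gives no proof of its own and simply cites Ringrose.

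One small detail: when $A$ has fewer than $n+1$ nonzero singular values, $e_{n+1}$ may not exist, but then $s_{n+1}=0$ and the lower bound is trivial. Your remark that $p\geqslant 1$ is never used is also right, since (\ref{eq1}) defines $S_p$ for all $0<p<\infty$.
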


	In fact, the condition $\rank{(F_n)} \leqslant n$  in the above theorem can be relaxed to
	\begin{align}
		\rank{(F_n)} \leqslant k n, \qquad n = 1, 2, \cdots, \label{ineq9}
	\end{align}
	where $k$ is some fixed positive integer.  Assuming that (\ref{ineq9}) holds and
	\begin{align*}
		\sum_{n = 1}^ {\infty} {\| A - F_n \|^ p} < +\infty,
	\end{align*}
	then we set
	\begin{align*}
		\widetilde{F}_n =
		\begin{cases}
			0, & 1 \leqslant  n \leqslant k - 1, \vspace{2mm}\\
			F_m, & m k \leqslant n \leqslant (m + 1) k - 1.
		\end{cases}
	\end{align*}
	It is easy to see that $\rank({\widetilde{F}_n})\leqslant n$ and
	\begin{align*}
		\sum_{n = 1}^ {\infty} {\big\| A - \widetilde{F}_n \big\|^ p} = (k - 1)  \| A \|^ p + k \sum_{n = 1}^ {\infty} {\| A - F_n \|^ p}< +\infty,
	\end{align*}
	to obtain that $A$ is in $S_p$.

	When $p = 2$, we have the following convenient characterization of $S_2$.
	
	\begin{thm}[{\cite[Theorem 2.4.3]{Rin}}] \label{theorem: characterization of S2}
		Suppose that $\mathcal{H}$ is a separable Hilbert space. Then $A \in S_2$ if and only if
		\begin{align*}
			\sum_{j = 1}^ {\infty} {\| A e_j \|^ 2} = \sum_{j, k = 1}^ {\infty} {|\langle A e_j, e_k \rangle|^ 2} < +\infty,
		\end{align*}
		where $\{ e_j \}_{j = 1}^ {\infty}$ is some orthonormal basis for $\mathcal{H}$. In this case, the sums above are precisely $\| A \|_{S_2}^ 2$.
	\end{thm}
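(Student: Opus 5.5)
The plan is to establish three facts in turn: the two double sums agree, the common value does not depend on the orthonormal basis, and for a compact operator this value equals $\sum_n \lambda_n^2$.

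\emph{The two sums agree.} For each fixed $j$, Parseval's identity in the basis $\{e_k\}$ gives $\|Ae_j\|^2=\sum_k |\langle Ae_j,e_k\rangle|^2$. Summing over $j$ yields the stated equality of the single and double sums. All terms are nonnegative, so the order of summation is irrelevant (Tonelli for counting measure).

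\emph{Independence of the basis.} Let $\{f_k\}$ be any second orthonormal basis. Using Parseval twice and nonnegativity to interchange sums,
\begin{align*}
\sum_j \|Ae_j\|^2=\sum_{j,k}|\langle Ae_j,f_k\rangle|^2=\sum_{k,j}|\langle e_j,A^*f_k\rangle|^2=\sum_k\|A^*f_k\|^2 .
\end{align*}
The right-hand side does not involve $\{e_j\}$. Applying the same identity with $\{f_k\}$ in place of $\{e_j\}$ shows $\sum_j\|Ae_j\|^2=\sum_j\|Af_j\|^2$, with the value $+\infty$ allowed. So the quantity, finite or not, is basis independent.

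\emph{Finiteness implies compactness.} Suppose the sum is finite for some basis. Let $P_N$ be the orthogonal projection onto $\spa\{e_1,\dots,e_N\}$. For $x\in\mathcal H$, Cauchy--Schwarz gives
\begin{align*}
\|A(I-P_N)x\|=\Big\|\sum_{j>N}\langle x,e_j\rangle Ae_j\Big\|\leqslant \Big(\sum_{j>N}\|Ae_j\|^2\Big)^{1/2}\|x\| .
\end{align*}
Hence the finite-rank operators $AP_N$ converge to $A$ in norm, and $A$ is compact.

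\emph{Compact case: the value is $\|A\|_{S_2}^2$.} For compact $A$, take the singular value decomposition $A=\sum_n \lambda_n\langle\cdot,u_n\rangle v_n$, where $\{u_n\}$ and $\{v_n\}$ are orthonormal and $\lambda_n>0$. Extend $\{u_n\}$ by an orthonormal basis of $\ker A=\{u_n\}^\perp$; separability of $\mathcal H$ makes the result a countable orthonormal basis. In this basis $\|Au_n\|^2=\lambda_n^2$, and the added vectors contribute $0$. So the sum equals $\sum_n\lambda_n^2=\|A\|_{S_2}^2$ by (\ref{eq1}).

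\emph{Conclusion.} If $A\in S_2$, then $A$ is compact and the last step shows the sum is finite and equals $\|A\|_{S_2}^2$ in every basis. Conversely, if the sum is finite in some basis, then $A$ is compact, and by basis independence it equals $\sum_n\lambda_n^2<\infty$, so $A\in S_2$.

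The only delicate points are the interchanges of infinite sums, which are justified by nonnegativity, and the completion of $\{u_n\}$ to a full basis. I expect the main obstacle to be bookkeeping in the basis-independence step rather than any substantive difficulty.
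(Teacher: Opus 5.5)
Your proof is correct and complete. Tonelli and Parseval give $\sum_j\|Ae_j\|^2=\sum_k\|A^*f_k\|^2$, which makes the quantity basis independent (with $+\infty$ allowed); the estimate $\|A-AP_N\|^2\leqslant\sum_{j>N}\|Ae_j\|^2$ gives compactness; and evaluating in a Schmidt basis $\{u_n\}$ completed by a basis of $\ker A$ identifies the value with $\sum_n\lambda_n^2$ from (\ref{eq1}).

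The paper gives no proof of its own here; it simply quotes the result from Ringrose (Theorem 2.4.3). What you wrote is the standard textbook argument for that result, so there is nothing to add.
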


	It is worth noting that unlike the original definition of $S_2$, the above theorem offers a criterion for $S_2$ that requires checking only one orthonormal basis.
	
	\section{Composition operators on the Hardy space} \label{composition operators on the Hardy space}
	
	\subsection{Linear fractional  symbols} \mbox{}

	Let us begin with composition operators on the Hardy space whose symbols are linear fractional maps. Such a class of operators is more tractable and yields many interesting examples. Recall that Clifford and Zheng \cite{CZ} established a complete characterization of the compactness of $C_{\psi}^ * C_{\varphi}$ and $C_{\varphi} C_{\psi}^ *$, where both $\varphi$ and $\psi$ are linear fractional maps from $\DD$ into itself. Their analysis is based on the following representation theorem for the adjoint of the composition operator whose symbol is a linear fractional map.
	
	\begin{thm}[{\cite[Theorem 9.2]{CM}}] \label{theorem: Cowen's theorem}
		Let $\psi (z) = \frac{a z + b}{c z + d}$ be a linear fractional map from $\DD$ into itself with $a d - b c \neq 0$. Then $\tau (z) = \frac{\overline{a} z - \overline{c} } {-\overline{b} z + \overline{d} }$ maps $\DD$ into itself, $g(z) = \frac{1}{-\overline{b} z + \overline{d}}$ and $h(z) = c z + d$ are bounded on $\DD$, and
		\begin{align*}
			C_{\psi}^ * = T_g C_{\tau} T_{h}^ *.
		\end{align*}
	\end{thm}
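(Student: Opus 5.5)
The plan is to verify the identity on reproducing kernels of $H^2$ and extend by density. Before that, I will check the three preliminary claims about $\tau$, $g$ and $h$.

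\emph{Step 1: $g$ and $h$ are bounded.} The function $h(z) = cz + d$ is a polynomial, so it is bounded on $\DD$, and $T_h$ is a bounded analytic Toeplitz operator (multiplication by $h$). For $g$, note first that $d \neq 0$: otherwise $\psi$ would have a pole at $0 \in \DD$. Hence $\psi(0) = b/d \in \DD$, so $|b| < |d|$. The only possible pole of $g$ is at $\overline{d}/\overline{b} = 1/\overline{\psi(0)}$, which lies outside $\overline{\DD}$ (there is no pole at all if $b = 0$). Thus $g$ is bounded on $\DD$.

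\emph{Step 2: $\tau(\DD) \subset \DD$.} Write $\sigma(z) = 1/\overline{z}$ for reflection in $\TT$, and compute $\psi^{-1}(u) = \frac{du - b}{-cu + a}$. A direct substitution gives $\tau = \sigma \circ \psi^{-1} \circ \sigma$:
\begin{align*}
\psi^{-1}(1/\overline{z}) = \frac{d - b\overline{z}}{a\overline{z} - c},
\end{align*}
and taking the conjugate reciprocal yields $\frac{\overline{a}z - \overline{c}}{-\overline{b}z + \overline{d}}$. Since $\psi(\DD) \subset \DD$ and $\psi$ is a homeomorphism of the Riemann sphere, continuity gives $\psi(\overline{\DD}) \subset \overline{\DD}$. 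Taking complements, $\psi^{-1}(\widehat{\CC} \setminus \overline{\DD}) \subset \widehat{\CC} \setminus \overline{\DD}$. Because $\sigma$ interchanges $\DD$ and $\widehat{\CC} \setminus \overline{\DD}$, it follows that $\tau(\DD) \subset \DD$. In particular $C_\tau$ is bounded, so $T_g C_\tau T_h^*$ is a bounded operator.

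\emph{Step 3: the kernel computation.} Recall $C_\psi^* K_w = K_{\psi(w)}$ and $T_h^* K_w = \overline{h(w)} K_w$. The second holds because $\langle f, T_h^* K_w \rangle = h(w) f(w)$. Therefore
\begin{align*}
T_g C_\tau T_h^* K_w(z) = \overline{h(w)}\, \frac{g(z)}{1 - \overline{w}\,\tau(z)}.
\end{align*}
Clearing the denominator $-\overline{b}z + \overline{d}$ gives
\begin{align*}
\frac{g(z)}{1 - \overline{w}\tau(z)} = \frac{1}{(\overline{d} + \overline{c}\,\overline{w}) - z(\overline{b} + \overline{a}\,\overline{w})}.
\end{align*}
Multiplying by $\overline{h(w)} = \overline{c}\,\overline{w} + \overline{d}$ gives $\left(1 - z\,\overline{(aw + b)/(cw + d)}\right)^{-1} = K_{\psi(w)}(z)$. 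So both sides agree on every $K_w$.

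\emph{Step 4: density.} The span of $\{K_w : w \in \DD\}$ is dense in $H^2$, and both operators are bounded. Hence the identity on kernels implies $C_\psi^* = T_g C_\tau T_h^*$ on all of $H^2$.

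The only step needing real care is Step 2, the self-map property of $\tau$. The reflection identity $\tau = \sigma \psi^{-1} \sigma$ handles it cleanly. The degenerate case $b = 0$ needs no separate argument, since then $g = 1/\overline{d}$ is constant.
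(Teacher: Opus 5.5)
Your proposal is correct and follows the approach the paper relies on: the cited Cowen--MacCluer argument, which the paper repeats for its Bergman-space analogue (Lemma~\ref{lemma: representation of the adjoint}), checks $C_\psi^* K_w = T_g C_\tau T_h^* K_w$ on reproducing kernels and extends by density. Your reflection identity $\tau = \sigma\circ\psi^{-1}\circ\sigma$ with $\sigma(z)=1/\overline{z}$, used for the self-map property, is the same identity $\tau = \delta\circ(\widetilde{\psi})^{-1}\circ\delta$ that the paper uses in the proof of Theorem~\ref{theorem: characterization of Sp for linear fractional symbols}.
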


	Using the above theorem, Clifford and Zheng proved the following key lemma.
	
	\begin{lem}[{\cite{CZ}}]\label{L-CZ}
		Suppose $\varphi$ and $\psi$ are linear fractional maps from $\DD$ into itself. Let $\tau$ be the map associated with $\psi$ as described in Theorem \ref{theorem: Cowen's theorem}. Then:
		\begin{enumerate}[label = \textup{(\alph*)}]
			\item $C_{\varphi} C_{\psi}^ *$ is compact if and only if $C_{\varphi} C_{\tau}$ is compact, and
			\item $C_{\psi}^ * C_{\varphi}$ is compact if and only if $C_{\tau} C_{\varphi}$ is compact.
		\end{enumerate}
	\end{lem}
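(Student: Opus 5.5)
By Theorem \ref{theorem: Cowen's theorem}, $C_{\psi}^* = T_g C_{\tau} T_h^*$, where $g(z) = 1/(-\overline{b} z + \overline{d})$ and $h(z) = cz + d$. The plan is to show that the bounded factors $T_g$ and $T_h^*$ can be removed from both sides of each product without affecting compactness.

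First, $g$ and $h$ are bounded, and neither vanishes on the closed disk. For $h$ this holds because $\psi$ maps $\DD$ into itself, which forces $|d| > |c|$. For $g$ it holds because $\tau$ is a self-map of the same kind, which forces $|d| > |b|$. Consequently $1/g$ and $1/h$ are also bounded analytic functions. Hence $T_g$ and $T_h$ are analytic Toeplitz (multiplication) operators, and they are invertible, with inverses $T_{1/g}$ and $T_{1/h}$. The same then holds for $T_h^*$.

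For part (a) we write
\begin{align*}
C_{\varphi} C_{\psi}^* = C_{\varphi} T_g C_{\tau} T_h^*.
\end{align*}
Since $T_h^*$ is invertible, compactness of $C_{\varphi} C_{\psi}^*$ is equivalent to compactness of $C_{\varphi} T_g C_{\tau}$. Next we use the intertwining relation $C_{\varphi} T_g = T_{g\circ\varphi} C_{\varphi}$. Because $g\circ\varphi$ is bounded and bounded away from zero, $T_{g\circ\varphi}$ is invertible. It follows that $C_{\varphi} T_g C_{\tau}$ is compact if and only if $C_{\varphi} C_{\tau}$ is compact.

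For part (b) we write
\begin{align*}
C_{\psi}^* C_{\varphi} = T_g C_{\tau} T_h^* C_{\varphi}.
\end{align*}
We drop $T_g$ on the left, since it is invertible. The main obstacle is the factor $T_h^*$ sitting between $C_{\tau}$ and $C_{\varphi}$, because adjoints of multiplication operators do not intertwine with composition operators in a simple way. I would first try to handle this by taking adjoints. The operator $C_{\tau} T_h^* C_{\varphi}$ is compact if and only if
\begin{align*}
C_{\varphi}^* T_h C_{\tau}^*
\end{align*}
is compact. We then use $T_h C_{\tau}^* = (C_{\tau} T_{\overline{h}})^*$, which is not helpful directly. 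So instead I would write $T_h^* = \overline{d}I + \overline{c}\,T_z^*$, exploiting that $h$ is affine. This gives
\begin{align*}
C_{\tau} T_h^* C_{\varphi} = \overline{d}\, C_{\tau} C_{\varphi} + \overline{c}\, C_{\tau} T_z^* C_{\varphi}.
\end{align*}
The idea is to show that the second term equals a multiple of $C_{\tau} C_{\varphi}$ modulo compact operators. This would use the facts that $T_z^*$ is the backward shift, that $T_z^* f = (f - f(0))/z$, and that a rank-one correction $f\mapsto f(0)$ is harmless. Concretely, I would compare $C_{\tau}T_z^*$ with $T_{1/\tau}$-type expressions near the boundary points where $\tau$ touches $\TT$.

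If that route stalls, the fallback is to work at the level of the symbols. The composition $\tau\circ\varphi$, or $\varphi\circ\tau$ in part (a), is again a linear fractional map. A product of a composition operator with a linear fractional symbol and invertible Toeplitz factors is compact exactly when that symbol does not touch $\TT$. Verifying that the extra factor $T_h^*$ does not change this contact condition then gives (b).
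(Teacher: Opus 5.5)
Your part (a) is correct and is the standard argument. It only strips off invertible factors, so it works verbatim for every $S_p$.

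Part (b), however, is not proved. You correctly locate the difficulty: $T_h^*$ is trapped between $C_\tau$ and $C_\varphi$. But nothing in the proposal removes it.

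\begin{itemize}
\item The statement that $C_\tau T_z^* C_\varphi$ is a scalar multiple of $C_\tau C_\varphi$ modulo compacts is only announced as a goal.
\item Comparing $C_\tau T_z^*$ with $T_{1/\tau}C_\tau$ is not available globally, because $\tau$ may vanish in $\DD$. For example, $\tau(z)=(z+s)/(1+s)$ with $0<s<1$ touches $\TT$ at $1$ and vanishes at $-s$. Localizing at the contact points would need a compactness lemma for weighted composition operators, which you do not supply.
\item As stated, the goal is false when $\varphi$ and $\psi$ are both automorphisms with $c\neq 0$. Then $\tau$ is also an automorphism, so $C_\tau$ and $C_\varphi$ are invertible, and the claim would force $T_z^*-\lambda I$ to be compact.
\item The fallback paragraph is circular. ``Verifying that $T_h^*$ does not change the contact condition'' is exactly assertion (b).
\end{itemize}

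The missing idea is to move the backward shift through $C_\varphi$ rather than through $C_\tau$. This is what the paper does in its Bergman-space analogue. Composing $\varphi$ with the automorphism $\phi(w)=(\varphi(0)-w)/(1-\overline{\varphi(0)}w)$ multiplies both $C_\psi^*C_\varphi$ and $C_\tau C_\varphi$ on the right by the invertible operator $C_\phi$. So you may assume $\varphi(0)=0$. Then $T_z^*C_\varphi=T_\alpha C_\varphi T_z^*$ with $\alpha=\varphi/z\in H^\infty$, and $C_\tau T_\alpha=T_{\alpha\circ\tau}C_\tau$. Hence
\begin{align*}
C_\psi^* C_\varphi=\overline{d}\,T_g C_\tau C_\varphi+\overline{c}\,T_g T_{\alpha\circ\tau} C_\tau C_\varphi T_z^*,
\end{align*}
which gives the ``if'' half of (b) at once. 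The paper refers to \cite{CZ} for the converse.

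Your adjoint idea would in fact give both halves if you also apply Theorem \ref{theorem: Cowen's theorem} to $\varphi$. Write $C_\varphi^*=T_{g_1}C_{\tau_1}T_{h_1}^*$ with $h_1(z)=c_1 z+d_1$. The commutator
\begin{align*}
T_{h_1}^*T_h-T_hT_{h_1}^*=\overline{c_1}c\,(T_z^*T_z-T_zT_z^*)
\end{align*}
is the rank-one map $f\mapsto \overline{c_1}c\,f(0)$. Hence $C_\varphi^*T_h=T_{h\circ\tau_1}C_\varphi^*+F$ with $\rank F\leqslant 1$. Taking adjoints gives
\begin{align*}
C_\psi^* C_\varphi=T_g C_\tau C_\varphi T_{h\circ\tau_1}^*+T_g C_\tau F^*.
\end{align*}
Since $|h|\geqslant |d|-|c|>0$ on $\overline{\DD}$, the operator $T_{h\circ\tau_1}^*$ is invertible. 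So (b) follows in both directions, and indeed for every $S_p$.
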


	In fact, the proof of Lemma \ref{L-CZ} remains valid for the Schatten classes: one only needs to replace ``compact operators'' by ``Schatten $p$-class operators'' in that  proof.
	
	\begin{lem} \label{lemma: linear fractional symbols}
		Suppose $\varphi$ and $\psi$ are linear fractional maps from $\DD$ into itself. Let $\tau$ be the map associated with $\psi$ as described in Theorem \ref{theorem: Cowen's theorem}. Then for any $0 < p < \infty$, we have:
		\begin{enumerate}[label = \textup{(\alph*)}]
			\item $C_{\varphi} C_{\psi}^ *$ is in $S_p (H^ 2)$ if and only if $C_{\varphi} C_{\tau}$ is in $S_p (H^ 2)$, and
			\item $C_{\psi}^ * C_{\varphi}$ is in $S_p (H^ 2)$ if and only if $C_{\tau} C_{\varphi}$ is in $S_p (H^ 2)$.
		\end{enumerate}
	\end{lem}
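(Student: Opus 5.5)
Since $S_p$ is a two-sided ideal, the plan is to sandwich $C_{\varphi}C_{\psi}^*$ between bounded operators and $C_{\varphi}C_{\tau}$, and vice versa, using Theorem \ref{theorem: Cowen's theorem}: $C_{\psi}^* = T_g C_{\tau} T_h^*$, where $g(z) = 1/(-\overline{b}z+\overline{d})$ and $h(z) = cz+d$. Both $g$ and $h$ are bounded analytic functions that do not vanish on $\overline{\DD}$. Indeed, $h$ has its pole-free zero $-d/c$ outside $\overline{\DD}$, since $\psi$ maps $\DD$ into itself. Likewise $g$ has no zeros and its pole $\overline{d}/\overline{b}$ lies outside $\overline{\DD}$. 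Hence $1/g$ and $1/h$ are also bounded analytic on $\DD$, so $T_g$, $T_h$ (analytic Toeplitz operators, i.e.\ multiplication operators) are invertible with inverses $T_{1/g}$, $T_{1/h}$. If $c=0$ or $b=0$ the corresponding factor is a nonzero constant and the claim is trivial. I will verify the nonvanishing from $|d|>|c|$ and $|d|>|b|$, which follow from $\psi(\DD)\subset\DD$; this is the standard fact used in Cowen's theorem.

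For part (b), the key intertwining relation is $C_{\tau}T_g = T_{g\circ\tau}C_{\tau}$, valid for any bounded analytic $g$, and similarly $C_{\tau}T_{1/g} = T_{(1/g)\circ\tau} C_{\tau}$. Write
\begin{align*}
C_{\psi}^*C_{\varphi} = T_g C_{\tau} T_h^* C_{\varphi}.
\end{align*}
Taking adjoints, $T_h^* C_{\varphi}$ relates to $C_{\varphi}^*T_h$. A cleaner approach is to handle $T_h^*C_{\varphi}$ via $C_{\varphi}^* T_h = (T_h^*C_\varphi)^*$ and the relation $C_\varphi T_{h\circ\,\cdot}$. However, $T_h^*$ does not intertwine simply with $C_\varphi$, so I will instead follow the Clifford--Zheng argument. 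The point is that for linear fractional $\varphi$, the operator $T_h^*C_\varphi - C_\varphi T_{k}^*$ (for a suitable bounded analytic $k$, namely $k=h\circ\sigma$-type expressions from Cowen's formula applied to $C_\varphi^*$) is a finite-rank perturbation or is expressible through Cowen's formula.

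Concretely, I expect the intended route to be the following: commute the invertible factor $T_h^*$ past $C_\varphi$ using $C_{\varphi}^*T_h = T_{g_1}C_{\tau_1}T_{h_1}^*T_h$, where $\varphi=(a_1z+b_1)/(c_1z+d_1)$ and $g_1,\tau_1,h_1$ are as in Cowen's theorem. This expresses everything as products of invertible Toeplitz/composition-type factors and the single noninvertible core $C_\tau C_\varphi$ (respectively $C_\varphi C_\tau$). Then $A\in S_p \iff XAY\in S_p$ for invertible $X,Y$, and since $S_p$ is an ideal for every $0<p<\infty$ (the quasi-norm case $p<1$ still gives an ideal by singular value estimates $s_n(XAY)\le\|X\|\|Y\|s_n(A)$), the equivalence follows.

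Part (a) is analogous: $C_\varphi C_\psi^* = C_\varphi T_g C_\tau T_h^* = T_{g\circ\varphi} C_\varphi C_\tau T_h^*$. Here $T_{g\circ\varphi}$ is invertible with inverse $T_{(1/g)\circ\varphi}$, and $T_h^*$ is invertible. Hence $C_\varphi C_\psi^*\in S_p \iff C_\varphi C_\tau\in S_p$ directly by the ideal property and singular value inequality. The main obstacle is part (b), where $T_h^*$ sits between $C_\tau$ and $C_\varphi$ rather than on the outside. I would first try to show that $C_\tau T_h^* - T_{u}^* C_\tau$ (or a similar commutator) differs from an invertible-factor product by a finite-rank operator, exactly as in \cite{CZ}. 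I would then note that finite-rank perturbations preserve $S_p$ membership, so each step of Clifford--Zheng's compactness argument, which only uses invertibility, ideal properties and finite-rank/compact perturbations that are in fact finite rank, transfers verbatim.
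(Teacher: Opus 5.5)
Your argument for (a) is correct and complete. You write $C_\varphi C_\psi^* = T_{g\circ\varphi}\,C_\varphi C_\tau\,T_h^*$, where both outer factors are invertible because $|b|<|d|$ and $|c|<|d|$. The inequality $s_n(XAY)\leqslant\|X\|\,\|Y\|\,s_n(A)$ then covers every $0<p<\infty$.

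Part (b), however, is not proved, because none of your mechanisms removes $T_h^*$ from between $C_\tau$ and $C_\varphi$:
\begin{enumerate}
\item Rewriting $(T_h^*C_\varphi)^*=C_\varphi^*T_h=T_{g_1}C_{\tau_1}T_{h_1}^*T_h$ trades $C_\varphi$ for $C_{\tau_1}$ and still leaves a co-analytic Toeplitz factor in the middle. It therefore never produces invertible factors around $C_\tau C_\varphi$.
\item The finite-rank relations for $T_h^*C_\varphi-C_\varphi T_k^*$ or $C_\tau T_h^*-T_u^*C_\tau$ are only hoped for, with no candidate symbol and no proof.
\item Your fallback, that Clifford--Zheng's compactness proof transfers verbatim, is what the paper itself asserts. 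But your reason for it (that the perturbations there ``are in fact finite rank'') is an unverified guess. The Bergman section shows this is exactly where restrictions on $p$ can enter: there $T_{\overline z}=Q+R$ with $R\in S_p$ only for $p>1$.
\end{enumerate}

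What is missing is an exact identity. First normalize $\varphi(0)=0$. With $a=\varphi(0)$ and $\phi_a(w)=(a-w)/(1-\overline a w)$, one has $C_{\phi_a\circ\varphi}=C_\varphi C_{\phi_a}$ with $C_{\phi_a}$ invertible. So $C_\psi^*C_\varphi$ and $C_\tau C_\varphi$ change by the same invertible right factor.

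On $H^2$ one has exactly $T_h^*=\overline c\,S^*+\overline d\,I$, where $S^*=T_{\overline z}$ is the backward shift; unlike on $L_a^2$, there is no compact remainder. Moreover $S^*C_\varphi=T_\alpha C_\varphi S^*$ with $\alpha(z)=\varphi(z)/z$, and $\|\alpha\|_\infty\leqslant 1$ by Schwarz's lemma. Combined with $C_\tau T_\alpha=T_{\alpha\circ\tau}C_\tau$, this gives, for $X=C_\tau C_\varphi$,
\[
C_\psi^*C_\varphi=T_g\bigl(\overline d\,X+\overline c\,T_{\alpha\circ\tau}\,X\,S^*\bigr).
\]
The ``if'' direction follows immediately.

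The ``only if'' direction needs more than the ideal property. Set $\mathcal{M}(Y)=T_{\alpha\circ\tau}YS^*$ and $\mathcal{L}(Y)=\overline d\,Y+\overline c\,\mathcal{M}(Y)$. Then $\|\mathcal{M}^n(Y)\|\leqslant\|Y\|$ and $\|\mathcal{M}^n(Y)\|_{S_p}\leqslant\|Y\|_{S_p}$. Since $|c|<|d|$, the Neumann series $\overline d^{-1}\sum_{n\geqslant 0}(-\overline c/\overline d)^n\mathcal{M}^n$ inverts $\mathcal{L}$ simultaneously on the bounded operators and on $S_p$. For $0<p<1$, use $\|A+B\|_{S_p}^p\leqslant\|A\|_{S_p}^p+\|B\|_{S_p}^p$. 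As $\mathcal{L}(X)=T_{1/g}\,C_\psi^*C_\varphi\in S_p$, it follows that $X\in S_p$.
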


	From the characterizations of the Schatten $p$-class for $C_{\varphi} C_{\tau} = C_{\varphi \circ \tau}$ and $C_{\tau} C_{\varphi} = C_{\varphi \circ \tau}$, we obtain:
	
	\begin{thm} \label{theorem: characterization of Sp for linear fractional symbols}
		Suppose $\varphi$ and $\psi$ are linear fractional maps from $\DD$ into itself. Then for any $0 < p < \infty$, we have:
		\begin{enumerate}[label = \textup{(\alph*)}]
			\item $C_{\varphi} C_{\psi}^ *$ is in $S_p (H^ 2)$ if and only if $\varphi (\TT) \cap \psi (\TT) \cap \TT = \varnothing$, and
			\item $C_{\psi}^ * C_{\varphi}$ is in $S_p (H^ 2)$ if and only if $\varphi^ {-1} (\TT) \cap \psi^ {-1} (\TT) \cap \TT = \varnothing$.
		\end{enumerate}
	\end{thm}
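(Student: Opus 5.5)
By Lemma \ref{lemma: linear fractional symbols}, $C_{\varphi} C_{\psi}^ * \in S_p(H^2)$ if and only if $C_{\varphi} C_{\tau} = C_{\tau \circ \varphi} \in S_p(H^2)$. Likewise, $C_{\psi}^ * C_{\varphi} \in S_p(H^2)$ if and only if $C_{\tau} C_{\varphi} = C_{\varphi \circ \tau} \in S_p(H^2)$. Note the order of composition: $C_{\varphi} C_{\tau} f = f \circ \tau \circ \varphi$. Both $\tau \circ \varphi$ and $\varphi \circ \tau$ are linear fractional self-maps of $\DD$, or constants. So the plan has two steps. First, for a linear fractional self-map $u$ of $\DD$, prove the dichotomy: $C_u \in S_p$ for every $0<p<\infty$ if $u(\overline{\DD}) \subset \DD$, and $C_u$ is not even compact if $u(\zeta) \in \TT$ for some $\zeta \in \TT$. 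Second, translate the condition ``$u(\TT)\cap\TT=\varnothing$'' for $u = \tau\circ\varphi$ and $u=\varphi\circ\tau$ into the stated conditions on $\varphi,\psi$.

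\emph{The dichotomy.} If $u(\overline{\DD}) \subset \DD$, then $|u| \leqslant r < 1$ on $\DD$, so $N_u(z) = 0$ for $|z| > r$. Also $\int_{|z|\leqslant r} [N_u(z)/\log(1/|z|)]^{p/2}\,\rd A(z)<\infty$, because Littlewood's inequality (\ref{ineq3}) bounds $N_u$ by a logarithmic singularity at $u(0)$. Hence Theorem \ref{therorem: characterization of Sp for Cphi}(a) gives $C_u \in S_p$ for all $p$. (Alternatively, $C_u$ factors through composition with $z\mapsto rz$, which is diagonal with singular values $r^n$.) Conversely, suppose $u(\zeta)=\eta\in\TT$ for some $\zeta \in \TT$. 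Then $u$ is analytic across $\zeta$, so it has finite angular derivative at $\zeta$. The standard fact that a compact $C_u$ admits no finite angular derivative at any boundary point (equivalently, $\|C_u^* k_{w}\| = \|K_{u(w)}\|/\|K_w\| \not\to 0$ as $w \to \zeta$ radially) then shows that $C_u$ is not compact, hence in no $S_p$. I expect this step to be the main point requiring care, although it is classical; I would prove it directly from the quotient $(1-|w|^2)/(1-|u(w)|^2)$, which stays bounded away from $0$ along the radius to $\zeta$.

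\emph{Translating to $\varphi,\psi$.} A direct computation shows $\tau = \rho \circ \psi^{-1} \circ \rho$, where $\rho(z) = 1/\overline{z}$ and $\psi^{-1}(w) = (dw-b)/(-cw+a)$ is the inverse Möbius map on the Riemann sphere. Since $\rho$ fixes $\TT$ pointwise, for $\zeta \in \TT$ we get $\tau(\zeta) \in \TT$ if and only if $\psi^{-1}(\zeta) \in \TT$, i.e. $\zeta \in \psi(\TT)$, and then $\tau(\zeta) = \psi^{-1}(\zeta)$. We also use that any self-map of $\DD$ that is analytic on $\overline{\DD}$ sends $\DD$ into $\DD$ and $\overline{\DD}$ into $\overline{\DD}$. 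Hence a composite sends a point $\xi \in \TT$ to $\TT$ only if the first map already sends $\xi$ to $\TT$.

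For (a), $\tau\circ\varphi(\xi)\in\TT$ for some $\xi \in \TT$ if and only if $\zeta=\varphi(\xi)\in\TT$ and $\tau(\zeta)\in\TT$. This holds exactly when some $\zeta \in \varphi(\TT)\cap\psi(\TT)\cap\TT$ exists. For (b), $\varphi\circ\tau(\zeta)\in\TT$ for some $\zeta \in \TT$ if and only if $\eta=\tau(\zeta)=\psi^{-1}(\zeta)\in\TT$ and $\varphi(\eta)\in\TT$. This holds exactly when some $\eta \in \TT$ has $\psi(\eta)\in\TT$ and $\varphi(\eta)\in\TT$, i.e. $\eta \in \varphi^{-1}(\TT)\cap\psi^{-1}(\TT)\cap\TT$. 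Combining these with the dichotomy and Lemma \ref{lemma: linear fractional symbols} gives both (a) and (b), for every $0<p<\infty$.

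The degenerate case where $\varphi$ is constant is covered by the same argument: then $C_u$ has rank one, and $\varphi(\TT)$ is a single point of $\DD$.
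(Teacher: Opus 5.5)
Your proof is correct and has the same skeleton as the paper's. Lemma \ref{lemma: linear fractional symbols} reduces everything to a single $C_u$ with $u=\tau\circ\varphi$ (resp.\ $u=\varphi\circ\tau$; you have the order of composition right). The dichotomy is then whether $u(\overline{\DD})\subset\DD$. The paper phrases this as $\Phi(\DD)\supseteq\overline{\DD}$ for $\Phi=(\tau\circ\varphi)^{-1}$, which is the same condition. It handles that case exactly as your first option: $N_{\tau\circ\varphi}$ is supported in some $\rho\DD$, and Littlewood's inequality bounds it there.

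The real difference is in the non-membership direction. The paper turns the tangency of $\Phi(\DD)$ with $\TT$ into $\varphi(\omega)=\psi(\ee^{\ii\theta})\in\TT$. It then cites \cite[Theorem 2.1]{CZ} for the non-compactness of $C_\varphi C_\psi^*$ itself. You instead prove directly that $C_u$ is not compact, using the finite angular derivative: $\|C_u^*k_w\|^2=(1-|w|^2)/(1-|u(w)|^2)$ stays away from $0$ along the radius. You then transport non-membership back through the $S_p$-equivalence of the lemma. This makes your argument self-contained apart from the lemma.

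Your translation step also differs. Writing $\tau=\rho\circ\psi^{-1}\circ\rho$ with $\rho$ fixing $\TT$ treats (a) and (b) symmetrically, whereas the paper writes out only (a).

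One small omission: Theorem \ref{theorem: Cowen's theorem} requires $\psi$ to be nonconstant, so you must dispose of constant $\psi$ separately. This is trivial: $C_\psi^*$ then has rank one, and both sets in the statement are empty.
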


	\begin{proof}
		We only prove (a), since the proof of (b) can be done  similarly. We may assume that $\psi$ is not a constant function, for otherwise the conclusion is trivial. Let $\tau$ be the linear fractional map associated with $\psi$ as defined in Theorem \ref{theorem: Cowen's theorem}. Let $\delta (z) = \frac{1}{z}$ and define $\widetilde{f} (z) = \overline{f(\overline{z})}$. Then we have
		\begin{align*}
			\tau = \delta \circ \big( \widetilde{\psi} \big)^ {-1} \circ \delta.
		\end{align*}
		By Lemma \ref{lemma: linear fractional symbols}, it suffices to investigate the Schatten $p$-class membership of $C_{\tau \circ \varphi}$.  In  view of Theorem \ref{therorem: characterization of Sp for Cphi}, this is further reduced to a calculation of $N_{\tau \circ \varphi}$. The Nevanlinna counting function of $\tau \circ \varphi$ is given by
		\begin{align*}
			N_{\tau \circ \varphi} (z) =
			\begin{cases}
				\log{\frac{1}{|(\tau \circ \varphi)^ {-1} (z)|}}, & 0 < |(\tau \circ \varphi)^ {-1} (z)| < 1,\vspace{2mm} \\
				0, & \textrm{otherwise}.
			\end{cases}
		\end{align*}
		Putting
		\begin{align*}
			\Phi = (\tau \circ \varphi)^ {-1} = \varphi^ {-1} \circ \delta \circ \widetilde{\psi} \circ \delta,
		\end{align*}
		we can then rewrite $N_{\tau \circ \varphi}$ as
		\begin{align*}
			N_{\tau \circ \varphi} (z) =
			\begin{cases}
				\log{\frac{1}{|\Phi (z)|}}, & 0 < |\Phi (z)| < 1,\vspace{2mm} \\
				0, & \textrm{otherwise}.
			\end{cases}
		\end{align*}
		Letting $\widehat{\CC}$ denote the extended complex plane, i.e., $\widehat{\CC}=\mathbb C\cup\{\infty\}$, then
		\begin{align*}
			\widetilde{\psi} (\widehat{\CC} \backslash \overline{\DD}) = \widehat{\CC} \backslash\widetilde{\psi} (\overline{\DD}) \supseteq \widehat{\CC} \backslash\overline{\DD},
		\end{align*}
		and hence
		\begin{align*}
			\Phi (\DD) = (\varphi^ {-1} \circ \delta \circ \widetilde{\psi} \circ \delta) (\DD) = (\varphi^ {-1} \circ \delta \circ \widetilde{\psi}) (\widehat{\CC} \backslash \overline{\DD}) \supseteq (\varphi^ {-1} \circ \delta) (\widehat{\CC}\backslash\overline{\DD}) = \varphi^ {-1} (\DD) \supseteq \DD.
		\end{align*}
		Now we  consider the following two cases.
		
		\textbf{Case 1.} $\Phi (\DD) \supseteq \overline{\DD}$. In this case, we have
		\begin{align*}
			\int_{\DD} \left[ \frac{N_{\tau \circ \varphi} (z)}{\log{(1 / |z|)}} \right]^ {\frac{p}{2}} \frac{\rd A (z)}{(1 - |z|^ 2)^ 2} = \int_{\Phi^ {-1} (\DD)} \left[ \frac{N_{\tau \circ \varphi} (z)}{\log{(1 / |z|)}} \right]^ {\frac{p}{2}} \frac{\rd A (z)}{(1 - |z|^ 2)^ 2},
		\end{align*}
		where $p$ is any positive number.
		Since the compact set $\Phi^ {-1} \left( \overline{\DD} \right)$ is contained in $\Phi^ {-1} (\Phi (\DD)) = \DD$, there exists some $\rho \in (0, 1)$ so that $\Phi^ {-1} \left( \overline{\DD} \right) \subseteq \rho \DD$. It follows that
		\begin{align*}
			\int_{\DD} \left[ \frac{N_{\tau \circ \varphi} (z)}{\log{(1 / |z|)}} \right]^ {\frac{p}{2}} \frac{\rd A (z)}{(1 - |z|^ 2)^ 2} & \leqslant \frac{1}{[\log{(1 / \rho)}]^ {\frac{p}{2}}} \frac{1}{(1 - \rho^ 2)^ 2} \int_{\Phi^ {-1} (\DD)} [N_{\tau \circ \varphi} (z)]^ {\frac{p}{2}} \dd A(z).
		\end{align*}
		By Littlewood's inequality (\ref{ineq3}), the integral $$\int_{\DD} \big[N_{\tau \circ \varphi} (z)\big]^{\frac{p}{2}} \dd A(z)$$ is finite. We conclude by Lemma \ref{lemma: linear fractional symbols} and Theorem \ref{therorem: characterization of Sp for Cphi} that $C_{\varphi} C_{\psi}^ *$ is in $S_p (H^ 2)$ for $0 < p < \infty$.
		
		\textbf{Case 2.} $\Phi (\DD) \nsupseteq \overline{\DD}$ (i.e., the boundary of $\Phi (\DD)$ is externally tangent to $\TT$). Then we can find some point $\ee^ {\ii \theta} \in \TT$ such that $\Phi (\ee^ {\ii \theta}) = \omega \in \TT$. By the definition of $\Phi$, we have
		\begin{align*}
			\frac{1}{\varphi (\omega)} = \delta \circ \varphi (\omega) = \widetilde{\psi} \circ \delta (\ee^ {\ii \theta}) = \overline{\psi (\ee^ {\ii \theta})}.
		\end{align*}
		Observing that $|\psi (\ee^ {\ii \theta})\varphi (\omega)| = 1$ and recalling that $\varphi ( \overline{\DD})$ and $\psi ( \overline{\DD} )$ are both  contained in $\overline{\DD}$, we must have $|\psi (\ee^ {\ii \theta})| = |\varphi (\omega)| = 1$. It follows that
		\begin{align*}
			\varphi (\omega) = \psi (\ee^ {\ii \theta}) \in \TT.
		\end{align*}
		By \cite[Theorem 2.1]{CZ}, $C_{\varphi} C_{\psi}^ *$ is not compact, and in particular, it is not in any $S_p (H^ 2)$ for $0 < p < \infty$.
		
		Therefore, for each $0 < p < \infty$, $C_{\varphi} C_{\psi}^ * \notin S_p (H^ 2)$ if and only if $\Phi (\DD) \nsupseteq \overline{\DD}$. In the discussion of Case 2, we have shown that $\Phi (\DD) \nsupseteq \overline{\DD}$ leads to
		\begin{align}
			\varphi (\TT) \cap \psi (\TT) \cap \TT \neq \varnothing, \label{ineq11}
		\end{align}
		which gives the sufficiency part of (a).

		Conversely, if (\ref{ineq11}) holds, then there are points $\xi, \zeta$ and $\omega_0$ on $\TT$ such that
		\begin{align*}
			\varphi (\xi) = \psi (\zeta) = \omega_0,
		\end{align*}
		and a direct calculation then gives $\Phi (\zeta) = \xi$. Consequently, $\Phi (\DD) \nsupseteq \overline{\DD}$ and hence $C_{\varphi} C_{\psi}^ * \notin S_p (H^ 2)$, to obtain the necessity part of (a).

This finishes the proof of Theorem \ref{theorem: characterization of Sp for linear fractional symbols}.
	\end{proof}

	\begin{coro} \label{corollary: characterization of Sp for linear fractional symbols}
		Suppose that $\varphi$ and $\psi$ are linear fractional maps from $\DD$ into itself. Then for any $0 < p < \infty$:
		\begin{enumerate}[label = \textup{(\alph*)}]
			\item $C_{\varphi} C_{\psi}^ *$ is in $S_p (H^ 2)$ if and only if $C_{\varphi} C_{\psi}^ *$ is compact, and
			\item $C_{\psi}^ * C_{\varphi}$ is in $S_p (H^ 2)$ if and only if $C_{\psi}^ * C_{\varphi}$ is compact.
		\end{enumerate}
	\end{coro}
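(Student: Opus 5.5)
The corollary follows almost immediately from the dichotomy in the proof of Theorem \ref{theorem: characterization of Sp for linear fractional symbols}. That theorem characterizes membership in $S_p(H^2)$ for every $0<p<\infty$ by a condition that does not involve $p$. So I only need to show that the same condition characterizes compactness. The implication ``$S_p$ implies compact'' is automatic, since every Schatten class operator is compact (Section 2). The work is in the converse.

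For (a), suppose $C_{\varphi} C_{\psi}^*$ is compact. I argue by contraposition, using the Case 1 / Case 2 split from the proof of the theorem. If $C_{\varphi} C_{\psi}^* \notin S_p(H^2)$ for some $p$, then by Theorem \ref{theorem: characterization of Sp for linear fractional symbols}(a) we have $\varphi(\TT)\cap\psi(\TT)\cap\TT\neq\varnothing$. Equivalently, with $\Phi=(\tau\circ\varphi)^{-1}$, we have $\Phi(\DD)\nsupseteq\overline{\DD}$, which is Case 2 of that proof. In that case the argument already shows, via \cite[Theorem 2.1]{CZ}, that $C_{\varphi} C_{\psi}^*$ is not compact. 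This contradicts the assumption, so compactness forces the Case 1 situation, and hence membership in every $S_p(H^2)$.

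Alternatively, I could avoid quoting \cite{CZ} directly. I would use Lemma \ref{L-CZ}(a) to reduce compactness of $C_\varphi C_\psi^*$ to compactness of $C_{\tau\circ\varphi}$. I would then use Lemma \ref{lemma: linear fractional symbols}(a) to reduce $S_p$ membership to that of $C_{\tau\circ\varphi}$. It would then remain to check that for the single linear fractional composition operator $C_{\tau\circ\varphi}$, compactness and $S_p$ membership coincide. This is the known fact that a linear fractional $C_\phi$ is compact iff $\phi(\overline{\DD})\subseteq\DD$, in which case $\phi(\overline{\DD})\subseteq\rho\overline{\DD}$ for some $\rho<1$. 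The Case 1 computation, or the Littlewood inequality estimate, then gives the integral condition of Theorem \ref{therorem: characterization of Sp for Cphi} for all $p$.

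Part (b) is identical, using Theorem \ref{theorem: characterization of Sp for linear fractional symbols}(b), Lemma \ref{L-CZ}(b) and Lemma \ref{lemma: linear fractional symbols}(b), with the roles of $\tau\circ\varphi$ and $\varphi\circ\tau$ adjusted. The only point requiring care, and the main potential obstacle, is confirming that the compactness criterion of \cite{CZ} in case (b) matches the condition $\varphi^{-1}(\TT)\cap\psi^{-1}(\TT)\cap\TT=\varnothing$, i.e. that the ``similar'' proof of (b) indeed has a non-compactness conclusion in its Case 2. I would verify this through the $\tau$-reduction described above rather than by a separate geometric argument.
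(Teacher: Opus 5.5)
Your proposal is correct. For (a), your main argument is the paper's proof. The paper puts Theorem \ref{theorem: characterization of Sp for linear fractional symbols} next to the compactness criteria \cite[Theorems 2.1 and 2.3]{CZ}, which are the same geometric conditions. Your contraposition through Case 2 is that comparison written out.

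For (b), the paper again just cites \cite[Theorem 2.3]{CZ}, and your $\tau$-reduction is a different route. Lemma \ref{L-CZ}(b) turns compactness of $C_\psi^* C_\varphi$ into compactness of the single operator $C_\tau C_\varphi = C_{\varphi\circ\tau}$. For a linear fractional symbol, compactness forces $\|\varphi\circ\tau\|_\infty<1$, and hence $C_{\varphi\circ\tau}\in S_p$ for every $0<p<\infty$ by the Case 1 estimate. Lemma \ref{lemma: linear fractional symbols}(b) then carries this back to $C_\psi^*C_\varphi$.

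What your route buys:
\begin{itemize}
\item It never uses Theorem \ref{theorem: characterization of Sp for linear fractional symbols} or the explicit boundary conditions, so the unverified ``similarly'' proof of its part (b) is not needed.
\item It uses only the easy half of Lemma \ref{lemma: linear fractional symbols}: that $C_{\varphi\circ\tau}\in S_p$ (resp.\ $C_{\tau\circ\varphi}\in S_p$) implies the product is in $S_p$. This follows from the factorization in Theorem \ref{theorem: Cowen's theorem} and the ideal property. The paper justifies that lemma only by the remark ``replace compact by $S_p$'', so relying on just this half makes your argument more robust.
\end{itemize}
The paper's route, in contrast, needs the explicit compactness criteria of \cite{CZ} but is a one-line citation.

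The only thing you leave implicit is the degenerate case where $\varphi$ or $\psi$ is constant; Theorem \ref{theorem: Cowen's theorem} excludes this via $ad-bc\neq 0$. There both products have rank at most one, so the statement is trivial.
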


	\begin{proof}
		This is an immediate consequence of Theorem \ref{theorem: characterization of Sp for linear fractional symbols} and  \cite[Theorems 2.1 and 2.3]{CZ}.
	\end{proof}

\begin{remark}
Based on the  characterization of  the compactness of $C_{\psi}^ * C_{\varphi}$ and $C_{\varphi} C_{\psi}^ *$, Clifford and Zheng \cite{CZ} constructed three examples of such operators, including:
	\begin{enumerate}[label = \textup{(\alph*)}]
		\item $C_{\psi}^ * C_{\varphi}$ is not compact, but $C_{\varphi} C_{\psi}^ *$ is compact;
		\item $C_{\varphi} C_{\psi}^ *$ is not compact, but $C_{\psi}^ * C_{\varphi}$ is compact;
		\item $C_{\varphi}$ and $C_{\psi}$ are not compact, but both $C_{\psi}^ * C_{\varphi}$ and $C_{\varphi} C_{\psi}^ *$ are compact.
	\end{enumerate}
	Using Corollary \ref{corollary: characterization of Sp for linear fractional symbols},  we obtain that the operators constructed in (a), (b) and (c) are also examples for the Schatten classes.
\end{remark}

	\subsection{\texorpdfstring{The operator $C_{\psi}^ * C_{\varphi}$}{The Operator Cpsi* Cphi}}\mbox{}

	In \cite{Cli}, Clifford calculated an upper estimate for the essential norm of $C_{\psi}^ * C_{\varphi}$. The key to the calculation lies in estimating the operator  norm $\| Q_n C_{\psi}^ * C_{\varphi} Q_n \|$, where $Q_n$ will be specified later. This, together with Theorem \ref{theorem: characterization of Sp}, also yields  a sufficient condition for $C_{\psi}^ * C_{\varphi} \in S_p (H^ 2)$.
	
	For each positive integer $n$, let $P_n$ be the orthogonal projection from $H^ 2$ onto $\spa{\{ 1, z, \cdots, z^ n \}}$, and $Q_n = I - P_n$, where $I$ stands for the identity operator on $H^2$. Instead of invoking \cite[Lemma 5.2]{Sha} directly, we establish a somewhat more refined result.
	
	\begin{lem} \label{lemma: estimate of Qnf and (Qnf)'}
		Let $Q_n$ be the operator defined above. Then we have
		\begin{align*}
			\left| (Q_n f) (z) \right| \leqslant \frac{|z|^ {n + 1}}{\sqrt{1 - |z|^ 2}} \| f \|_2
		\end{align*}
		and
		\begin{align*}
			\left| (Q_n f)' (z) \right| \leqslant C |z|^ n \left[ \frac{1}{(1 - |z|^ 2)^ {{\frac{3}{2}} }} + \frac{\sqrt{n}}{(1 - |z|^ 2)} + \frac{n}{(1 - |z|^ 2)^ {{\frac{1}{2}} }} \right] \| f \|_{2}
		\end{align*}
		for every positive integer $n$, $f \in H^ 2$, and $z \in \DD$, where $C > 0$ is some constant.
	\end{lem}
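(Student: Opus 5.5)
We work directly with the Taylor coefficients. Write $f(z)=\sum_{k\ge 0} a_k z^k$ with $\sum |a_k|^2 = \|f\|_2^2$. Then $(Q_n f)(z) = \sum_{k \geqslant n+1} a_k z^k$, and both estimates will follow from Cauchy--Schwarz applied to the tail.

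\textbf{First estimate.} Cauchy--Schwarz gives
\begin{align*}
|(Q_n f)(z)| \leqslant \|f\|_2 \Big( \sum_{k \geqslant n+1} |z|^{2k} \Big)^{\frac12} = \|f\|_2 \frac{|z|^{n+1}}{\sqrt{1-|z|^2}},
\end{align*}
which is exactly the first inequality.

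\textbf{Second estimate.} Here $(Q_n f)'(z) = \sum_{k\geqslant n+1} k a_k z^{k-1}$, so
\begin{align*}
|(Q_n f)'(z)| \leqslant \|f\|_2 \Big( \sum_{k \geqslant n+1} k^2 |z|^{2k-2} \Big)^{\frac12}.
\end{align*}
Put $r = |z|^2$ and substitute $k = n+1+j$ with $j \geqslant 0$. This gives
\begin{align*}
\sum_{k \geqslant n+1} k^2 r^{k-1} = r^{n} \sum_{j\ge 0} (n+1+j)^2 r^j \leqslant 2 r^n \sum_{j \ge 0}\big( (n+1)^2 + j^2 \big) r^j,
\end{align*}
using $(a+b)^2 \leqslant 2a^2+2b^2$. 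We then bound the two standard series: $\sum_j r^j = (1-r)^{-1}$ and $\sum_j j^2 r^j = r(1+r)/(1-r)^3 \leqslant 2(1-r)^{-3}$. The sum is therefore at most
\begin{align*}
C r^n \left[ \frac{1}{(1-r)^3} + \frac{(n+1)^2}{1-r} \right].
\end{align*}

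Taking the square root and using $\sqrt{a+b} \leqslant \sqrt a + \sqrt b$ yields
\begin{align*}
|(Q_n f)'(z)| \leqslant C |z|^n \Big[ (1-|z|^2)^{-\frac32} + (n+1)(1-|z|^2)^{-\frac12} \Big] \|f\|_2.
\end{align*}
Since $n+1 \leqslant 2n$, this already gives the claimed bound; the middle term $\sqrt n/(1-|z|^2)$ is nonnegative and can simply be added. If one wants that cross term to appear naturally, expand $(n+1+j)^2 = (n+1)^2 + 2(n+1)j + j^2$ exactly instead of using the crude inequality, and use $\sum_j j r^j \leqslant (1-r)^{-2}$. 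This produces the three terms $(1-r)^{-3}$, $n(1-r)^{-2}$ and $n^2(1-r)^{-1}$, whose square roots match the statement.

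There is no genuine obstacle here. The only care needed is bookkeeping of the index shift, so that the factor is $|z|^n$ rather than $|z|^{n+1}$: the derivative lowers the power, giving $r^{(k-1)}$ with $k-1 \geqslant n$.
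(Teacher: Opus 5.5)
Your proof is correct and follows the paper's own route: Cauchy--Schwarz on the Taylor tail, then summing $\sum_{k\geqslant n+1}k^2|z|^{2k-2}$, which the paper evaluates exactly as $|z|^{2n}\big[2(1-|z|^2)^{-3}+(2n-1)(1-|z|^2)^{-2}+n^2(1-|z|^2)^{-1}\big]$ --- this is your ``exact expansion'' variant. Your cruder bound via $(a+b)^2\leqslant 2a^2+2b^2$ is also valid and shows, as you note, that the middle term $\sqrt{n}/(1-|z|^2)$ is redundant, since by AM--GM it is dominated by the other two terms.
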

	
	\begin{proof}
		Let $a_j$ be the $j$-th Fourier coefficient of $f$. Then
		\begin{align*}
			(Q_n f) (z) = \sum_{j = n + 1}^ {\infty} {a_j z^ j} \qquad \textrm{and} \qquad (Q_n f)' (z) = \sum_{j = n + 1}^ {\infty} {j a_j z^ {j - 1}}.
		\end{align*}
		By the Cauchy--Schwarz inequality we have
		\begin{align*}
			\left| (Q_n f) (z) \right| & \leqslant \left( \sum_{j = n + 1}^ {\infty} {|a_j|^ 2} \right)^ {\frac{1}{2}} \left( \sum_{j = n + 1}^ {\infty} {|z|^ {2 j}} \right)^ {\frac{1}{2}}  \leqslant \frac{|z|^ {n + 1}}{\sqrt{1 - |z|^ 2}} \| f \|_2
		\end{align*}
for every $z$ in $\DD$.
		A similar argument also yields that
		\begin{align*}
			\left| (Q_n f)' (z) \right| & \leqslant |z|^ n \left[ \frac{2}{(1 - |z|^ 2)^ 3} + \frac{2 n - 1}{(1 - |z|^ 2)^ 2} + \frac{n^ 2}{1 - |z|^ 2} \right]^ {{\frac{1}{2}} } \| f \|_{2} \\
			& \leqslant C |z|^ n \left[ \frac{1}{(1 - |z|^ 2)^ {{\frac{3}{2}} }} + \frac{\sqrt{n}}{1 - |z|^ 2} + \frac{n}{(1 - |z|^ 2)^ {{\frac{1}{2}} }} \right] \| f \|_2
		\end{align*}
		for every $z$ in $\DD$,  where  $C > 0$ is a constant.
	\end{proof}

Using the above lemma, we can establish a sufficient condition for the operator $C_{\psi}^ * C_{\varphi}$ to belong to $S_p (H^ 2)$.

	\begin{thm} \label{proposition: sufficient condition for Cpsi*Cphi in Sp}
		Suppose $\varphi$ and $\psi$ are analytic self-mappings of $\DD$, and $1 \leqslant  p < \infty$. Then  $C_{\psi}^ * C_{\varphi} \in S_p (H^ 2)$ provided that
		\begin{align}
			\int_{\DD} \frac{1}{[1 - |\varphi (z)|]^ {\frac{3p}{2}} [1 - |\psi (z)|]^ {\frac{3p}{2}}} \frac{|\varphi' (z)\psi' (z)|}{1 - |\varphi (z)\psi (z)|} \dd \sigma (z) < +\infty, \label{ineq6}
		\end{align}
where $\rd \sigma (z) = \log{(1 / |z|^ 2)} \dd A(z)$.
	\end{thm}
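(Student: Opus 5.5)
We will apply Theorem \ref{theorem: characterization of Sp} with finite rank approximants built from the projections $P_n$. Since
\begin{align*}
C_{\psi}^ * C_{\varphi} = P_n C_{\psi}^ * C_{\varphi} + Q_n C_{\psi}^ * C_{\varphi} P_n + Q_n C_{\psi}^ * C_{\varphi} Q_n,
\end{align*}
the first two summands have rank at most $n+1$ each, so $F_n := P_n C_{\psi}^ * C_{\varphi} + Q_n C_{\psi}^ * C_{\varphi} P_n$ satisfies $\rank (F_n) \leqslant 2n+2 \leqslant 4n$. By the relaxed rank condition (\ref{ineq9}) it therefore suffices to prove
\begin{align*}
\sum_{n=1}^{\infty} \| Q_n C_{\psi}^ * C_{\varphi} Q_n \|^ p < +\infty .
\end{align*}

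To estimate the norm, take $f, g \in H^ 2$ of norm one and write $\langle Q_n C_{\psi}^ * C_{\varphi} Q_n f, g\rangle_2 = \langle (Q_n f)\circ\varphi, (Q_n g)\circ\psi \rangle_2$. The Littlewood--Paley identity (Theorem \ref{theorem: Littlewood-Paley identity}) turns this into
\begin{align*}
(Q_nf)(\varphi(0))\overline{(Q_ng)(\psi(0))} + \int_{\DD} (Q_nf)'(\varphi(z))\overline{(Q_ng)'(\psi(z))}\,\varphi'(z)\overline{\psi'(z)} \dd \sigma(z).
\end{align*}
The boundary term is bounded by $|\varphi(0)\psi(0)|^{n+1}(1-|\varphi(0)|^2)^{-1/2}(1-|\psi(0)|^2)^{-1/2}$ by the first estimate of Lemma \ref{lemma: estimate of Qnf and (Qnf)'}. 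This is geometric in $n$, so its $p$-th powers are summable. For the integral we use the second estimate of that lemma. Since $n\,x^{n/2}(1-x)^{1/2}$ and $\sqrt n\, x^{n/2}$ are uniformly controlled by $(1-x)^{-1/2}$ up to constants, we crudely bound, for $w=\varphi(z)$, and similarly for $\psi(z)$,
\begin{align*}
|(Q_nf)'(w)| \leqslant C\,|w|^{n/2}\,\frac{n+1}{(1-|w|)^{3/2}} ,
\end{align*}
or more carefully with the three-term bracket kept. This gives
\begin{align*}
\| Q_n C_{\psi}^ * C_{\varphi} Q_n \| \lesssim \rho^{n} + \int_{\DD} \frac{(n+1)^2\,|\varphi(z)\psi(z)|^{n/2}}{(1-|\varphi(z)|)^{3/2}(1-|\psi(z)|)^{3/2}}\,|\varphi'(z)\psi'(z)| \dd\sigma(z).
\end{align*}

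The main obstacle is summing the $p$-th powers of these integrals so that the result matches (\ref{ineq6}). Here the exponent $\frac{3p}{2}$ is placed on the singular factor, while only a single factor $\frac{1}{1-|\varphi\psi|}$ appears. We plan to use Hölder/Jensen with respect to a probability measure, so that the $p$-th power moves inside the integral. Concretely, write the integrand as $F(z)\,G_n(z)$ with $G_n(z)=|\varphi\psi|^{n}|\varphi'\psi'|$ normalized suitably. One must then avoid the factor $(n+1)^2$. We expect this to require using the sharper form of Lemma \ref{lemma: estimate of Qnf and (Qnf)'} and absorbing powers of $n$ via $n^k x^n \lesssim (1-x)^{-k}$, at the cost of extra powers of $(1-|\varphi|)^{-1}$. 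After applying Jensen to each $n$, interchange sum and integral by Tonelli. Then evaluate
\begin{align*}
\sum_n |\varphi(z)\psi(z)|^{n} = \frac{1}{1-|\varphi(z)\psi(z)|}.
\end{align*}
This produces exactly the integrand of (\ref{ineq6}) with the $\frac{3p}{2}$ exponents. The delicate part is choosing the splitting so that Jensen applies with a total mass bounded independently of $n$. If direct Jensen fails, a fallback is to estimate dyadic blocks $2^k\leqslant n<2^{k+1}$ of the sum, where $\|Q_nAQ_n\|$ is monotone in $n$.
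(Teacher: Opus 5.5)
Your plan is correct and is essentially the paper's proof. With $A=C_\psi^*C_\varphi$, your $F_n=P_nA+Q_nAP_n$ is the same operator as the paper's $AP_n+P_nA-P_nAP_n$. The paper likewise splits $\langle C_\varphi Q_nf,C_\psi Q_ng\rangle_2$ via Littlewood--Paley and applies H\"older against the finite measure $|\varphi'\psi'|\,\rd\sigma$, which is finite because $\int_{\DD}|\varphi'|^2\,\rd\sigma\leqslant\|\varphi\|_2^2$ together with Cauchy--Schwarz. It then uses Tonelli and absorbs the powers of $n$ into factors of $(1-|\varphi|)^{-1/2}$ and $(1-|\psi|)^{-1/2}$, doing so after summing $\sum_n n^{a}x^{n}\lesssim(1-x)^{-1-a}$ rather than before.

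The factor $(n+1)^2$ you worried about never needs to appear. Your own observations already give $|(Q_nf)'(w)|\lesssim|w|^{n/2}(1-|w|)^{-3/2}\|f\|_2$, and then $\sum_n|\varphi\psi|^{np/2}\lesssim_p(1-|\varphi\psi|)^{-1}$ for $p\geqslant1$ yields exactly (\ref{ineq6}), so the dyadic fallback is unnecessary.
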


	\begin{proof}
		Assume that (\ref{ineq6}) holds. Let $P_n$ and $Q_n$  be defined as in the second paragraph of this subsection. It is easy to see that
		\begin{align*}
			C_{\psi}^ * C_{\varphi} - \left( C_{\psi}^ * C_{\varphi} P_n + P_n C_{\psi}^ * C_{\varphi} - P_n C_{\psi}^ * C_{\varphi} P_n \right) = Q_n C_{\psi}^ * C_{\varphi} Q_n
		\end{align*}
		and
		\begin{align*}
			\rank{\left( C_{\psi}^ * C_{\varphi} P_n + P_n C_{\psi}^ * C_{\varphi} - P_n C_{\psi}^ * C_{\varphi} P_n \right)} \leqslant 3 (n + 1).
		\end{align*}
		By Theorem \ref{theorem: characterization of Sp}, to show $C_{\psi}^ * C_{\varphi} \in S_p (H^ 2)$ it suffices to show that
		\begin{align*}
			\sum_{n = 1}^ {\infty} {\| Q_n C_{\psi}^ * C_{\varphi} Q_n \|^ p} < +\infty.
		\end{align*}

		Since
		\begin{align*}
			\| Q_n C_{\psi}^ * C_{\varphi} Q_n \| = \sup{\left\{ \left| \left\langle C_{\varphi} Q_n f, C_{\psi} Q_n g \right\rangle_2 \right| : f, g \in H^ 2 \textrm{ with } \| f \|_2\leqslant 1, \| g \|_2 \leqslant 1 \right\}},
		\end{align*}
		we turn our attention to estimating $\left\langle C_{\varphi} Q_n f, C_{\psi} Q_n g \right\rangle_2$, where $f$ and $g$ are functions in $H^ 2$ whose norms are at most $1$. Applying Theorem \ref{theorem: Littlewood-Paley identity}, we get
		\begin{align*}
			\left\langle C_{\varphi} Q_n f, C_{\psi} Q_n g \right\rangle_2 & = \underbrace{(Q_n f) (\varphi (0)) \overline{(Q_n g) (\psi (0))}}_{\alpha_n (f, g)} + \underbrace{\int_{\DD} (Q_n f)' (\varphi (z)) \overline{(Q_n g)' (\psi (z))} \varphi' (z) \overline{\psi' (z)} \dd \sigma (z)}_{\beta_n (f, g)}.
		\end{align*}
		By Lemma \ref{lemma: estimate of Qnf and (Qnf)'}, we have the following two estimates:
		\begin{align*}
			\left| \alpha_n (f, g) \right| \leqslant \frac{|\varphi (0)|^ {n + 1} |\psi (0)|^ {n + 1}}{\sqrt{1 - |\varphi (0)|^ 2} \sqrt{1 - |\psi (0)|^ 2}} =: \alpha_n
		\end{align*}
		and
		\begin{align*}
			\left| \beta_n (f, g) \right| \leqslant  C \sum_{k, l = 0}^ 2 {n^ {\frac{k + l}{2}} \int_{\DD} \frac{|\varphi (z)|^ n |\psi (z)|^ n |\varphi' (z) \psi' (z)|}{\left[ 1 - |\varphi (z)|^2 \right]^ {\frac{3 - k}{2}} \left[ 1 - |\psi (z)|^ 2 \right]^ {\frac{3 - l}{2}}} \dd \sigma (z)} =: \beta_n,
		\end{align*}
		where $C > 0$ is some constant. It follows that
		\begin{align*}
			\| Q_n C_{\psi}^ * C_{\varphi} Q_n \|\leqslant \alpha_n + \beta_n.
		\end{align*}
		The sequence $\{ \alpha_n \}_{n = 1}^ {\infty}$ is $p$-summable, so we only need to show that
		\begin{align}
			\sum_{n = 1}^ {\infty} {\beta_n^ p} < +\infty. \label{ineq7}
		\end{align}
		Note that
		\begin{align*}
			\int_{\DD} |\varphi' (z)|~ |\psi' (z)| \dd \sigma (z) & \leqslant \Big[ \int_{\DD} |\varphi' (z)|^ 2 \dd \sigma (z) \Big]^ {\frac{1}{2}} \Big[ \int_{\DD} |\psi' (z)|^ 2 \dd \sigma (z) \Big]^ {\frac{1}{2}} \\
			& = 2 \Big[ \int_{\DD} N_{\varphi} (z) \dd A(z) \Big]^ {\frac{1}{2}} \Big[ \int_{\DD} N_{\psi} (z) \dd A(z) \Big]^ {\frac{1}{2}},
		\end{align*}
		where the ``$=$" is due to Theorem \ref{theorem: a change of variables}. It follows from (\ref{ineq3}) that the measure $|\varphi' (z)\psi' (z)| \dd \sigma (z)$ is finite. This allows us to apply H\"older's inequality to obtain that
		\begin{align*}
			& \sum_{n = 1}^ {\infty} {n^ {\frac{p (k + l)}{2}} \bigg\{ \int_{\DD} \frac{|\varphi (z)|^ n |\psi (z)|^ n |\varphi' (z)\psi' (z)|}{\left[ 1 - |\varphi (z)|^ 2 \right]^ {\frac{3 - k}{2}} \left[ 1 - |\psi (z)|^ 2 \right]^ {\frac{3 - l}{2}}} \dd \sigma (z) \bigg\}^ p} \\
			& \ \ \ \leqslant C' \sum_{n = 1}^ {\infty} {n^ {\frac{p (k + l)}{2}} \int_{\DD} \bigg\{ \frac{|\varphi (z)|^ n |\psi (z)|^ n}{\left[ 1 - |\varphi (z)|^ 2 \right]^ {\frac{3 - k}{2}} \left[ 1 - |\psi (z)|^ 2 \right]^ {\frac{3 - l}{2}}} \bigg\}^ p |\varphi' (z)\psi' (z)| \dd \sigma (z)} \\
			& \ \ \  \leqslant C'' \int_{\DD} \frac{|\varphi' (z)\psi' (z)|}{\left[ 1 - |\varphi (z)|^ 2 \right]^ {\frac{p (3 - k)}{2}} \left[ 1 - |\psi (z)|^ 2 \right]^ {\frac{p (3 - l)}{2}} \left[ 1 - |\varphi (z)|^ p |\psi (z)|^ p \right]^ {1 + \frac{p (k + l)}{2}}} \dd \sigma (z),
		\end{align*}
		where $k, l \in \{ 0, 1, 2 \}$ and $C', C'' > 0$ are constants. Since
		\begin{align*}
			\left( 1 - |\varphi (z)|^ p |\psi (z)|^ p \right)^ {\frac{p (k + l)}{2}} \geqslant \left( 1 - |\varphi (z)| \right)^ {\frac{p k}{2}} \left( 1 - |\psi (z)| \right)^ {\frac{p l}{2}},
		\end{align*}
		we further obtain that
		\begin{align*}
			&\sum_{n = 1}^ {\infty} {n^ {\frac{p (k + l)}{2}} \bigg\{ \int_{\DD} \frac{|\varphi (z)|^ n |\psi (z)|^ n |\varphi' (z)\psi' (z)|}{\left[ 1 - |\varphi (z)|^ 2 \right]^ {\frac{3 - k}{2}} \left[ 1 - |\psi (z)|^ 2 \right]^ {\frac{3 - l}{2}}} \dd \sigma (z) \bigg\}^ p} \\
			&\ \ \ \leqslant C'' \int_{\DD} \frac{|\varphi' (z)\psi' (z)|}{\left[ 1 - |\varphi (z)|^ 2 \right]^ { \frac{3p}{2}} \left[ 1 - |\psi (z)|^ 2 \right]^ {\frac{3p}{2}} \left[ 1 - |\varphi (z)\psi (z)| \right]} \dd \sigma (z),
		\end{align*}
		which is finite by (\ref{ineq6}). Since $k$ and $l$ in the above estimations are arbitrary numbers in $\{ 0, 1, 2 \}$, we obtain (\ref{ineq7}), and the proof is now complete.
	\end{proof}

	Of all Schatten $p$-classes, the Hilbert--Schmidt class and the trace class are of particular interest. We will concentrate on them for the remainder of this subsection. Let us begin by giving two sufficient conditions under which $C_{\psi}^ * C_{\varphi}$ is a Hilbert--Schmidt operator.
	
	\begin{prop} \label{proposition: sufficient condition for S2, Hardy space}
		Suppose that $\varphi$ and $\psi$ are analytic self-mappings of $\DD$. If
		\begin{align}
			\int_{\TT} \int_{\TT} \frac{\rd m(\ee^ {\ii t}) \rd m (\ee^ {\ii s})}{\big| 1 - \varphi (\ee^ {\ii t}) \overline{\varphi (\ee^ {\ii s})} \big| \big| 1 - \overline{\psi (\ee^ {\ii t})} \psi (\ee^ {\ii s}) \big|} < +\infty, \label{ineq1}
		\end{align}
then  $C_{\psi}^ * C_{\varphi} \in S_2 (H^ 2)$.
	\end{prop}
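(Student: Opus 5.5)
We compute the Hilbert--Schmidt norm in the monomial basis $\{z^n\}_{n\geqslant 0}$ of $H^2$, using Theorem \ref{theorem: characterization of S2}:
\begin{align*}
\| C_{\psi}^* C_{\varphi} \|_{S_2}^2 = \sum_{j,k\geqslant 0} |\langle C_{\psi}^* C_{\varphi} z^j, z^k\rangle_2|^2 = \sum_{j,k\geqslant 0} |\langle \varphi^j, \psi^k\rangle_2|^2 .
\end{align*}
Each inner product is a boundary integral $\langle \varphi^j,\psi^k\rangle_2=\int_{\TT}\varphi(\ee^{\ii t})^j\,\overline{\psi(\ee^{\ii t})}^{\,k}\dd m(\ee^{\ii t})$, with the boundary values of $\varphi,\psi$ lying in $\overline{\DD}$ a.e. Expanding the modulus squared as a double integral, we get
\begin{align*}
|\langle \varphi^j,\psi^k\rangle_2|^2=\int_{\TT}\int_{\TT}\big(\varphi(\ee^{\ii t})\overline{\varphi(\ee^{\ii s})}\big)^j\big(\overline{\psi(\ee^{\ii t})}\psi(\ee^{\ii s})\big)^k\dd m(\ee^{\ii t})\dd m(\ee^{\ii s}).
\end{align*}

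The natural step is to sum over $j,k$ and interchange sum and integral, so that the geometric series give
\begin{align*}
\sum_{j,k}|\langle \varphi^j,\psi^k\rangle_2|^2=\int_{\TT}\int_{\TT}\frac{\rd m(\ee^{\ii t})\dd m(\ee^{\ii s})}{\big(1-\varphi(\ee^{\ii t})\overline{\varphi(\ee^{\ii s})}\big)\big(1-\overline{\psi(\ee^{\ii t})}\psi(\ee^{\ii s})\big)},
\end{align*}
whose modulus is bounded by the integral in (\ref{ineq1}). The difficulty is that the series need not converge absolutely: on the boundary $|\varphi|$ may equal $1$, so Fubini cannot be applied directly.

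This justification is the main obstacle, and I would handle it with an Abel regularization. For $0<r<1$, consider the weighted sum $S(r)=\sum_{j,k} r^{2j+2k}|\langle\varphi^j,\psi^k\rangle_2|^2$. Here $|r^2\varphi(\ee^{\ii t})\overline{\varphi(\ee^{\ii s})}|\leqslant r^2<1$, so the double series converges absolutely and uniformly. Interchanging is then legitimate and gives
\begin{align*}
S(r)=\int_{\TT}\int_{\TT}\frac{\rd m\,\rd m}{\big(1-r^2\varphi(\ee^{\ii t})\overline{\varphi(\ee^{\ii s})}\big)\big(1-r^2\overline{\psi(\ee^{\ii t})}\psi(\ee^{\ii s})\big)}.
\end{align*}
Since $S(r)$ is a sum of nonnegative terms, it is real and equals its own modulus. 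For $|w|\leqslant 1$ we have $|1-r^2w|\geqslant \frac12|1-w|$, because $|1-w|\leqslant|1-r^2w|+(1-r^2)|w|$ and $1-r^2\leqslant |1-r^2 w|$. Hence
\begin{align*}
S(r)\leqslant 4\int_{\TT}\int_{\TT}\frac{\rd m(\ee^{\ii t})\dd m(\ee^{\ii s})}{\big|1-\varphi(\ee^{\ii t})\overline{\varphi(\ee^{\ii s})}\big|\,\big|1-\overline{\psi(\ee^{\ii t})}\psi(\ee^{\ii s})\big|},
\end{align*}
which is finite by (\ref{ineq1}), uniformly in $r$.

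Finally, letting $r\to1^-$ and applying the monotone convergence theorem to the nonnegative terms shows that $\sum_{j,k}|\langle\varphi^j,\psi^k\rangle_2|^2<+\infty$. By Theorem \ref{theorem: characterization of S2}, $C_{\psi}^*C_{\varphi}\in S_2(H^2)$, with $\|C_\psi^*C_\varphi\|_{S_2}^2$ bounded by four times the integral in (\ref{ineq1}).
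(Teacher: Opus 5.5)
Your proof is correct and follows essentially the same route as the paper: work in the monomial basis, expand $|\langle \varphi^j,\psi^k\rangle_2|^2$ as a double integral over $\TT\times\TT$, sum the geometric series, and dominate by $4/\bigl(|1-\varphi(\ee^{\ii t})\overline{\varphi(\ee^{\ii s})}|\,|1-\overline{\psi(\ee^{\ii t})}\psi(\ee^{\ii s})|\bigr)$, which gives the same constant $4$. The only difference is the regularization: the paper truncates at $j,k\leqslant N$ and uses $|1-w^{N+1}|\leqslant 2$, after first checking that the sets where these products equal $1$ are null, whereas your Abel weights $r^{2j+2k}$ together with $|1-r^2w|\geqslant\frac12|1-w|$ give the same bound and make that null-set check unnecessary.
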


	\begin{proof}
		According to Theorem \ref{theorem: characterization of S2}, we only need to prove that
		\begin{align*}
			\sum_{j, k = 0}^ {\infty} {|\langle C_{\psi}^ * C_{\varphi} (z^ j), z^ k \rangle_2|^ 2} < +\infty
		\end{align*}
		if (\ref{ineq1}) holds.
		Indeed, for any nonnegative integer $N$ we have
		\begin{align*}
			\sum_{j, k = 0}^ N {|\langle C_{\psi}^ * C_{\varphi} (z^ j), z^ k \rangle_2|^ 2} & = \sum_{j, k = 0}^ N {|\langle C_{\varphi} (z^ j), C_{\psi} (z^ k) \rangle_2|^ 2} \\
			& = \sum_{j, k = 0}^ N {\left| \int_{\TT} \varphi^ j \overline{\psi}^ k \dd m \right|^ 2} \\
			& = \sum_{j, k = 0}^ N {\int_{\TT} \int_{\TT} \varphi (\ee^ {\ii t})^ j \overline{\varphi (\ee^ {\ii s})}^ j \overline{\psi (\ee^ {\ii t})}^ k \psi (\ee^ {\ii s})^ k \dd m(\ee^ {\ii t}) \rd m (\ee^ {\ii s})}.
		\end{align*}
		The assumption in (\ref{ineq1}) implies that both
		\begin{align*}
			\left\{ (\ee^ {\ii t}, \ee^ {\ii s}) \in \TT \times \TT : \varphi (\ee^ {\ii t}) \overline{\varphi (\ee^ {\ii s})} = 1 \right\} \qquad \textrm{and} \qquad \left\{ (\ee^ {\ii t}, \ee^ {\ii s}) \in \TT \times \TT : \overline{\psi (\ee^ {\ii t})} \psi (\ee^ {\ii s}) = 1 \right\}
		\end{align*}
		are null sets on $\TT \times \TT$ with respect to the measure $\rd (m \times m)$. Thus,
		\begin{align*}
			&\sum_{j, k = 0}^ N {|\langle C_{\psi}^ * C_{\varphi} (z^ j), z^ k \rangle_2|^ 2} \\
&\ \ \ = \int_{\TT} \int_{\TT} \frac{1 - \varphi (\ee^ {\ii t})^ {N + 1} \overline{\varphi (\ee^ {\ii s})}^ {N + 1}}{1 - \varphi (\ee^ {\ii t}) \overline{\varphi (\ee^ {\ii s})}} \frac{1 - \overline{\psi (\ee^ {\ii t})}^ {N + 1} \psi (\ee^ {\ii s})^ {N + 1}}{1 - \overline{\psi (\ee^ {\ii t})} \psi (\ee^ {\ii s})} \dd m(\ee^ {\ii t}) \rd m (\ee^ {\ii s}).
		\end{align*}
		The integrand on the right hand side is dominated by the function
		\begin{align*}
			\frac{4}{\big| 1 - \varphi (\ee^ {\ii t}) \overline{\varphi (\ee^ {\ii s})} \big| \big| 1 - \overline{\psi (\ee^ {\ii t})} \psi (\ee^ {\ii s}) \big|},
		\end{align*}
		which is integrable over $\TT \times \TT$ due to  (\ref{ineq1}). Thus,  we conclude that
		\begin{align*}
			\sum_{j, k = 0}^ {\infty} {|\langle C_{\psi}^ * C_{\varphi} (z^ j), z^ k \rangle_2|^ 2} \leqslant 4 \int_{\TT} \int_{\TT} \frac{\rd m (\ee^ {\ii t}) \rd m (\ee^ {\ii s})}{\big| 1 - \varphi (\ee^ {\ii t}) \overline{\varphi (\ee^ {\ii s})} \big| \big| 1 - \overline{\psi (\ee^ {\ii t})} \psi (\ee^ {\ii s}) \big|} < +\infty,
		\end{align*}
		as desired.
	\end{proof}

	\begin{remark}
		Condition (\ref{ineq1}) is not only sufficient but also necessary in the case that $\varphi = \psi$. Suppose that $C_{\varphi}^ * C_{\varphi}$ belongs to $S_2 (H^ 2)$. In this case, the calculation in the proof of the  above proposition  shows that
		\begin{align*}
			\sum_{j, k = 0}^ N {|\langle C_{\varphi}^ * C_{\varphi} (z^ j), z^ k \rangle_2|^ 2} = \int_{\TT} \int_{\TT} \Big| \sum_{j = 0}^ N {\varphi (\ee^ {\ii t})^ j \overline{\varphi (\ee^ {\ii s})}^ j} \Big|^ 2 \dd m(\ee^ {\ii t}) \rd m (\ee^ {\ii s}).
		\end{align*}
		Since $C_{\varphi}^ * C_{\varphi} \in S_2 (H^ 2)$, $C_{\varphi}$ is compact, which means that $|\varphi (\ee^ {\ii \theta})| < 1$ for $m$-a.e. $\ee^ {\ii \theta} \in \TT$ (see \cite{Sch}). Therefore, the set
		\begin{align*}
			\left\{ (\ee^ {\ii t}, \ee^ {\ii s}) \in \TT \times \TT: \big| \varphi (\ee^ {\ii t}) \overline{\varphi (\ee^ {\ii s})} \big| = 1 \right\}
		\end{align*}
		is a null set with respect to the measure $\rd (m \times m)$. It follows that
		\begin{align*}
			\sum_{j, k = 0}^ N {|\langle C_{\varphi}^ * C_{\varphi} (z^ j), z^ k \rangle_2|^ 2} = \int_{\TT} \int_{\TT} \frac{\big| 1 - \varphi (\ee^ {\ii t})^ {N + 1} \overline{\varphi (\ee^ {\ii s})}^ {N + 1} \big|^ 2}{\big| 1 - \varphi (\ee^ {\ii t}) \overline{\varphi (\ee^ {\ii s})} \big|^ 2} \dd m(\ee^ {\ii t}) \rd m (\ee^ {\ii s}).
		\end{align*}
		Now applying Fatou's lemma, we find that
		\begin{align*}
			\int_{\TT} \int_{\TT} \frac{\rd m (\ee^ {\ii t}) \rd m (\ee^ {\ii s})}{\big| 1 - \varphi (\ee^ {\ii t}) \overline{\varphi (\ee^ {\ii s})} \big|^ 2} & \leqslant \varliminf_{N \rightarrow \infty} {\int_{\TT} \int_{\TT} \frac{\big| 1 - \varphi (\ee^ {\ii t})^ {N + 1} \overline{\varphi (\ee^ {\ii s})}^ {N + 1} \big|^ 2}{\big| 1 - \varphi (\ee^ {\ii t}) \overline{\varphi (\ee^ {\ii s})} \big|^ 2} \dd m (\ee^ {\ii t}) \rd m (\ee^ {\ii s})} \\
			& = \varliminf_{N \rightarrow \infty} {\sum_{j, k = 0}^ N {|\langle C_{\varphi}^ * C_{\varphi} (z^ j), z^ k \rangle_2|^ 2}} \\
			& = \| C_{\varphi}^ * C_{\varphi} \|_{S_2 (H^ 2)}^ 2.
		\end{align*}
		In other words, we derive that: $C_{\varphi}$ belongs to $S_4 (H^ 2)$ if and only if
		\begin{gather*}
			\int_{\TT} \int_{\TT} \frac{\rd m(\ee^ {\ii t}) \rd m (\ee^ {\ii s})}{\big| 1 - \varphi (\ee^ {\ii t}) \overline{\varphi (\ee^ {\ii s})} \big|^ 2} < +\infty. \qedhere
		\end{gather*}
	\end{remark}

	Our second criterion for the Hilbert--Schmidt class is analogous to Theorem \ref{proposition: sufficient condition for Cpsi*Cphi in Sp}, but under a weaker hypothesis.

	\begin{prop} \label{proposition: sufficient condition for S2, Hardy space, II}
		Suppose $\varphi$ and $\psi$ are analytic self-mappings of $\DD$. If
		\begin{align*}
			\int_{\DD} \frac{|\varphi' (z) \psi' (z)|}{(1 - |\varphi (z)|)^ \frac{3}{2} (1 - |\psi (z)|)^ \frac{3}{2}} \dd \sigma (z) < +\infty,
		\end{align*}
		then $C_{\psi}^ * C_{\varphi} \in S_2 (H^ 2)$. Here $\rd \sigma (z) = \log{(1 / |z|^ 2)} \dd A(z)$.
	\end{prop}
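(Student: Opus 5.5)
The plan is to apply Theorem \ref{theorem: characterization of S2} to the orthonormal basis $\{z^j\}_{j\ge 0}$ of $H^2$, which reduces the claim to showing
\begin{align*}
\sum_{j,k=0}^{\infty} |\langle C_{\varphi}(z^j), C_{\psi}(z^k)\rangle_2|^2 < +\infty .
\end{align*}
Each inner product can be rewritten with the Littlewood--Paley identity (Theorem \ref{theorem: Littlewood-Paley identity}):
\begin{align*}
\langle \varphi^j, \psi^k\rangle_2 = \varphi(0)^j\overline{\psi(0)}^k + jk\int_{\DD} \varphi(z)^{j-1}\overline{\psi(z)}^{k-1}\varphi'(z)\overline{\psi'(z)}\dd\sigma(z).
\end{align*}
The constant terms contribute $\sum_{j,k}|\varphi(0)|^{2j}|\psi(0)|^{2k}<\infty$, because $|\varphi(0)|,|\psi(0)|<1$. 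It therefore remains to bound $\sum_{j,k\ge1} j^2k^2 |I_{j,k}|^2$, where $I_{j,k}$ denotes the integral above.

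The key step is to bound each square by a single integral using Cauchy--Schwarz against the finite measure $\rd\mu = |\varphi'\psi'|\dd\sigma$. We already know from the proof of Theorem \ref{proposition: sufficient condition for Cpsi*Cphi in Sp} that $\mu$ is finite, via Theorem \ref{theorem: a change of variables} and (\ref{ineq3}). This gives
\begin{align*}
|I_{j,k}|^2 \leqslant \mu(\DD)\int_{\DD} |\varphi|^{2(j-1)}|\psi|^{2(k-1)}\dd\mu .
\end{align*}
Summing over $j,k$ by Tonelli and using $\sum_{j\ge1} j^2 r^{2(j-1)} = \frac{1+r^2}{(1-r^2)^3}\leqslant \frac{2}{(1-r^2)^3}$, we get
\begin{align*}
\sum_{j,k\ge1} j^2k^2|I_{j,k}|^2 \leqslant 4\mu(\DD)\int_{\DD}\frac{|\varphi'\psi'|}{(1-|\varphi|^2)^3(1-|\psi|^2)^3}\dd\sigma .
\end{align*}
This carries exponents $3$, whereas the hypothesis only has exponents $\tfrac32$, so the naive Cauchy--Schwarz loses too much. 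This is the main obstacle.

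To fix it, I would apply Cauchy--Schwarz asymmetrically, so that the $j$-sum and the $k$-sum each see only one singular factor. Write $\rd\nu = \frac{|\varphi'\psi'|}{(1-|\varphi|)^{3/2}(1-|\psi|)^{3/2}}\dd\sigma$, which is finite by hypothesis, and factor the integrand of $I_{j,k}$ as
\begin{align*}
\bigl[\,|\varphi|^{j-1}(1-|\varphi|)^{3/4}(1-|\psi|)^{3/4}\,\bigr]\cdot\bigl[\,|\psi|^{k-1}(1-|\varphi|)^{3/4}(1-|\psi|)^{3/4}\,\bigr]\ \text{ against } \rd\nu .
\end{align*}
Cauchy--Schwarz then gives $|I_{j,k}|^2\leqslant A_j B_k$, where
\begin{align*}
A_j=\int|\varphi|^{2(j-1)}(1-|\varphi|)^{3/2}(1-|\psi|)^{3/2}\dd\nu, \qquad B_k=\int|\psi|^{2(k-1)}(1-|\varphi|)^{3/2}(1-|\psi|)^{3/2}\dd\nu .
\end{align*}
Hence $\sum j^2k^2|I_{j,k}|^2\leqslant(\sum_j j^2A_j)(\sum_k k^2B_k)$, and the double sum factorizes into a product of two single sums. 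Summing in $j$ yields
\begin{align*}
\sum_j j^2A_j \lesssim \int (1-|\varphi|)^{-3/2}(1-|\psi|)^{3/2}\dd\nu = \int |\varphi'\psi'|(1-|\varphi|)^{-3}\dd\sigma ,
\end{align*}
which is still not controlled by the hypothesis. So the factorization must be chosen more cleverly. My first attempt will be to split the weight unevenly, putting $(1-|\varphi|)^{3/2}$ entirely in the $j$-factor only after the sum. Concretely, I would bound $\sum_{j,k} j^2k^2|I_{j,k}|^2 = \sum_{j,k}|\langle\cdot\rangle|^2$ by expanding the square as a double integral over $(z,w)\in\DD\times\DD$ and summing the geometric series first. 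This gives
\begin{align*}
\int\!\!\int \frac{|\varphi'(z)\psi'(z)\varphi'(w)\psi'(w)|\dd\sigma(z)\rd\sigma(w)}{|1-\varphi(z)\overline{\varphi(w)}|^3\,|1-\overline{\psi(z)}\psi(w)|^3},
\end{align*}
which has the same structure as (\ref{ineq1}). I would then use $|1-\varphi(z)\overline{\varphi(w)}|\geqslant \max(1-|\varphi(z)|,1-|\varphi(w)|)\geqslant (1-|\varphi(z)|)^{1/2}(1-|\varphi(w)|)^{1/2}$ and the analogous bound for $\psi$. With these, the kernel factorizes as
\begin{align*}
\bigl[(1-|\varphi(z)|)(1-|\psi(z)|)\bigr]^{-3/2}\,\bigl[(1-|\varphi(w)|)(1-|\psi(w)|)\bigr]^{-3/2},
\end{align*}
so the double integral is at most the square of the hypothesized integral. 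That completes the argument.
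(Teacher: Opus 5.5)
Your final argument (the last paragraph) is correct and proves the proposition. The two Cauchy--Schwarz attempts before it can simply be dropped, as can the garbled identity $\sum j^2k^2|I_{j,k}|^2=\sum|\langle\cdot\rangle|^2$. Your route differs from the paper's mainly in its bookkeeping.

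The paper conjugates by the unitary $U$ onto $\mathbb{C}\oplus L_a^2(\mathrm{d}\sigma)$ and discards a finite-rank corner to reduce to $D_\psi^*D_\varphi$. It then invokes the integral-kernel criterion of Lemma~\ref{lemma: characterization of the Hilbert--Schmidt class} and writes the kernel as $\int\psi'(u)\overline{\varphi'(u)}\,K_z(\psi(u))\overline{K_w(\varphi(u))}\,\mathrm{d}\sigma(u)$. Finally it applies Minkowski's integral inequality together with $\|K_\zeta\|^2_{L^2(\mathrm{d}\sigma)}\lesssim(1-|\zeta|)^{-3}$.

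You stay on $H^2$ with the monomial basis and apply Littlewood--Paley entrywise. The term $\varphi(0)^j\overline{\psi(0)}^k$ plays the role of the paper's finite-rank corner. Note that $jkI_{j,k}$ is exactly the matrix of $D_\psi^*D_\varphi$ in the orthonormal basis $\{jz^{j-1}\}$ of $L_a^2(\mathrm{d}\sigma)$. Also, the factor $(1+a)/(1-a)^3$ you obtain at $a=\varphi(z)\overline{\varphi(w)}$ is the reproducing kernel of that space. So your double integral is precisely the paper's squared kernel norm written out. Your pointwise bound $|1-\varphi(z)\overline{\varphi(w)}|\ge[(1-|\varphi(z)|)(1-|\varphi(w)|)]^{1/2}$ then does the work of the Cauchy--Schwarz step $|\langle F_u,F_v\rangle|\le\|F_u\|\,\|F_v\|$ that underlies Minkowski's inequality.

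Thus the two proofs rest on the same estimate. Yours is more elementary and self-contained: it needs no $U$, no $D_\varphi$ and no kernel lemma, and it runs parallel to the proof of Proposition~\ref{proposition: sufficient condition for S2, Hardy space}. The paper's version makes the operator structure visible and transfers verbatim to other reproducing kernel spaces.

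The one step you should justify explicitly is exchanging the series with the double integral. The simplest way is to bound $|I_{j,k}|$ by $\int|\varphi|^{j-1}|\psi|^{k-1}|\varphi'\psi'|\,\mathrm{d}\sigma$ before squaring and then use Tonelli; dominated convergence also works. The resulting denominators $1-|\varphi(z)||\varphi(w)|$ and $1-|\psi(z)||\psi(w)|$ still dominate the geometric means you use. This gives $\sum_{j,k\ge1}j^2k^2|I_{j,k}|^2\le 4J^2$, where $J$ is the hypothesized integral.
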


	To prove the above proposition, we require one more simple lemma. It should first be noted that every bounded operator $A$ on $L_a^ 2 (\rd \sigma)$ is an integral operator. To see this, letting $K_z$ denote the reproducing kernel for $L_a^ 2 (\rd \sigma)$ at $z \in \DD$, then we have
	\begin{align*}
		A f(z) = \langle A f, K_z \rangle_{L^ 2 (\DD, \rd \sigma)} = \langle f, A^ * K_z \rangle_{L^ 2 (\DD, \rd \sigma)} = \int_{\DD} f(w) \overline{A^ * K_z (w)} \dd \sigma (w).
	\end{align*}

	\begin{lem} \label{lemma: characterization of the Hilbert--Schmidt class}
		A bounded linear operator $A$ on $L_a^ 2 (\rd \sigma)$ belongs to $S_2 (L_a^ 2 (\rd \sigma))$ if and only if
		\begin{align*}
			\int_{\DD} \int_{\DD} \left| A^ * K_z (w) \right|^ 2 \dd \sigma (w) \rd \sigma (z) < +\infty,
		\end{align*}
		where $K_z$ is the reproducing kernel for $L_a^ 2 (\rd \sigma)$ at $z \in \DD$.
	\end{lem}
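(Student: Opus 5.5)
The plan is to evaluate the Hilbert--Schmidt norm of $A$ in the orthonormal basis $e_n(z) = (n+1) z^n$, $n \geqslant 0$, of $L_a^ 2 (\rd \sigma)$, and to show that it equals the double integral in the statement. Theorem \ref{theorem: characterization of S2} then gives the equivalence. Since that theorem requires checking only one orthonormal basis, this particular basis suffices.

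\textbf{Step 1.} By Theorem \ref{theorem: reproducing kernels}, $K_z = \sum_n \overline{e_n(z)}\, e_n$, and this series converges in $L_a^ 2 (\rd \sigma)$, because $\sum_n |e_n(z)|^2 = K_z(z) < \infty$. Since $A^*$ is bounded, $A^* K_z = \sum_n \overline{e_n(z)}\, A^* e_n$ in norm. Hence, for each fixed $z$,
\begin{align*}
\int_{\DD} |A^* K_z (w)|^2 \dd \sigma (w) = \| A^* K_z \|^2 = \sum_{n} |\langle A^* K_z, e_n \rangle|^2 = \sum_n |\langle K_z, A e_n\rangle|^2 = \sum_n |A e_n (z)|^2 .
\end{align*}
The last equality is just the reproducing property: $\langle A e_n, K_z\rangle = A e_n(z)$.

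\textbf{Step 2.} Integrate over $z$ with respect to $\rd\sigma$. The terms are nonnegative, so Tonelli's theorem lets us interchange sum and integral:
\begin{align*}
\int_{\DD} \int_{\DD} |A^* K_z(w)|^2 \dd \sigma(w) \rd \sigma(z) = \sum_n \int_{\DD} |A e_n(z)|^2 \dd \sigma(z) = \sum_n \| A e_n \|^2 .
\end{align*}
By Theorem \ref{theorem: characterization of S2}, the right-hand side is finite if and only if $A \in S_2 (L_a^ 2 (\rd \sigma))$, and in that case it equals $\|A\|_{S_2}^2$. This proves both directions at once.

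The only point needing care is measurability of $(z,w) \mapsto A^* K_z(w)$, so that Tonelli applies. I expect to handle it directly from the expansion $A^*K_z(w) = \sum_n \overline{e_n(z)}\,(A^* e_n)(w)$. For fixed $z$ this series converges in norm, hence pointwise in $w$, since point evaluations are bounded. Partial sums are measurable in $(z,w)$, and a pointwise limit of measurable functions is measurable.

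A cleaner route avoids the joint function entirely: read the double integral as the iterated integral in which the inner integral, for fixed $z$, is $\|A^*K_z\|^2$. By Step 1 this equals $\sum_n |A e_n(z)|^2$, which is manifestly measurable in $z$. I regard this measurability point as the main, though minor, obstacle; everything else is Parseval's identity followed by an interchange of sum and integral.
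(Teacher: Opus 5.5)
Your argument is correct, but it takes a different route from the paper. The paper extends $A$ to the integral operator $\widetilde{A}f(z)=\int_{\DD} f(w)\overline{A^*K_z(w)}\,\rd\sigma(w)$ on all of $L^2(\DD,\rd\sigma)$. It observes that $\widetilde{A}$ annihilates $L^2(\DD,\rd\sigma)\ominus L_a^2(\rd\sigma)$ and agrees with $A$ on $L_a^2(\rd\sigma)$, so $\widetilde{A}=A\oplus 0$. It then cites the standard fact that an integral operator on an $L^2$ space is Hilbert--Schmidt exactly when its kernel is square-integrable for the product measure.

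You instead compute the Hilbert--Schmidt norm directly. Parseval for the vector $A^*K_z$, together with $\langle A^*K_z,e_n\rangle=\overline{Ae_n(z)}$, gives $\|A^*K_z\|^2=\sum_n|Ae_n(z)|^2$. Monotone convergence in $z$ then yields $\sum_n\|Ae_n\|^2$, and the one-basis criterion for $S_2$ finishes the proof. In effect you reprove, in this reproducing-kernel setting, the special case of the integral-operator theorem that the paper quotes. Your version is self-contained and displays the identity $\|A\|_{S_2}^2=\int_{\DD}\int_{\DD}|A^*K_z(w)|^2\,\rd\sigma(w)\,\rd\sigma(z)$ explicitly. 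The paper's version is shorter because it outsources exactly that computation.

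Two minor remarks. First, the expansion $K_z=\sum_n\overline{e_n(z)}e_n$ is not needed for the identity: Parseval in any orthonormal basis suffices, so the particular choice $e_n=(n+1)z^n$ plays no role. Second, the measurability concern is moot. Since $A^*K_z(w)=\langle A^*K_z,K_w\rangle$ and $z\mapsto K_z$ is norm-continuous into $L_a^2(\rd\sigma)$, the function $(z,w)\mapsto A^*K_z(w)$ is jointly continuous on $\DD\times\DD$.
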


	\begin{proof}
		Consider the integral operator
		\begin{align*}
			\widetilde{A} f (z) = \int_{\DD} f(w) \overline{A^ * K_z (w)} \dd \sigma (w)
		\end{align*}
		defined on the Lebesgue space $L^ 2 (\DD, \rd \sigma)$. For every $f \in L^ 2 (\DD, \rd \sigma) \ominus L_a^ 2 (\rd \sigma)$, we have
		\begin{align*}
			\widetilde{A} f (z) = \langle f, A^ * K_z \rangle_{L^ 2 (\DD, \rd \sigma)} = 0, \qquad z \in \DD,
		\end{align*}
		since $A^ * K_z \in L_a^ 2 (\rd \sigma)$. Thus, under the orthogonal decomposition
		\begin{align*}
			L^ 2 (\DD, \rd \sigma) = L_a^ 2 (\rd \sigma) \oplus [L^ 2 (\DD, \rd \sigma) \ominus L_a^ 2 (\rd \sigma)],
		\end{align*}
		we obtain $\widetilde{A} = A \oplus 0$. We conclude that $\widetilde{A}$ is bounded on $L^ 2 (\DD, \rd \sigma)$, and $A \in S_2 (L_a^ 2 (\rd \sigma))$ if and only if $\widetilde{A} \in S_2 (L^ 2 (\DD, \rd \sigma))$. Now the assertion of the lemma follows from the standard result \cite[Theorem 3.5]{Zhu}.
	\end{proof}

	We are now in a position to prove Proposition \ref{proposition: sufficient condition for S2, Hardy space, II}.

	\begin{proof}[Proof of Proposition \ref{proposition: sufficient condition for S2, Hardy space, II}]
		Recall that $U$ is the unitary operator from $H^ 2$ onto $\CC\oplus L_a^ 2 (\rd \sigma)$ given by (\ref{for1}). Thus, $C_{\psi}^ * C_{\varphi} \in S_2 (H^ 2)$ if and only if $U C_{\psi}^ * C_{\varphi} U^ * \in S_2 (\CC \oplus L_a^ 2 (\rd \sigma))$. By (\ref{eq3}), we have
		\begin{align*}
			U C_{\psi}^ * C_{\varphi} U^ * =
			\begin{bmatrix}
				1 & 0 \\
				\Lambda_{\psi}^ * & D_{\psi}^ *
			\end{bmatrix}
			\begin{bmatrix}
				1 & \Lambda_{\varphi} \\
				0 & D_{\varphi}
			\end{bmatrix}
			=
			\begin{bmatrix}
				1 & \Lambda_{\varphi} \\
				\Lambda_{\psi}^ * & \Lambda_{\psi}^ * \Lambda_{\varphi} + D_{\psi}^ * D_{\varphi}
			\end{bmatrix}
			.
		\end{align*}
		Since the range of $\Lambda_{\varphi}$ and the domain of $\Lambda_{\psi}^ *$ are $\CC$, the operator
		\begin{align*}
			\begin{bmatrix}
				1 & \Lambda_{\varphi} \\
				\Lambda_{\psi}^ * & \Lambda_{\psi}^ * \Lambda_{\varphi}
			\end{bmatrix}
		\end{align*}
		is of finite rank, and in particular is in $S_2 (\CC \oplus L_a^ 2 (\rd \sigma))$. This observation implies that $U C_{\psi}^ * C_{\varphi} U^ * \in S_2 (\CC \oplus L_a^ 2 (\rd \sigma))$ if and only if the operator
		\begin{align*}
			\begin{bmatrix}
				0 & 0 \\
				0 & D_{\psi}^ * D_{\varphi}
			\end{bmatrix}
		\end{align*}
		is in $S_2 (\CC \oplus L_a^ 2 (\rd \sigma))$, which is equivalent to that $D_{\psi}^ * D_{\varphi} \in S_2 (L_a^ 2 (\rd \sigma))$. Therefore, the proof will be concluded once we show that
\(D_{\psi}^* D_{\varphi}\) belongs to the Hilbert--Schmidt class
\(S_2(L_a^2(d\sigma))\).

Letting $K_z$ be the reproducing kernel for $L_a^ 2 (\rd \sigma)$ at $z \in \DD$, we have
		\begin{align*}
			D_{\varphi}^ * h(z) & = \langle D_{\varphi}^ * h, K_z \rangle_{L^ 2 (\DD, \rd \sigma)} = \langle h, D_{\varphi} K_z \rangle_{L^ 2 (\DD, \rd \sigma)} \\
			& = \int_{\DD} h(w) \overline{\varphi' (w)} \overline{K_z (\varphi (w))} \dd \sigma (w)
		\end{align*}
		for $h \in L_a^ 2 (\rd \sigma)$.
		Hence,
		\begin{align*}
			D_{\varphi}^ * D_{\psi} K_z (w) = \int_{\DD} \psi' (u) \overline{\varphi' (u)} K_z (\psi (u)) \overline{K_w (\varphi (u))} \dd \sigma (u), \qquad z, w \in \DD.
		\end{align*}
		It follows from  Minkowski's integral inequality that
		\begin{align*}
			&\| D_{\varphi}^ * D_{\psi} K_z (w) \|_{L^ 2 (\DD \times \DD, \rd (\sigma \times \sigma) (z, w))}\\
 &\ \ \ \leqslant \int_{\DD} |\psi' (u) \varphi' (u)| \left\| K_z (\psi (u)) \overline{K_w (\varphi (u))} \right\|_{L^ 2 (\DD \times \DD, \rd (\sigma  \times \sigma) (z, w))} \! \! \! \! \dd \sigma (u).
		\end{align*}
		elementary calculations show that
		\begin{align*}
			\left\| K_z (\psi (u)) \overline{K_w (\varphi (u))} \right\|^ 2_{L^ 2 (\DD \times \DD, \rd (\sigma  \times \sigma) (z, w))} & = \int_{\DD} \frac{|1 + \overline{z} \psi (u)|^ 2}{|1 - \overline{z} \psi (u)|^ 6} \dd \sigma (z) \cdot \int_{\DD} \frac{|1 + \overline{w} \varphi (u)|^ 2}{|1 - \overline{w} \varphi (u)|^ 6} \dd \sigma (w) \\
			& \leqslant \frac{C}{(1 - |\psi (u)|)^ 3 (1 - |\varphi (u)|)^ 3},
		\end{align*}
		where $C > 0$ is a constant and the inequality  follows from  an argument analogous to the one used for (\ref{ineq10}). By Lemma \ref{lemma: characterization of the Hilbert--Schmidt class}, we conclude that $D_{\psi}^ * D_{\varphi} \in S_2 (L_a^ 2 (\rd \sigma))$.
	\end{proof}

	We now turn to the discussion of when  $C_{\psi}^*C_{\varphi}$ is a trace-class operator on $H^2$.
	
	\begin{prop} \label{proposition: sufficient condition for S1, Hardy space}
		Suppose $\varphi$ and $\psi$ are analytic self-mappings of $\DD$. Then $C_{\psi}^ * C_{\varphi} \in S_1 (H^ 2)$ if
		\begin{align}
			\int_{\TT} \frac{\rd m (\ee^ {\ii t})}{\sqrt{1 - |\varphi (\ee^ {\ii t})|} \sqrt{1 - |\psi (\ee^ {\ii t})|}} < +\infty. \label{ineq8}
		\end{align}
	\end{prop}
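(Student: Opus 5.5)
The plan is to write $C_{\psi}^ * C_{\varphi}$ as an absolutely convergent integral, in trace norm, of rank-one operators built from reproducing kernels. The trace-class property and the norm bound then follow from the triangle inequality for Bochner integrals in $S_1 (H^ 2)$.

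\emph{Step 1 (boundary values lie inside $\DD$).} Condition (\ref{ineq8}) forces the integrand to be finite $m$-a.e. Hence $|\varphi (\ee^ {\ii t})| < 1$ and $|\psi (\ee^ {\ii t})| < 1$ for $m$-a.e. $\ee^ {\ii t} \in \TT$. For such $t$, the fact quoted from \cite[Proposition 2.25]{CM} gives $(C_{\varphi} f)(\ee^ {\ii t}) = f(\varphi (\ee^ {\ii t})) = \langle f, K_{\varphi (\ee^ {\ii t})} \rangle_2$, where $f(\varphi(\ee^ {\ii t}))$ is now an honest interior value of $f$. The analogous identity holds for $\psi$. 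Consequently, for all $f, g \in H^ 2$,
\begin{align*}
\langle C_{\psi}^ * C_{\varphi} f, g \rangle_2 = \int_{\TT} \langle f, K_{\varphi (\ee^ {\ii t})} \rangle_2 \, \overline{\langle g, K_{\psi (\ee^ {\ii t})} \rangle_2} \dd m(\ee^ {\ii t}).
\end{align*}

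\emph{Step 2 (rank-one decomposition).} For $a, b \in \DD$, let $R_{a, b}$ be the rank-one operator $f \mapsto \langle f, K_a \rangle_2 K_b$. Its trace norm is
\begin{align*}
\| R_{a, b} \|_{S_1} = \| K_a \|_2 \| K_b \|_2 = \frac{1}{\sqrt{1 - |a|^ 2} \sqrt{1 - |b|^ 2}} \leqslant \frac{1}{\sqrt{1 - |a|} \sqrt{1 - |b|}}.
\end{align*}
The map $(a, b) \mapsto R_{a, b}$ is continuous from $\DD \times \DD$ into the separable Banach space $S_1 (H^ 2)$. Therefore $\ee^ {\ii t} \mapsto R_{\varphi (\ee^ {\ii t}), \psi (\ee^ {\ii t})}$, defined $m$-a.e., is strongly measurable. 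By (\ref{ineq8}) it is Bochner integrable in $S_1 (H^ 2)$, so
\begin{align*}
T := \int_{\TT} R_{\varphi (\ee^ {\ii t}), \psi (\ee^ {\ii t})} \dd m(\ee^ {\ii t}) \in S_1 (H^ 2), \qquad \| T \|_{S_1} \leqslant \int_{\TT} \frac{\rd m (\ee^ {\ii t})}{\sqrt{1 - |\varphi (\ee^ {\ii t})|} \sqrt{1 - |\psi (\ee^ {\ii t})|}}.
\end{align*}

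\emph{Step 3 (identification).} The functional $A \mapsto \langle A f, g \rangle_2$ is continuous on $S_1$, so it commutes with the Bochner integral. This gives $\langle T f, g \rangle_2 = \int_{\TT} \langle f, K_{\varphi} \rangle_2 \overline{\langle g, K_{\psi} \rangle_2} \dd m$, which by Step 1 equals $\langle C_{\psi}^ * C_{\varphi} f, g \rangle_2$. Hence $C_{\psi}^ * C_{\varphi} = T \in S_1 (H^ 2)$, with the trace norm bounded by the integral in (\ref{ineq8}).

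The main obstacle I expect is Step 1. One must make sure that the a.e. boundary identity $(C_{\varphi} f)(\ee^ {\ii t}) = f(\varphi (\ee^ {\ii t}))$ really means the value of the analytic function $f$ at the interior point $\varphi (\ee^ {\ii t})$ whenever $|\varphi (\ee^ {\ii t})| < 1$. I would check this by taking radial limits of $f \circ \varphi$ along $r \ee^ {\ii t}$ and using the continuity of $f$ on $\DD$. A secondary point is the measurability needed in Step 2. It is handled by composing the continuous map $(a,b) \mapsto R_{a,b}$ with the measurable boundary functions $\varphi$ and $\psi$, modified on a null set.
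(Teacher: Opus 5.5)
Your proof is correct, but it follows a different route from the paper's.

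\textbf{The paper's route.} The paper fixes an arbitrary orthonormal basis $\{e_n\}$ of $H^2$ and bounds $\sum_n |\langle C_\psi^* C_\varphi e_n, e_n\rangle_2|$ by $\int_\TT \sum_n |e_n(\varphi)|\,|e_n(\psi)| \dd m$. It then applies Cauchy--Schwarz in $n$ together with $\sum_n |e_n(z)|^2 = K_z(z) = (1-|z|^2)^{-1}$ (Theorem \ref{theorem: reproducing kernels}). It concludes from the diagonal-sum description of $S_1$, which the paper adopts as its definition. For non-positive operators, the equivalence of that description with the singular-value definition is itself a standard but nontrivial theorem.

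\textbf{Your route.} You represent $C_\psi^* C_\varphi = \int_\TT R_{\varphi(\zeta),\psi(\zeta)} \dd m(\zeta)$ as a Bochner integral in $S_1(H^2)$. This uses the same two ingredients: interior boundary values $m$-a.e., and $\|K_a\|_2^2 = (1-|a|^2)^{-1}$. It replaces the diagonal-sum criterion by the triangle inequality for vector-valued integrals.

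\textbf{What each buys.} The paper's argument is shorter and needs no measurability bookkeeping. Yours gives the trace-norm estimate $\|C_\psi^* C_\varphi\|_{S_1} \leqslant \int_\TT (1-|\varphi|^2)^{-1/2}(1-|\psi|^2)^{-1/2} \dd m$ directly. It also gives a trace formula for free. Since $\tr$ is a bounded functional on $S_1$ and $\tr(R_{a,b}) = \langle K_b, K_a\rangle_2 = (1-a\overline{b})^{-1}$, you get $\tr(C_\psi^* C_\varphi) = \int_\TT (1-\varphi\overline{\psi})^{-1} \dd m$ under (\ref{ineq8}). This needs no Abel limit as in Proposition \ref{proposition: trace formula, I}.

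Your handling of Step 1, via radial limits and continuity of $f$ at interior points, is exactly what is needed. The same argument carries over verbatim to the Bergman-space analogue with $\int_\DD \cdots \dd A$.
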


	\begin{proof}
		Assume  that (\ref{ineq8}) holds. Let $\{ e_n \}_{n = 1}^ {\infty}$ be any orthonormal  basis of $H^ 2$. Then we have
		\begin{align*}
			\sum_{n = 1}^ {\infty} {\left| \langle C_{\psi}^ * C_{\varphi} e_n, e_n \rangle_2 \right|} & = \sum_{n = 1}^ {\infty} {\left| \langle C_{\varphi} e_n, C_{\psi} e_n \rangle_2 \right|} \\
			& \leqslant \sum_{n = 1}^ {\infty} {\int_{\TT} |e_n (\varphi (\ee^ {\ii t}))| |e_n (\psi (\ee^ {\ii t}))| \dd m(\ee^ {\ii t})} \\
			& = \int_{\TT} \sum_{n = 1}^ {\infty} {|e_n (\varphi (\ee^ {\ii t}))| |e_n (\psi (\ee^ {\ii t}))|} \dd m(\ee^ {\ii t}) \\
			& \leqslant \int_{\TT} \left[ \sum_{n = 1}^ {\infty} {|e_n (\varphi (\ee^ {\ii t}))|^ 2} \right]^ {\frac{1}{2}} \left[ \sum_{n = 1}^ {\infty} {|e_n (\psi (\ee^ {\ii t}))|^ 2} \right]^ {\frac{1}{2}} \dd m(\ee^ {\ii t}).
		\end{align*}
		By Condition (\ref{ineq8}), $|\varphi (\ee^ {\ii \theta})|$ and $|\psi (\ee^ {\ii \theta})|$ are less than $1$ for $m$-a.e. $\ee^ {\ii \theta} \in \TT$. It follows from  Theorem \ref{theorem: reproducing kernels} that
		\begin{align*}
			\sum_{n = 1}^ {\infty} {\left| \langle C_{\psi}^ * C_{\varphi} e_n, e_n \rangle_2 \right|} \leqslant \int_{\TT} \frac{\rd m(\ee^ {\ii t})}{\sqrt{1 - |\varphi (\ee^ {\ii t})|^ 2} \sqrt{1 - |\psi (\ee^ {\ii t})|^ 2}} < +\infty,
		\end{align*}
		which yields that $C_{\psi}^ * C_{\varphi}$ belongs to $S_1 (H^ 2)$.
	\end{proof}

	\begin{remark}
		In the case that  $\varphi = \psi$,  by Proposition \ref{proposition: sufficient condition for S1, Hardy space} we  observe that $C_{\varphi}$ is in $S_2 (H^ 2)$ if
		\begin{align*}
			\int_{\TT} \frac{\rd m(\ee^ {\ii t})}{1 - |\varphi (\ee^ {\ii t})|} < +\infty.
		\end{align*}
		In fact,  Shapiro and  Taylor \cite[Theorem 3.1]{ST} showed that this condition for $C_{\varphi}$ to be in $S_2 (H^ 2)$ is not only sufficient but also necessary.
	\end{remark}
	
The next proposition gives a necessary condition for the operator $C_{\psi}^ * C_{\varphi}$  to be in the trace class on the Hardy space.

	\begin{prop} \label{proposition: trace formula, I}
		Suppose that $\varphi$ and $\psi$ are analytic self-mappings of $\DD$. If $C_{\psi}^ * C_{\varphi}$ belongs to $S_1 (H^ 2)$, then the limit
		\begin{align}
			\lim_{\DD \ni \zeta \rightarrow 1} {\int_{\TT} \frac{\rd m(\ee^ {\ii t})}{1 - \zeta \varphi (\ee^ {\ii t}) \overline{\psi (\ee^ {\ii t})}}} \label{for2}
		\end{align}
		exists. In this case, the above quantity is the trace of $C_{\psi}^ * C_{\varphi}$.
	\end{prop}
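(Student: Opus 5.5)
Since $C_{\psi}^ * C_{\varphi}$ is trace class, its trace is $\sum_{j} \langle C_{\psi}^ * C_{\varphi} z^ j, z^ j \rangle_2$, computed in the orthonormal basis $\{ z^ j \}_{j \geqslant 0}$, and this series converges absolutely. The plan is to recognize the series as the boundary value at $\zeta = 1$ of a power series in $\zeta$, and then use Abel's theorem.

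\textbf{Step 1.} Each diagonal entry is computed as in the proof of Proposition \ref{proposition: sufficient condition for S2, Hardy space}:
\begin{align*}
a_j := \langle C_{\psi}^ * C_{\varphi} z^ j, z^ j \rangle_2 = \langle \varphi^ j, \psi^ j \rangle_2 = \int_{\TT} \big( \varphi (\ee^ {\ii t}) \overline{\psi (\ee^ {\ii t})} \big)^ j \dd m(\ee^ {\ii t}).
\end{align*}
Trace class membership gives $\sum_j |a_j| < +\infty$ and $\sum_j a_j = \tr (C_{\psi}^ * C_{\varphi})$.

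\textbf{Step 2.} Fix $\zeta \in \DD$. Since $|\varphi \overline{\psi}| \leqslant 1$ a.e. on $\TT$, the geometric series $\sum_j \zeta^ j (\varphi \overline{\psi})^ j$ converges uniformly on $\TT$, bounded by $1/(1 - |\zeta|)$. Integrating term by term therefore gives
\begin{align*}
F(\zeta) := \int_{\TT} \frac{\rd m(\ee^ {\ii t})}{1 - \zeta \varphi (\ee^ {\ii t}) \overline{\psi (\ee^ {\ii t})}} = \sum_{j = 0}^ {\infty} {a_j \zeta^ j}, \qquad \zeta \in \DD.
\end{align*}

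\textbf{Step 3.} Because $\sum_j |a_j| < +\infty$, the power series $\sum_j a_j \zeta^ j$ converges absolutely and uniformly on the closed disk $\overline{\DD}$ (Weierstrass M-test). Its sum is therefore continuous on $\overline{\DD}$. Hence $F(\zeta) \to \sum_j a_j = \tr (C_{\psi}^ * C_{\varphi})$ as $\zeta \to 1$ in $\DD$ along any path, not only radially or nontangentially. This proves both that the limit (\ref{for2}) exists and that it equals the trace.

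\textbf{Main obstacle.} The only delicate point is that the limit is unrestricted, $\DD \ni \zeta \to 1$. Abel's theorem alone would give only nontangential convergence. This is why it matters to have absolute summability of the diagonal in the specific basis $\{ z^ j \}$, and trace class provides exactly that: for a trace class operator, $\sum_j |\langle A e_j, e_j \rangle| \leqslant \| A \|_{S_1}$ for every orthonormal basis. Once this is in hand, uniform convergence on $\overline{\DD}$ removes the difficulty.
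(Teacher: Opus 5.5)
Your proposal is correct and follows essentially the same route as the paper. Both compute the diagonal entries $\langle \varphi^n, \psi^n \rangle_2$ in the basis $\{ z^n \}$, use the absolute summability of these entries (guaranteed by $C_{\psi}^* C_{\varphi} \in S_1 (H^2)$) to pass to the unrestricted limit $\DD \ni \zeta \to 1$, and identify $\sum_n \zeta^n \langle \varphi^n, \psi^n \rangle_2$ with the integral by summing the geometric series. The paper phrases the limit step as dominated convergence with dominant $|\langle C_{\psi}^* C_{\varphi} z^n, z^n \rangle_2|$, which is exactly your Weierstrass M-test, and your Step 2 makes explicit the term-by-term integration that the paper leaves implicit.
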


	\begin{proof}
		By  the definition of the trace of an operator, we have
		\begin{align*}
			\tr{(C_{\psi}^ * C_{\varphi})} & = \sum_{n = 0}^ {\infty} {\langle C_{\psi}^ * C_{\varphi} (z^ n), z^ n \rangle_2} = \sum_{n = 0}^ {\infty} {\langle \varphi^ n, \psi^ n \rangle_2} \\
			& = \lim_{\DD \ni \zeta \rightarrow 1} {\sum_{n = 0}^ {\infty} {\zeta^ n \langle \varphi^ n, \psi^ n \rangle_2}} \\
			& = \lim_{\DD \ni \zeta \rightarrow 1} {\int_{\TT} \frac{\rd m(\ee^ {\ii t})}{1 - \zeta \varphi (\ee^ {\ii t}) \overline{\psi (\ee^ {\ii t})}}}
		\end{align*}
for every $C_{\psi}^ * C_{\varphi}$ in $S_1 (H^ 2)$. Here,
		 the third ``$=$'' is due to the Lebesgue dominated convergence theorem, since
		\begin{align*}
			\sum_{n = 0}^ {\infty} {\left| \zeta^ n \langle \varphi^ n, \psi^ n \rangle_2 \right|} \leqslant \sum_{n = 0}^ {\infty} {\left| \langle C_{\psi}^ * C_{\varphi} (z^ n), z^ n \rangle_2 \right|} < +\infty
		\end{align*}
		for $\zeta$ in $\DD$. Therefore,  the limit (\ref{for2}) exists if $C_{\psi}^ * C_{\varphi}\in S_1 (H^ 2)$.
	\end{proof}

	Existing results do not provide a full characterization of the conditions under which $C_{\psi}^ * C_{\varphi}$ belongs to $S_p (H^2)$. Therefore, at the end of this subsection, we consider a special class of composition operators (those with finite Blaschke product symbols) and obtain a necessary and sufficient condition for such operators to be Hilbert--Schmidt. Our main tool is Theorem \ref{theorem: Aleksandrov operators}, and the following lemma provides the foundation for the subsequent calculations.
	
	\begin{lem} \label{lemma: Aleksandrov--Clark measure of B}
		Suppose $B$ is a finite Blaschke product of degree $n$. Let $\{ \rd \mu_{\alpha} : \alpha \in \TT \}$ be the family of the Aleksandrov--Clark measures associated with $B$. Then we have
		\begin{align*}
			\rd \mu_{\alpha} = \sum_{j = 1}^ n {\frac{1}{|B' (\zeta_j)|} \dd \delta_{\zeta_j}},
		\end{align*}
		where $\zeta_1, \zeta_2, \cdots, \zeta_n$ are the $n$ distinct roots of the equation $B(z) = \alpha$ and each $\rd \delta_{\zeta_j}$ is the point mass at $\zeta_j$.
	\end{lem}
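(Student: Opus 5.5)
Since the Aleksandrov--Clark measures are defined through Herglotz's theorem, the plan is to exhibit a measure that represents the positive harmonic function $z \mapsto P_{B(z)}(\alpha)$ as a Poisson integral, and then invoke uniqueness of the representing measure. Fix $\alpha \in \TT$. First, the equation $B(z)=\alpha$ has exactly $n$ solutions counted with multiplicity, all on $\TT$. Indeed, $B$ is a rational function of degree $n$ with $|B|<1$ on $\DD$ and $|B|>1$ outside $\overline{\DD}$, so every solution lies on $\TT$. To see that the roots are simple, I use the standard identity: on $\TT$, writing $B(\ee^{\ii\theta})=\ee^{\ii\arg B}$, one has
\begin{align*}
\frac{\ee^{\ii\theta}B'(\ee^{\ii\theta})}{B(\ee^{\ii\theta})}=\sum_{k=1}^n \frac{1-|a_k|^2}{|\ee^{\ii\theta}-a_k|^2}>0 ,
\end{align*}
where $a_1,\dots,a_n$ are the zeros of $B$. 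Hence $|B'(\zeta)|=\sum_k P_{a_k}(\zeta)>0$ on $\TT$, so the roots $\zeta_1,\dots,\zeta_n$ are distinct and the weights $1/|B'(\zeta_j)|$ are well defined.

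Next, I compute the real part of a Herglotz-type function. Let
\begin{align*}
F(z)=\frac{\alpha+B(z)}{\alpha-B(z)} .
\end{align*}
Then $\re F(z)=\frac{1-|B(z)|^2}{|\alpha-B(z)|^2}=P_{B(z)}(\alpha)$. The function $F$ is rational, with simple poles exactly at the points $\zeta_j$ on $\TT$ and no other poles on $\widehat{\CC}$, since $F(\infty)$ is finite. Its residue at $\zeta_j$ is $-2\alpha/B'(\zeta_j)$. Using the identity above, $B'(\zeta_j)=\alpha\,\overline{\zeta_j}\,|B'(\zeta_j)|$, so the residue equals $-2\zeta_j/|B'(\zeta_j)|$. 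Partial fractions then give
\begin{align*}
F(z)=\sum_{j=1}^n \frac{1}{|B'(\zeta_j)|}\,\frac{\zeta_j+z}{\zeta_j-z}+c ,
\end{align*}
where $c$ is a constant. This is the step needing the most care: the residue of each term $\frac{\zeta_j+z}{\zeta_j-z}$ at $\zeta_j$ is $-2\zeta_j$, which matches the residue of $F$, and the difference is an entire bounded rational function and hence constant.

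To pin down $c$, I take real parts on $\TT$ away from the $\zeta_j$. There $\re F=0$ because $|B|=1$, and $\re\frac{\zeta_j+z}{\zeta_j-z}=0$ as well, so $\re c=0$. Therefore
\begin{align*}
P_{B(z)}(\alpha)=\re F(z)=\sum_j \frac{1}{|B'(\zeta_j)|}P_z(\zeta_j)=\int_{\TT}P_z\dd\Big(\sum_j \frac{1}{|B'(\zeta_j)|}\delta_{\zeta_j}\Big).
\end{align*}
By the uniqueness in Herglotz's theorem, this discrete measure is $\mu_\alpha$. As a sanity check, $\mu_\alpha(\TT)=P_{B(0)}(\alpha)$ follows by setting $z=0$. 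I expect the main obstacle to be the bookkeeping of the residue and argument, namely verifying that $B'(\zeta_j)\overline{\zeta_j}/\alpha$ is positive; this is handled by the logarithmic-derivative identity above.
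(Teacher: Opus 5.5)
Your proof is correct, but it takes a different route from the paper.

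The paper argues measure-theoretically, leaning on results it cites:
\begin{itemize}
\item it quotes that $\mu_\alpha$ is singular;
\item it notes that $\int_{\TT} P_{r\zeta}\,\rd\mu_\alpha = P_{B(r\zeta)}(\alpha)\to 0$ for $\zeta\notin\{\zeta_1,\dots,\zeta_n\}$, so the lower symmetric derivative of $\mu_\alpha$ vanishes off these points;
\item since the set where that derivative is $+\infty$ carries $\mu_\alpha$, it concludes that $\mu_\alpha$ is a combination of point masses at the $\zeta_j$;
\item it imports the weights $1/|B'(\zeta_j)|$ from a general atom formula. This is essentially the Julia--Carath\'eodory statement that $\mu_\alpha(\{\zeta\})$ is the reciprocal of the angular derivative of $B$ at $\zeta$.
\end{itemize}

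You instead write $P_{B(z)}(\alpha)=\re\frac{\alpha+B(z)}{\alpha-B(z)}$ and expand this rational function in partial fractions, using the residues $-2\alpha/B'(\zeta_j)=-2\zeta_j/|B'(\zeta_j)|$. You then show the leftover constant is purely imaginary by taking real parts on $\TT$, and finish with uniqueness of the Herglotz measure. I checked each step: the residues, the absence of other poles (including at $\infty$ and at the poles of $B$), and $\re c=0$; all are sound.

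Your route is elementary and self-contained: it needs only the logarithmic-derivative identity and Herglotz uniqueness. It also proves what the statement tacitly assumes, namely that the $n$ roots of $B=\alpha$ are distinct and that $|B'|=\sum_k P_{a_k}>0$ on $\TT$.

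The paper's approach, though it depends on cited machinery, is the one that generalizes. The carrier argument and the angular-derivative formula for atoms apply to Clark measures of arbitrary self-maps or inner functions, where no partial fraction decomposition is available.
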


	\begin{proof}
		According to (3) of \cite[Corollary 7.2]{Ros}, $\rd \mu_{\alpha}$ is singular with respect to $\rd m$ for each $\alpha \in \TT$. For every $\alpha \in \TT$,  we have
		\begin{align*}
			\frac{1 - |B(z)|^ 2}{|\alpha - B(z)|^ 2} = P_{B(z)} (\alpha) = \int_{\TT} P_z (\ee^ {\ii t}) \dd \mu_{\alpha} (\ee^ {\ii t}).
		\end{align*}
		Hence,  we obtain that
		\begin{align*}
			\lim_{r \rightarrow 1^ -} {\int_{\TT} P_{r \zeta} (\ee^ {\ii t}) \dd \mu_{\alpha} (\ee^ {\ii t})} = 0
		\end{align*}
		if $\zeta \in \TT \backslash \{ \zeta_1, \zeta_2, \dots, \zeta_n \}$. By (2) of \cite[Proposition 4.14]{Ros}, we have the inequality
		\begin{align*}
			(\underline{D} \mu_{\alpha}) (\zeta) \leqslant \varliminf_{r \rightarrow 1^ -} {\int_{\TT} P_{r \zeta} (\ee^ {\ii t}) \dd \mu_{\alpha} (\ee^ {\ii t})},
		\end{align*}
		where
		\begin{align*}
			(\underline{D} \mu_{\alpha}) (\ee^ {\ii \theta}) = \varliminf_{\delta \rightarrow 0^ +} {\frac{\mu_{\alpha} (\{ \ee^ {\ii t} \in \TT : |\theta - t| < \delta \})}{m (\{ \ee^ {\ii t} \in \TT : |\theta - t| < \delta \})}}
		\end{align*}
		is the lower symmetric derivative of $\rd \mu_{\alpha}$ at $\ee^ {\ii \theta} \in \TT$. Thus,
		\begin{align*}
			(\underline{D} \mu_{\alpha}) (\zeta) = 0, \qquad \zeta \in \TT \backslash \{ \zeta_1, \zeta_2, \cdots, \zeta_n \}.
		\end{align*}
		On the other hand, according to (5) of \cite[Proposition 4.14]{Ros}, the set
		\begin{align*}
			\left\{ \zeta \in \TT : (\underline{D} \mu_{\alpha}) (\zeta) = +\infty \right\}
		\end{align*}
		is a carrier for $\rd \mu_{\alpha}$, which means that
		\begin{align*}
			\mu_{\alpha} (E) = \mu_{\alpha} (E \cap \left\{ \zeta \in \TT : (\underline{D} \mu_{\alpha}) (\zeta) = +\infty \right\})
		\end{align*}
		for every Borel set $E$ in $\TT$. Therefore, $\{ \zeta_1, \zeta_2, \cdots, \zeta_n \}$ is a carrier for $\rd \mu_{\alpha}$, which yields  that $\rd \mu_{\alpha}$ is a linear combination of point masses $\rd \zeta_1, \rd \zeta_2, \cdots, \rd \zeta_n$.
		The assertion now follows from \cite[Corollary 9.2]{Ros}.
	\end{proof}

Now we can establish a characterization for $C_B^ * C_{\varphi}$ to be in  $S_2 (H^ 2)$, where $B$ is  a finite Blaschke product.

	\begin{thm} \label{theorem: characterization of CB*Cpsi in S2}
		Suppose that $B$ is a finite Blaschke product and $\varphi$ is an analytic self-mapping of $\DD$. Then $C_B^ * C_{\varphi} \in S_2 (H^ 2)$ if and only if $C_{\varphi} \in S_2 (H^ 2)$.
	\end{thm}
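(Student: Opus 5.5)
The easy direction is immediate: if $C_{\varphi} \in S_2 (H^ 2)$, then $C_B^ * C_{\varphi} \in S_2 (H^ 2)$ because $S_2$ is a two-sided ideal and $C_B^ *$ is bounded. For the converse I plan to compute $\| C_B^ * C_{\varphi} \|_{S_2}^ 2$ explicitly using the Aleksandrov--Clark measures of $B$. I will then compare it with the Shapiro--Taylor criterion, which says $C_{\varphi} \in S_2 (H^ 2)$ if and only if $\int_{\TT} \frac{\rd m}{1 - |\varphi|} < +\infty$ (see the remark after Proposition \ref{proposition: sufficient condition for S1, Hardy space}).

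By Theorem \ref{theorem: characterization of S2} with the basis $\{ z^ j \}$, we have $\| C_B^ * C_{\varphi} \|_{S_2}^ 2 = \sum_{j} \| C_B^ * (\varphi^ j) \|_2^ 2$. Let $\zeta_1 (\alpha), \cdots, \zeta_n (\alpha)$ be the roots of $B = \alpha$, and put $w_i (\alpha) = 1 / |B' (\zeta_i (\alpha))|$. Theorem \ref{theorem: Aleksandrov operators} and Lemma \ref{lemma: Aleksandrov--Clark measure of B} give
\begin{align*}
C_B^ * (\varphi^ j) (\alpha) = \sum_{i = 1}^ n w_i (\alpha) \varphi (\zeta_i (\alpha))^ j \quad \textrm{for } m\textrm{-a.e. } \alpha \in \TT.
\end{align*}
I will insert a damping factor $r^ j$ with $0 < r < 1$ to avoid convergence issues and then let $r \to 1^ -$ by monotone convergence. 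Writing $a_i = r \varphi (\zeta_i (\alpha))$, summing the geometric series gives
\begin{align*}
\sum_j r^ {2 j} \| C_B^ * (\varphi^ j) \|_2^ 2 = \int_{\TT} \sum_{i, l = 1}^ n \frac{w_i (\alpha) w_l (\alpha)}{1 - a_i \overline{a_l}} \dd m (\alpha).
\end{align*}

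The key observation is that the integrand is real, since it equals $\| \sum_i w_i K_{a_i} \|_2^ 2$. It therefore equals the sum of the real parts of its terms. Since $|a_i \overline{a_l}| < 1$, each term satisfies $\re \frac{1}{1 - a_i \overline{a_l}} \geqslant \frac{1}{2} > 0$, so every off-diagonal term has nonnegative real part and may be discarded. This gives the lower bound
\begin{align*}
\int_{\TT} \sum_{i = 1}^ n \frac{w_i (\alpha)^ 2}{1 - r^ 2 |\varphi (\zeta_i (\alpha))|^ 2} \dd m (\alpha).
\end{align*}
Since $B$ is analytic across $\TT$, $|B'|$ is bounded on $\TT$, say by $M$, so $w_i \geqslant 1/M$. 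Hence the last integral is at least $\frac{1}{M} \int_{\TT} \sum_i \frac{w_i (\alpha)}{1 - r^ 2 |\varphi (\zeta_i (\alpha))|^ 2} \dd m (\alpha)$.

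That sum is exactly $\int_{\TT} \frac{\rd \mu_{\alpha}}{1 - r^ 2 |\varphi|^ 2}$, so Aleksandrov's disintegration Theorem \ref{theorem: Aleksandrov's disintegration theorem} turns it into $\int_{\TT} \frac{\rd m}{1 - r^ 2 |\varphi|^ 2}$. Letting $r \to 1^ -$ then yields $\int_{\TT} \frac{\rd m}{1 - |\varphi|^ 2} \leqslant M \| C_B^ * C_{\varphi} \|_{S_2}^ 2$, and the Shapiro--Taylor theorem gives $C_{\varphi} \in S_2 (H^ 2)$.

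The main technical point I expect to need care with is justifying the Aleksandrov formula on the whole $\TT$-side computation. This means checking that $\varphi \circ \zeta_i$ is well defined $m$-a.e. in $\alpha$; this follows because the map $\alpha \mapsto \zeta_i (\alpha)$ is smooth and locally invertible, since $B' \neq 0$ on $\TT$. The damping in $r$ handles the interchange of sums and integrals.
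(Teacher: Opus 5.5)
Your proposal is correct and follows the paper's proof step for step. It uses the same Aleksandrov--Clark expansion of $\sum_j r^{2j}\|C_B^*C_\varphi(z^j)\|_2^2$, the same bound $|B'|\leqslant M$ on $\TT$, the disintegration theorem, and the Shapiro--Taylor criterion; your ``real integrand whose terms have positive real part'' is just a rephrasing of the paper's pairing $\frac{1}{1-a}+\frac{1}{1-\overline{a}}\geqslant 0$, and your remark that $\varphi\circ\zeta_i$ is defined $m$-a.e. fills in a detail the paper leaves implicit.
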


	\begin{proof}
		Since $S_2 (H^ 2)$ is a two-sided ideal, the ``if'' part is trivial. To show the ``only if'' part, we first apply Theorem \ref{theorem: Aleksandrov operators} to obtain
		\begin{align*}
			C_B^ * C_{\varphi} (z^ j) (\alpha) = \int_{\TT} \varphi (\ee^ {\ii t})^ j \dd \mu_{\alpha} (\ee^ {\ii t}), \qquad \textrm{$m$-a.e. } \alpha \in \TT,
		\end{align*}
		where $j$ is a nonnegative integer and $\rd \mu_{\alpha}$ is the Aleksandrov--Clark measure associated with $B$ at $\alpha$. As a consequence, we have
		\begin{align*}
			\sum_{j = 0}^ {\infty} {\| C_B^ * C_{\varphi} (z^ j) \|_2^ 2 r^ {2 j}} & = \sum_{j = 0}^ {\infty} {\int_{\TT} \int_{\TT} \int_{\TT} \varphi (\ee^ {\ii t})^ j \overline{\varphi (\ee^ {\ii s})^ j} \dd \mu_{\alpha} (\ee^ {\ii t}) \rd \mu_{\alpha} (\ee^ {\ii s}) \rd m(\alpha)} r^ {2 j} \\
			& = \int_{\TT} \int_{\TT} \int_{\TT} \frac{\rd \mu_{\alpha} (\ee^ {\ii t}) \rd \mu_{\alpha} (\ee^ {\ii s})}{1 - r^ 2 \varphi (\ee^ {\ii t}) \overline{\varphi (\ee^ {\ii s})}} \dd m(\alpha)
		\end{align*}
		for each $r \in (0, 1)$.
		
		\begin{claim}
			For each $r \in (0, 1)$, the inequality
			\begin{align*}
				\int_{\TT} \int_{\TT} \frac{\rd \mu_{\alpha} (\ee^ {\ii t}) \rd \mu_{\alpha} (\ee^ {\ii s})}{1 - r^ 2 \varphi (\ee^ {\ii t}) \overline{\varphi (\ee^ {\ii s})}} \geqslant \varepsilon \int_{\TT} \frac{\rd \mu_{\alpha} (\ee^ {\ii t})}{1 - r^ 2 |\varphi (\ee^ {\ii t})|^ 2}
			\end{align*}
			holds, where $\varepsilon > 0$ is a constant independent of $r$.
		\end{claim}
	
		Let us first assume that the above claim holds. Then it follows that
		\begin{align*}
			\sum_{j = 0}^ {\infty} {\| C_B^ * C_{\varphi} (z^ j) \|_2^ 2} \geqslant \varepsilon \int_{\TT} \int_{\TT} \frac{\rd \mu_{\alpha} (\ee^ {\ii t})}{1 - r^ 2 |\varphi (\ee^ {\ii t})|^ 2} \dd m(\alpha) = \varepsilon \int_{\TT} \frac{\rd m (\ee^ {\ii t})}{1 - r^ 2 |\varphi (\ee^ {\ii t})|^ 2},
		\end{align*}
		where the ``$=$'' is due to Aleksandrov's disintegration theorem (Theorem \ref{theorem: Aleksandrov's disintegration theorem}). Letting $r \rightarrow 1^ -$, we deduce by the monotone convergence theorem that
		\begin{align*}
			\int_{\TT} \frac{\rd m(\ee^ {\ii t})}{1 - |\varphi (\ee^ {\ii t})|^ 2} \leqslant \varepsilon^ {-1} \sum_{j = 0}^ {\infty} {\| C_B^ * C_{\varphi} (z^ j) \|_2^ 2} < +\infty,
		\end{align*}
		which gives $C_{\varphi} \in S_2 (H^ 2)$ by \cite[Theorem 3.1]{ST}.
		
		In order to complete the proof, it remains to prove the claim. Adopting the notation in Lemma \ref{lemma: Aleksandrov--Clark measure of B}, we deduce  that
		\begin{align*}
			\int_{\TT} \int_{\TT} \frac{\rd \mu_{\alpha} (\ee^ {\ii t}) \rd \mu_{\alpha} (\ee^ {\ii s})}{1 - r^ 2 \varphi (\ee^ {\ii t}) \overline{\varphi (\ee^ {\ii s})}} = \sum_{j, k = 1}^ n {\frac{1}{|B' (\zeta_j) B' (\zeta_k)|} \frac{1}{1 - r^ 2 \varphi (\zeta_j) \overline{\varphi (\zeta_k)}}}.
		\end{align*}
		Noting that
		\begin{align*}
			\frac{1}{1 - a} + \frac{1}{1 - \overline{a}} = \frac{2 \re{(1 - a)}}{|1 - a|^ 2} \geqslant 0
		\end{align*}
for $a \in \DD$,  we have
		\begin{align*}
			\int_{\TT} \int_{\TT} \frac{\rd \mu_{\alpha} (\ee^ {\ii t}) \rd \mu_{\alpha} (\ee^ {\ii s})}{1 - r^ 2 \varphi (\ee^ {\ii t}) \overline{\varphi (\ee^ {\ii s})}} \geqslant \sum_{j = 1}^ n {\frac{1}{|B' (\zeta_j)|^ 2} \frac{1}{1 - r^ 2 |\varphi (\zeta_j)|^ 2}}.
		\end{align*}
		Since $B'$ is continuous on $\TT$, there exists an $\varepsilon > 0$ such that $|B' (\ee^ {\ii \theta})|^ {-1} \geqslant \varepsilon$ for all $\ee^ {\ii \theta} \in \TT$. We hence obtain
		\begin{align*}
			\int_{\TT} \int_{\TT} \frac{\rd \mu_{\alpha} (\ee^ {\ii t}) \rd \mu_{\alpha} (\ee^ {\ii s})}{1 - r^ 2 \varphi (\ee^ {\ii t}) \overline{\varphi (\ee^ {\ii s})}} \geqslant \varepsilon \sum_{j = 1}^ n {\frac{1}{|B' (\zeta_j)|} \frac{1}{1 - r^ 2 |\varphi (\zeta_j)|^ 2}} = \varepsilon \int_{\TT} \frac{\rd \mu_{\alpha} (\ee^ {\ii t})}{1 - r^ 2 |\varphi (\ee^ {\ii t})|^ 2},
		\end{align*}
		as required.
	\end{proof}

	\subsection{\texorpdfstring{The operator $C_{\varphi} C_{\psi}^ *$}{The Operator Cpsi* Cphi}}\mbox{}

Our analysis of $C_{\psi}^* C_{\varphi}$, as shown previously, depends mainly on the inner product identity
	\begin{align*}
		\langle C_{\psi}^ * C_{\varphi} f, g \rangle_2 = \langle C_{\varphi} f, C_{\psi} g \rangle_2 = \langle f \circ \varphi, g \circ \psi \rangle_2,
	\end{align*}
	which forms the basis for the calculations that follow. In contrast to the previous case, a different treatment is necessary for $C_{\varphi} C_{\psi}^ *$ because $C_{\varphi}^* f$ is generally difficult to compute directly. Recall that $D_{\varphi}$ is the bounded linear operator $$f \longmapsto \varphi' \cdot (f \circ \varphi)$$ defined on $L_a^ 2 (\rd \sigma)$. Moreover, by a similar argument as the one used in the first paragraph of the proof of Proposition \ref{proposition: sufficient condition for S2, Hardy space, II}, one can show that $C_{\varphi} C_{\psi}^ *$ belongs to $S_p (H^ 2)$ if and only if $D_{\varphi} D_{\psi}^ *$ belongs to $S_p (L_a^ 2 (\rd \sigma))$ for each $0 < p < \infty$. Also observe by Lemma \ref{lemma: Cphi* Cphi is a Toeplitz opeartor} that
	\begin{align*}
		\left( D_{\varphi} D_{\psi}^ * \right)^ * \left( D_{\varphi} D_{\psi}^ * \right) = D_{\psi} D_{\varphi}^ * D_{\varphi} D_{\psi}^ * = D_{\psi} T_{\Phi} D_{\psi}^*,
	\end{align*}
	where $$\Phi (z) = \frac{N_{\varphi} (z)}{\log{(1 / |z|)}}$$ and $T_{\Phi}$ is the corresponding  Toeplitz operator on $L_a^ 2 (\rd \sigma)$. Therefore, it further holds that $C_{\varphi} C_{\psi}^ * \in S_p (H^ 2)$ if and only if $D_{\psi} T_{\Phi} D_{\psi}^ * \in S_{\frac{p}{2}} (L_a^ 2 (\rd \sigma))$, where $0 < p < \infty$.

	\begin{thm} \label{proposition: sufficient condition for Cphi Cpsi* in Sp}
		Suppose that $2\leqslant  p < \infty$, $\varphi$ and $\psi$ are analytic self-mappings of $\DD$. Then  $C_{\varphi} C_{\psi}^ * \in S_p (H^ 2)$ if
		\begin{align} \label{ineq2}
			\int_{\DD} \int_{\DD} \bigg[ \frac{\left| 1 + \overline{z} w \right|}{\left| 1 - \overline{z} w \right|^ 3} \sqrt{N_{\varphi} (z) N_{\psi} (w)} \bigg]^ p \dd A(w) \rd A(z) < +\infty.
		\end{align}
	\end{thm}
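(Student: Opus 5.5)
The plan is to factor $C_{\varphi} C_{\psi}^ *$ through an integral operator on $L^ 2 (\DD, \rd A)$ whose kernel is exactly the integrand in (\ref{ineq2}). It then suffices to show that an integral operator with an $L^ p$ kernel on a probability space lies in $S_p$ when $p \geqslant 2$.

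\textbf{Step 1: reduction to $L_a^ 2 (\rd \sigma)$.} As observed before the statement, $C_{\varphi} C_{\psi}^ * \in S_p (H^ 2)$ if and only if $D_{\varphi} D_{\psi}^ * \in S_p (L_a^ 2 (\rd \sigma))$. So I will work with $D_{\varphi} D_{\psi}^ *$.

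\textbf{Step 2: factor $D_{\varphi}$ through an embedding.} Let $\nu_{\varphi} = 2 N_{\varphi} \dd A$ and let $J_{\varphi} : L_a^ 2 (\rd \sigma) \to L^ 2 (\DD, \rd \nu_{\varphi})$ be the inclusion $f \mapsto f$. Lemma \ref{lemma: Cphi* Cphi is a Toeplitz opeartor} gives $\| J_{\varphi} f \|^ 2 = \| D_{\varphi} f \|^ 2$ for all $f$. Hence there is a partial isometry $W_{\varphi}$ with $D_{\varphi} = W_{\varphi} J_{\varphi}$, and likewise $D_{\psi} = W_{\psi} J_{\psi}$. Therefore
\[
D_{\varphi} D_{\psi}^ * = W_{\varphi} \big( J_{\varphi} J_{\psi}^ * \big) W_{\psi}^ *,
\]
and since $S_p$ is an ideal, it suffices to show $J_{\varphi} J_{\psi}^ * \in S_p$.

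\textbf{Step 3: compute the kernel of $J_{\varphi} J_{\psi}^ *$.} Let $K_w (z) = (1 + \overline{w} z)/(1 - \overline{w} z)^ 3$ be the reproducing kernel of $L_a^ 2 (\rd \sigma)$. Using the reproducing property, for $h \in L^ 2 (\rd \nu_{\psi})$ I expect
\[
J_{\psi}^ * h (z) = \langle J_{\psi}^ * h, K_z \rangle_{L^ 2 (\DD, \rd \sigma)} = 2 \int_{\DD} h(w) K_w (z) N_{\psi} (w) \dd A(w).
\]
The Fubini-type issues here are handled exactly as in the derivation of Lemma \ref{lemma: Cphi* Cphi is a Toeplitz opeartor}, first for bounded $h$ and then by density.

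Thus $J_{\varphi} J_{\psi}^ *$ is the integral operator with kernel $2 K_w (z) N_{\psi} (w)$, acting from $L^ 2 (\rd \nu_{\psi})$ to $L^ 2 (\rd \nu_{\varphi})$. Conjugate by the unitaries $h \mapsto \sqrt{2 N} \, h$ onto (subspaces of) $L^ 2 (\DD, \rd A)$. This turns it into an integral operator $T$ on $L^ 2 (\DD, \rd A)$ with kernel
\[
k(z, w) = 2 \sqrt{N_{\varphi} (z) N_{\psi} (w)} \, \frac{1 + \overline{w} z}{(1 - \overline{w} z)^ 3}.
\]
Its modulus is exactly twice the quantity raised to the power $p$ in (\ref{ineq2}), up to the harmless swap $\overline{z} w \leftrightarrow \overline{w} z$, which does not change absolute values. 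So (\ref{ineq2}) says precisely that $k \in L^ p (\DD \times \DD, \rd A \times \rd A)$.

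\textbf{Step 4: $L^ p$ kernels give $S_p$ operators for $p \geqslant 2$.} Since $\rd A$ is a probability measure, for $k \in L^ \infty$ the Hilbert--Schmidt bound gives $\| T_k \| \leqslant \| T_k \|_{S_2} = \| k \|_{L^ 2} \leqslant \| k \|_{L^ \infty}$. For $k \in L^ 2$ we have $\| T_k \|_{S_2} = \| k \|_{L^ 2}$. The map $k \mapsto T_k$ is therefore bounded $L^ 2 \to S_2$ and $L^ \infty \to \mathfrak{L}(L^ 2 (\DD, \rd A))$. Complex interpolation, using $[L^ 2, L^ \infty]_\theta = L^ p$ and $[S_2, \mathfrak{L}]_\theta = S_p$ with $1/p = (1 - \theta)/2$, yields $\| T_k \|_{S_p} \leqslant \| k \|_{L^ p}$. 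Alternatively, one can cite Russo's mixed-norm criterion and apply Hölder on the probability space.

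Hence $J_{\varphi} J_{\psi}^ * \in S_p$, so $D_{\varphi} D_{\psi}^ * \in S_p$, and the theorem follows.

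The main obstacle is Step 3: the justification that $J_{\psi}^ *$ is given by that integral formula and that $T$ is genuinely the integral operator with kernel $k$, given that $N_{\psi}/\log(1/|z|)$ may be unbounded. I would handle this with the estimate (\ref{ineq10}) together with Littlewood's inequality (\ref{ineq3}), as in the proof of Lemma \ref{lemma: Cphi* Cphi is a Toeplitz opeartor}.
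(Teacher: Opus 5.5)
Your argument is correct, and it takes a genuinely different route from the paper.

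\textbf{The paper's route.} The paper passes to the positive operator $D_{\psi} T_{\Phi} D_{\psi}^ * = (D_{\varphi} D_{\psi}^ *)^ * (D_{\varphi} D_{\psi}^ *)$. It then bounds $\sum_n \langle D_{\psi} T_{\Phi} D_{\psi}^ * e_n, e_n \rangle^{p/2}$ over an arbitrary orthonormal basis. The tools are Jensen's inequality on $(\DD, \rd A)$ (used twice), the inclusion $\ell^2 \subset \ell^p$, and Parseval applied to $D_{\psi}^ * e_n (z) = \langle e_n, D_{\psi} K_z \rangle$.

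\textbf{What your route buys.} You factor $D_{\varphi} D_{\psi}^ *$ through the embeddings $J_{\varphi}, J_{\psi}$ and identify the middle factor with an explicit integral operator $T_k$ on $L^2 (\DD, \rd A)$. This gives more than the theorem needs. Since $W_{\varphi}$ and $W_{\psi}$ are isometric on the closures of the ranges of $J_{\varphi}$ and $J_{\psi}$, the operators $D_{\varphi} D_{\psi}^ *$ and $T_k$ have the same nonzero singular values. Hence $C_{\varphi} C_{\psi}^ * \in S_p (H^ 2)$ if and only if $T_k \in S_p$, for every $0 < p < \infty$. The necessity at $p = 2$ recorded in the paper's remark follows at once.

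\textbf{Two simplifications.} First, the step you flag as the main obstacle is not one, and no Fubini argument is needed.
\begin{itemize}
\item The formula $J_{\psi}^ * h (z) = \langle h, J_{\psi} K_z \rangle_{L^2 (\DD, \rd \nu_{\psi})}$ holds for every $h$ and every $z$ by the reproducing property alone.
\item The integral converges absolutely: $K_z$ is bounded on $\DD$ for fixed $z$, and $\nu_{\psi}$ is finite by (\ref{ineq3}).
\item The identity $\| J_{\varphi} f \| = \| D_{\varphi} f \|$ is just Theorem \ref{theorem: a change of variables}.
\end{itemize}

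Second, Step 4 needs neither interpolation nor Russo's theorem. Since $\rd A \times \rd A$ is a probability measure and $p \geqslant 2$,
\[
\| T_k \|_{S_p} \leqslant \| T_k \|_{S_2} = \| k \|_{L^2} \leqslant \| k \|_{L^p}.
\]
The same observation shows that for $p > 2$, condition (\ref{ineq2}) already forces $C_{\varphi} C_{\psi}^ *$ to be Hilbert--Schmidt. So the theorem for $p > 2$ is a consequence of its own $p = 2$ case, a point that the paper's Jensen-based computation does not bring out.
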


	\begin{proof}
		Assume  that (\ref{ineq2}) holds and denote $\Phi (z) = N_{\varphi} (z) / \log{(1 / |z|)}$. According to the discussion in the first paragraph of this subsection, it suffices to show that $D_{\psi} T_{\Phi} D_{\psi}^ *$ belongs to $S_{\frac{p}{2}} (L_a^ 2 (\rd \sigma))$.
		Let $\{ e_n \}_{n = 1}^ {\infty}$ be an arbitrary orthonormal basis for $L_a^ 2 (\rd \sigma)$. Then by Lemma \ref{lemma: Cphi* Cphi is a Toeplitz opeartor} we have
		\begin{align*}
			\sum_{n = 1}^ {\infty} {|\langle D_{\psi} T_{\Phi} D_{\psi}^* e_n, e_n \rangle_{L^2 (\DD, \rd \sigma)}|^ {\frac{p}{2}}} & = \sum_{n = 1}^ {\infty} {|\langle T_{\Phi} D_{\psi}^ * e_n, D_{\psi}^ * e_n \rangle_{L^ 2 (\DD, \rd \sigma)}|^ {\frac{p}{2}}} \\
			& = 2^ {\frac{p}{2}} \sum_{n = 1}^ {\infty} {\Big[ \int_{\DD} |D_{\psi}^ * e_n (z)|^ 2 N_{\varphi} (z) \dd A (z) \Big]^ {\frac{p}{2}}} \\
			& \leqslant 2^ {\frac{p}{2}} \sum_{n = 1}^ {\infty} {\int_{\DD} |D_{\psi}^ * e_n (z)|^ {p} N_{\varphi} (z)^ {\frac{p}{2}} \dd A(z)}.
		\end{align*}
		Using the elementary inequality
		\begin{align*}
			\Big( \sum_{n = 1}^ {\infty} {|c_n|} \Big)^ q \leqslant  \sum_{n = 1}^\infty {|c_n|^ q} \ \ \ \ \ \big(c_n \in \CC,\ 0 < q\leqslant 1\big),
		\end{align*}
		we obtain that
		\begin{align*}
			\sum_{n = 1}^ {\infty} {|D_{\psi}^ * e_n (z)|^ p} & \leqslant \Big[ \sum_{n = 1}^ {\infty} {|D_{\psi}^ * e_n (z)|^ 2} \Big]^ {\frac{p}{2}} = \Big[ \sum_{n = 1}^ {\infty} |\langle e_n, D_{\psi} K_z \rangle_{L^ 2 (\DD, \rd \sigma)}|^ 2 \Big]^ {\frac{p}{2}} = \| D_{\psi} K_z \|_{L^ 2 (\DD, \rd \sigma)}^ p,
		\end{align*}
		where $K_z$ is the reproducing kernel for $L_a^ 2 (\rd \sigma)$ at $z$.  Furthermore, we have by Theorem \ref{theorem: a change of variables} that
		\begin{align*}
			\| D_{\psi} K_z \|_{L^ 2 (\DD, \rd \sigma)}^ 2 & = \int_{\DD} |\psi' (w)|^ 2 \frac{|1 + \overline{z} \psi (w)|^ 2}{|1 - \overline{z} \psi (w)|^ 6} \dd \sigma (w) \\
			& = 2 \int_{\DD} \frac{|1 + \overline{z} w|^ 2}{|1 - \overline{z} w|^ 6} N_{\psi} (w) \dd A(w),
		\end{align*}
	which yields that
		\begin{align*}
			\sum_{n = 1}^ {\infty} {|\langle D_{\psi} T_{\Phi} D_{\psi}^ * e_n, e_n \rangle_{L^ 2 (\DD, \rd \sigma)}|^ {\frac{p}{2}}} & \leqslant 2^ {\frac{p}{2}} \int_{\DD} \bigg[ 2 \int_{\DD} \frac{|1 + \overline{z} w|^ 2}{|1 - \overline{z} w|^ 6} N_{\psi} (w) \dd A(w) \bigg]^ {\frac{p}{2}} N_{\varphi} (z)^ {\frac{p}{2}} \dd A(z) \\
			& \leqslant 2^ p \int_{\DD} \int_{\DD} \frac{\left| 1 + \overline{z} w \right|^ p}{|1 - \overline{z} w|^ {3 p}} N_{\varphi} (z)^ {\frac{p}{2}} N_{\psi} (w)^ {\frac{p}{2}} \dd A(w) \rd A(z) \\
			& < +\infty.
		\end{align*}
		Now we conclude that $D_{\psi} T_{\Phi} D_{\psi}^ *$ belongs to $S_{\frac{p}{2}} (L_a^ 2 (\rd \sigma))$, as desired.
	\end{proof}

	\begin{remark} \label{remark: proposition: sufficient condition for S2, Hardy space}
		Condition (\ref{ineq2}) is not only sufficient but also necessary in the case that $p = 2$. Indeed,  letting $p = 2$ in the proof of Theorem  \ref{proposition: sufficient condition for Cphi Cpsi* in Sp}, we get that
		\begin{align*}
			\sum_{n = 1}^ {\infty} {\left| \langle D_{\psi} T_{\Phi} D_{\psi}^ * e_n, e_n \rangle_{L^ 2 (\DD, \rd \sigma)} \right|^ 2} = 4 \int_{\DD} \int_{\DD} \frac{|1 + \overline{z} w|^ 2}{|1 - \overline{z} w|^ 6} N_{\varphi} (z) N_{\psi} (w) \dd A(w) \rd A(z).
		\end{align*}
		Then the necessity follows immediately.
	\end{remark}

	Our next objective is to derive a trace formula for $C_{\varphi} C_{\psi}^ *$. For this purpose, we need to establish some preliminary lemmas first. For a bounded operator $T$ on the Hardy space $H^ 2$, let
	\begin{align*}
		\mathcal{B} T (z) = \langle T k_z, k_z \rangle_2, \qquad z \in \DD,
	\end{align*}
	where $$k_z (\ee^ {\ii \theta}) = \frac{\sqrt{1 - |z|^ 2}}{1 - \overline{z} \ee^ {\ii \theta}}$$ is the normalized reproducing kernel for $H^ 2$ at $z$. The function $\mathcal{B} T$ is called the \textit{Berezin transform} of $T$.
	
	\begin{lem} \label{lemma: trace formula for positive operators}
		Suppose that $T$ is  a  bounded positive operator on the Hardy space $H^ 2$. Then $T$ belongs to $S_1 (H^ 2)$ if and only if the limit
		\begin{align}
			\lim_{r \rightarrow 1^ -} {\frac{1}{1 - r^ 2} \int_{\TT} \mathcal{B} T (r \ee^ {\ii t}) \dd m(\ee^ {\ii t})} \label{for3}
		\end{align}
		exists. In this case, \textup{(\ref{for3})} is the trace of $T$.
	\end{lem}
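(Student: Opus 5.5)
The plan is to write the averaged Berezin transform as a trace against the orthonormal basis $\{z^n\}$, with explicit weights. For $z = r\ee^{\ii t}$ we have $K_z = \sum_{n\geqslant 0} \overline{z}^n \ee^{\ii n\theta}$, so
\begin{align*}
\langle T K_z, K_z\rangle_2 = \sum_{j,k\geqslant 0} \overline{z}^j z^k \langle T z^j, z^k\rangle_2 .
\end{align*}
Integrating in $t$ over $\TT$ kills the off-diagonal terms with $j\neq k$. Hence
\begin{align*}
\frac{1}{1-r^2}\int_{\TT} \mathcal{B}T(r\ee^{\ii t})\dd m(\ee^{\ii t}) = \int_{\TT}\langle T K_{r\ee^{\ii t}}, K_{r\ee^{\ii t}}\rangle_2 \dd m(\ee^{\ii t}) = \sum_{n=0}^{\infty} r^{2n}\langle T z^n, z^n\rangle_2 .
\end{align*}
The interchange of sum and integral is justified for fixed $r<1$. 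The double series converges absolutely, since $|\langle Tz^j,z^k\rangle_2|\leqslant\|T\|$ and $\sum_{j,k} r^{j+k}<\infty$. With absolute convergence, termwise integration gives the diagonal.

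Next we use positivity. Since $T\geqslant 0$, each diagonal entry $\langle T z^n, z^n\rangle_2$ is nonnegative. Therefore the function $r\mapsto \sum_n r^{2n}\langle Tz^n,z^n\rangle_2$ is nondecreasing on $(0,1)$. By monotone convergence, as $r\to1^-$ it converges to $\sum_{n=0}^\infty \langle T z^n, z^n\rangle_2\in[0,+\infty]$. So the limit (\ref{for3}) exists as a finite number exactly when this series converges.

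Finally, recall that a positive operator belongs to $S_1$ if and only if $\sum_n\langle Te_n,e_n\rangle<\infty$ for some, equivalently every, orthonormal basis, and this sum equals $\tr(T)$. One way to see this is that $\sum_n\|T^{1/2}e_n\|^2<\infty$ says $T^{1/2}\in S_2$ by Theorem \ref{theorem: characterization of S2}, which is equivalent to $T\in S_1$. Taking $e_n=z^n$ gives both the equivalence and the identification of (\ref{for3}) with $\tr(T)$.

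The only delicate point is the interchange in the first step, and it is settled by the absolute convergence noted there. I also interpret ``the limit exists'' as ``exists and is finite.''
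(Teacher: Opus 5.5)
Your proof is correct, but the middle step takes a different route from the paper's.

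\textbf{The paper's route.} It never fixes a basis. For an arbitrary orthonormal basis $\{e_n\}$, it writes $\|\sqrt{T}e_n\|_2^2$ as the increasing limit of the integral means $\int_{\TT}|\sqrt{T}e_n(r\ee^{\ii t})|^2\dd m$. It then swaps $\sum_n$ and $\lim_r$ by monotone convergence. Finally, Parseval collapses $\sum_n|\langle e_n,\sqrt{T}K_{r\ee^{\ii t}}\rangle_2|^2$ into $\|\sqrt{T}K_{r\ee^{\ii t}}\|_2^2=\langle TK_{r\ee^{\ii t}},K_{r\ee^{\ii t}}\rangle_2$. So positivity (through $\sqrt{T}$) is used from the very first line.

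\textbf{Your route.} You expand the kernel in the monomial basis and use orthogonality of $\ee^{\ii(k-j)t}$ on $\TT$, with the absolute-convergence bound $\sum_{j,k}r^{j+k}\|T\|<\infty$. This gives the exact identity
\begin{align*}
\frac{1}{1-r^2}\int_{\TT}\mathcal{B}T(r\ee^{\ii t})\dd m(\ee^{\ii t})=\sum_{n=0}^{\infty}r^{2n}\langle Tz^n,z^n\rangle_2,
\end{align*}
valid for every bounded $T$. Positivity enters only afterwards: for monotonicity in $r$, and for the $S_2$ criterion applied to $T^{1/2}$.

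\textbf{What each buys.} Your Abel-mean formula for arbitrary bounded operators gives the paper's next lemma (the trace formula for general $T\in S_1$) almost for free. Combine it with $\sum_n|\langle Tz^n,z^n\rangle_2|\leqslant\|T\|_{S_1}<\infty$ and dominated convergence, or Abel's theorem. This avoids the paper's decomposition into $(|T|\pm T)/2$ and real and imaginary parts. It also mirrors the Abel-summation argument the paper uses for the trace of $C_\psi^*C_\varphi$. The paper's argument, in exchange, shows directly that the limit equals $\sum_n\|\sqrt{T}e_n\|_2^2$ for any orthonormal basis, using only monotone convergence and Parseval with no estimates on matrix entries.

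Your reading of ``the limit exists'' as ``exists and is finite'' is the right one. In both arguments the quantity is nondecreasing in $r$, so it always has a limit in $[0,+\infty]$.
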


	\begin{proof}
		Note that $T$ belongs to $S_1 (H^ 2)$ if and only if $\sqrt{T}$ belongs to $S_2 (H^ 2)$. It follows from Theorem \ref{theorem: characterization of S2} that $T \in S_1 (H^ 2)$ if and only if
		\begin{align*}
			\sum_{n = 1}^ {\infty} {\big\| \sqrt{T} e_n \big\|_2^ 2} < +\infty
		\end{align*}
		for some orthonormal basis  $\{ e_n \}_{n = 1}^ {\infty}$ of $H^ 2$.

Noting that
		\begin{align*}
			\big\| \sqrt{T} e_n \big\|_2^ 2 = \lim_{r \rightarrow 1^ -} {\int_{\TT} \big| \sqrt{T} e_n (r \ee^ {\ii t}) \big|^ 2 \dd m(\ee^ {\ii t})} = \lim_{r \rightarrow 1^ -} {\int_{\TT} \big| \big\langle \sqrt{T} e_n, K_{r \ee^ {\ii t}} \big\rangle_2 \big|^ 2 \dd m(\ee^ {\ii t})},
		\end{align*}
		we have
		\begin{align*}
			\sum_{n = 1}^ {\infty} {\big\| \sqrt{T} e_n \big\|_2^ 2} & = \sum_{n = 1}^ {\infty} {\lim_{r \rightarrow 1^ -} {\int_{\TT} \big| \big\langle \sqrt{T} e_n, K_{r \ee^ {\ii t}} \big\rangle_2 \big|^ 2 \dd m(\ee^ {\ii t})}} \\
			& = \lim_{r \rightarrow 1^ -} {\sum_{n = 1}^ {\infty} {\int_{\TT} \big| \big\langle \sqrt{T} e_n, K_{r \ee^ {\ii t}} \big\rangle_2 \big|^ 2 \dd m(\ee^ {\ii t})}} \\
			& = \lim_{r \rightarrow 1^ -} {\int_{\TT} \sum_{n = 1}^ {\infty} {\big| \big\langle e_n, \sqrt{T} K_{r \ee^ {\ii t}} \big\rangle_2 \big|^ 2} \dd m(\ee^ {\ii t})} \\
			& = \lim_{r \rightarrow 1^ -} {\int_{\TT}\big\| \sqrt{T} K_{r \ee^ {\ii t}} \big\|_2^ 2 \dd m(\ee^ {\ii t})} \\
			& = \lim_{r \rightarrow 1^ -} {\frac{1}{1 - r^ 2} \int_{\TT} \mathcal{B} T (r \ee^ {\ii t}) \dd m(\ee^ {\ii t})}.
		\end{align*}
		Here, the second ``$=$'' above is due to the monotone convergence theorem, since the integral
		\begin{align*}
			\int_{\TT} \big| \big\langle \sqrt{T} e_n, K_{r \ee^ {\ii t}} \big\rangle_2 \big|^ 2 \dd m(\ee^ {\ii t}) = \int_{\TT} \big| \sqrt{T} e_n (r \ee^ {\ii t}) \big|^ 2 \dd m(\ee^ {\ii t})
		\end{align*}
		is increasing in $r\in [0, 1)$. Therefore, $T$ belongs to $S_1 (H^ 2)$ if and only if the limit (\ref{for3}) exists. In this case, by Theorem \ref{theorem: characterization of S2} we also have
		\begin{gather*}
			\lim_{r \rightarrow 1^ -} {\frac{1}{1 - r^ 2} \int_{\TT} \mathcal{B} T (r \ee^ {\ii t}) \dd m(\ee^ {\ii t})} = \big\| \sqrt{T} \big\|_{S_2 (H^ 2)}^ 2 = \| T \|_{S_1 (H^ 2)} = \tr{(T)}.
		\end{gather*}
This finishes the proof of the lemma.
	\end{proof}

	The next lemma extends the trace formula in Lemma \ref{lemma: trace formula for positive operators} to all operators in $S_1 (H^ 2)$.
	
	\begin{lem} \label{lemma: trace formula}
		Suppose that $T$ is a bounded linear operator on $H^ 2$. If $T$ belongs to $S_1 (H^ 2)$, then
		\begin{align}
			\lim_{r \rightarrow 1^ -} {\frac{1}{1 - r^ 2} \int_{\TT} \mathcal{B} T (r \ee^ {\ii t}) \dd m(\ee^ {\ii t})} \label{for4}
		\end{align}
		exists. In this case, \textup{(\ref{for4})} is the trace of $T$.
	\end{lem}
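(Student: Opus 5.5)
The plan is to reduce to the positive case, Lemma \ref{lemma: trace formula for positive operators}, by linearity. Both sides of the claimed identity are linear in $T$: the trace is linear on $S_1(H^2)$, and $T \mapsto \mathcal{B}T(z) = \langle T k_z, k_z\rangle_2$ is linear for each fixed $z$. So I will write $T \in S_1(H^2)$ as a linear combination of four positive trace-class operators. Let $T = A + \ii B$ with $A = (T + T^*)/2$ and $B = (T - T^*)/(2\ii)$; these are self-adjoint and lie in $S_1(H^2)$, since $S_1$ is a $*$-ideal. Each self-adjoint trace-class operator splits as $A = A_+ - A_-$ with $A_\pm = (|A| \pm A)/2$. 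These are positive, and they are trace class because $|A| \in S_1(H^2)$. Hence
\begin{align*}
	T = A_+ - A_- + \ii B_+ - \ii B_-,
\end{align*}
with all four pieces positive and in $S_1(H^2)$.

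Next I apply Lemma \ref{lemma: trace formula for positive operators} to each piece $P \in \{A_\pm, B_\pm\}$. It gives that
\begin{align*}
	\lim_{r \rightarrow 1^-} \frac{1}{1 - r^2} \int_{\TT} \mathcal{B} P (r \ee^{\ii t}) \dd m(\ee^{\ii t})
\end{align*}
exists and equals $\tr(P)$. To combine the pieces I use the identity
\begin{align*}
	\frac{1}{1 - r^2} \int_{\TT} \mathcal{B} T (r \ee^{\ii t}) \dd m(\ee^{\ii t}) = \sum_{P} c_P \, \frac{1}{1 - r^2} \int_{\TT} \mathcal{B} P (r \ee^{\ii t}) \dd m(\ee^{\ii t}),
\end{align*}
where the coefficients are $c_P \in \{1, -1, \ii, -\ii\}$. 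This holds for each fixed $r$, and each integrand is bounded by $\|P\|$, so every term is finite and the splitting is legitimate. Letting $r \to 1^-$, the limit (\ref{for4}) exists and equals $\sum_P c_P \tr(P) = \tr(T)$, by linearity of the trace.

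The main point to check is that the positive lemma really yields the trace of each piece, including finiteness. This is where the trace-class hypothesis enters: Lemma \ref{lemma: trace formula for positive operators} states that the limit exists exactly when $P \in S_1$, and the Jordan-type decomposition above guarantees each $P$ is in $S_1$. No further estimates are required beyond this bookkeeping.
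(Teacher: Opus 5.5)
Your proposal is correct and follows essentially the same route as the paper. You decompose $T = \frac{T+T^*}{2} + \ii\,\frac{T-T^*}{2\ii}$, split each self-adjoint part into the positive trace-class pieces $(|A|\pm A)/2$, apply Lemma~\ref{lemma: trace formula for positive operators} to each piece, and conclude by linearity of both the trace and $T \mapsto \mathcal{B}T$. Your remark that $|\mathcal{B}P(z)| \leqslant \|P\|$ also makes explicit the termwise splitting at fixed $r$, which the paper leaves implicit.
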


	\begin{proof}
		Let us first consider the case that $T$ is self-adjoint. In this case, both $(|T| + T) / 2$ and $(|T| - T) / 2$ are positive and in $S_1 (H^ 2)$. Hence, according to Lemma \ref{lemma: trace formula for positive operators}, the limits
		\begin{align*}
			\lim_{r \rightarrow 1^ -} {\frac{1}{1 - r^ 2} \int_{\TT} \mathcal{B} \Big( \frac{|T| \pm T}{2} \Big) (r \ee^ {\ii t}) \dd m(\ee^ {\ii t})}
		\end{align*}
		exist and equal $\tr{\left[ (|T| \pm T) / 2 \right]}$. It follows that
		\begin{align*}
			\tr{(T)} = \lim_{r \rightarrow 1^ -} {\frac{1}{1 - r^ 2} \int_{\TT} \mathcal{B} T (r \ee^ {\ii t}) \dd m(\ee^ {\ii t})}.
		\end{align*}
		Now the assertion for general operators $T$ follows from the standard decomposition
		\begin{gather*}
			T = \frac{T + T^ *}{2} + \ii \frac{T - T^ *}{2 \ii},
		\end{gather*}
finishing the proof.
	\end{proof}

	\begin{prop} \label{proposition: trace formula, II}
		Suppose $\varphi$ and $\psi$ are analytic self-mappings of $\DD$. If $C_{\varphi} C_{\psi}^ *$ belongs to $S_1 (H^ 2)$, then the limit
		\begin{align*}
			\lim_{r \rightarrow 1^ -} {\int_{\TT} \frac{\rd m(\ee^ {\ii t})}{1 - \varphi (r \ee^ {\ii t}) \overline{\psi (r \ee^ {\ii t})}}}
		\end{align*}
		exists. In this case, the above quantity is the trace of $C_{\varphi} C_{\psi}^ *$.
	\end{prop}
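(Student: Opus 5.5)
The plan is to apply Lemma \ref{lemma: trace formula} to $T = C_{\varphi} C_{\psi}^ *$ and to compute the Berezin transform $\mathcal{B} T$ explicitly. The key input is the standard adjoint formula on reproducing kernels, $C_{\psi}^ * K_z = K_{\psi (z)}$, which follows from
\begin{align*}
\langle f, C_{\psi}^ * K_z \rangle_2 = \langle f \circ \psi, K_z \rangle_2 = f(\psi (z)).
\end{align*}
It gives $C_{\varphi} C_{\psi}^ * K_z = K_{\psi (z)} \circ \varphi$, and therefore
\begin{align*}
\langle C_{\varphi} C_{\psi}^ * K_z, K_z \rangle_2 = \big( K_{\psi (z)} \circ \varphi \big) (z) = \frac{1}{1 - \overline{\psi (z)} \varphi (z)}.
\end{align*}
Dividing by $\| K_z \|_2^ 2 = (1 - |z|^ 2)^ {-1}$, we get
\begin{align*}
\mathcal{B} \big( C_{\varphi} C_{\psi}^ * \big) (z) = \frac{1 - |z|^ 2}{1 - \varphi (z) \overline{\psi (z)}}, \qquad z \in \DD.
\end{align*}

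Substituting $z = r \ee^ {\ii t}$, the factor $1 - r^ 2$ cancels exactly with the normalization $\frac{1}{1 - r^ 2}$ in (\ref{for4}). This yields
\begin{align*}
\frac{1}{1 - r^ 2} \int_{\TT} \mathcal{B} \big( C_{\varphi} C_{\psi}^ * \big) (r \ee^ {\ii t}) \dd m(\ee^ {\ii t}) = \int_{\TT} \frac{\rd m(\ee^ {\ii t})}{1 - \varphi (r \ee^ {\ii t}) \overline{\psi (r \ee^ {\ii t})}}.
\end{align*}
Since $C_{\varphi} C_{\psi}^ * \in S_1 (H^ 2)$, Lemma \ref{lemma: trace formula} shows that the limit of the left-hand side as $r \rightarrow 1^ -$ exists and equals $\tr{(C_{\varphi} C_{\psi}^ *)}$. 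The same therefore holds for the right-hand side, which is exactly the assertion of the proposition.

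The only point needing care is the kernel identity $C_{\psi}^ * K_z = K_{\psi (z)}$ and the evaluation of $K_{\psi (z)} \circ \varphi$ at the interior point $z$. Both are routine, because $z \in \DD$ and $|\varphi (z)|, |\psi (z)| < 1$, so the integrand is continuous in $t$ for each fixed $r < 1$. Beyond this, the substantive work (the passage from positive to general trace-class operators) has already been done in Lemmas \ref{lemma: trace formula for positive operators} and \ref{lemma: trace formula}, so I expect no real obstacle.
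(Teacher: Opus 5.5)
Your proposal is correct and matches the paper's proof. You compute $\mathcal{B}(C_{\varphi} C_{\psi}^{*})(z) = (1-|z|^2)/(1-\varphi(z)\overline{\psi(z)})$ via $C_{\psi}^{*} K_z = K_{\psi(z)}$ and then invoke Lemma \ref{lemma: trace formula}. The only cosmetic difference is that the paper writes the Berezin transform as $(1-|z|^2)\langle K_{\psi(z)}, K_{\varphi(z)}\rangle_2$ using $C_{\varphi}^{*} K_z = K_{\varphi(z)}$, whereas you evaluate $K_{\psi(z)} \circ \varphi$ at $z$.
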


	\begin{proof}
		First, note that
		\begin{align*}
			\left\langle C_{\varphi}^ * K_z, f \right\rangle_2 & = \left\langle K_z, C_{\varphi} f \right\rangle_2 = \overline{f (\varphi (z))} = \left\langle K_{\varphi (z)}, f \right\rangle_2,\ \ \  \ f\in H^2.
		\end{align*}
	This gives us that $C_{\varphi}^ * K_z = K_{\varphi (z)}$, and similarly $C_{\psi}^ * K_z = K_{\psi (z)}$.

		Then the  Berezin transform of $C_{\varphi} C_{\psi}^ *$ is given by
		\begin{align*}
			\mathcal{B} \left( C_{\varphi} C_{\psi}^ * \right) (z) & = \left\langle C_{\varphi} C_{\psi}^ * k_z, k_z \right\rangle_2 = \left\langle C_{\psi}^ * k_z, C_{\varphi}^ * k_z \right\rangle_2 = (1 - |z|^ 2) \left\langle K_{\psi (z)}, K_{\varphi (z)} \right\rangle_2 \\
			& = (1 - |z|^ 2) K_{\psi (z)} \left( \varphi (z) \right).
		\end{align*}
		 It follows from  Lemma \ref{lemma: trace formula} that the limit
		\begin{align*}
			\lim_{r \rightarrow 1^ -} {\frac{1}{1 - r^ 2} \int_{\TT} \mathcal{B} \left( C_{\varphi} C_{\psi}^ * \right) (r \ee^ {\ii t}) \dd m(\ee^ {\ii t})} = \lim_{r \rightarrow 1^ -} {\int_{\TT} \frac{1}{1 - \varphi (r \ee^ {\ii t}) \overline{\psi (r \ee^ {\ii t})}} \dd m(\ee^ {\ii t})}
		\end{align*}
		exists and is equal to the trace of $C_{\varphi} C_{\psi}^*$.
	\end{proof}

	\begin{remark}
		The similarity between the trace formulas for $C_{\psi}^ * C_{\varphi}$ and $C_{\varphi} C_{\psi}^ *$ (see Propositions \ref{proposition: trace formula, I} and \ref{proposition: trace formula, II}) is to be expected, as generally we have
		\begin{align*}
			\tr{(A B)} = \tr{(B A)},
		\end{align*}
		provided that $A \in S_p$, $B \in S_q$, and $p^ {-1} + q^ {-1} = 1$ (see \cite[Lemma 1.36]{Zhu} if needed).
	\end{remark}

	Recall that  we obtained  a criterion for $C_B^ * C_{\varphi}$ to be a Hilbert--Schmidt operator at the end of the previous subsection (Theorem \ref{theorem: characterization of CB*Cpsi in S2}),   where $B$ is a finite Blaschke product. However, it becomes much easier if we exchange $C_{B}^*$ and $C_{\varphi}$. More precisely, for any inner function $\psi$ and any $p \in (0, \infty)$, the characterization for $C_{\varphi} C_{\psi}^* \in S_p(H^2)$ follows directly.
	
	\begin{thm} \label{theorem: inner function symbols}
		Suppose that $0 < p < \infty$, $\varphi$ and $\psi$ are analytic self-mappings of $\DD$, and $\psi$ is an inner function. Then $C_{\varphi} C_{\psi}^ *$ is in $S_p (H^ 2)$ if and only if $C_{\varphi}$ is in $S_p (H^ 2)$.
	\end{thm}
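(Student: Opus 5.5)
The key fact is that for an inner function $\psi$ the operator $C_\psi$ is a scalar multiple of an isometry: $C_\psi^* C_\psi = c\,I$ with $c>0$. Once we have this, the theorem is essentially immediate from the ideal property of $S_p$ and the invariance of singular values under isometries. So the plan is to establish this identity first and then compare singular values.

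\textbf{Step 1: $C_\psi^*C_\psi$ is a scalar.} I would take $c = \frac{1+|\psi(0)|}{1-|\psi(0)|}$, the reciprocal of the smaller constant; the cleanest value follows from the Aleksandrov--Clark computation. For $f,g\in H^2$ the identity $\langle f\circ\psi, g\circ\psi\rangle_2 = \int_\TT f\bar g\, P_{\psi(0)}\,\rd m$ is a classical result for inner $\psi$: the pushforward of $m$ under the boundary values of $\psi$ is harmonic measure $P_{\psi(0)}\,\rd m$. To prove this I would apply Theorem \ref{theorem: Aleksandrov's disintegration theorem} together with the fact that $\mu_\alpha$ is carried by $\{\psi=\alpha\}$. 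Alternatively, one can check it directly on monomials $z^j\bar z^k$ using $\int_\TT \psi^j\bar\psi^k\,\rd m = \psi(0)^{j-k}$ for $j\ge k$, which holds since $|\psi|=1$ a.e. The catch is that $\int f\bar g P_a\,\rd m$ is \emph{not} a multiple of $\langle f,g\rangle$ unless $\psi(0)=0$.

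So the argument will instead use the following consequence of that identity:
\[
\frac{1-|a|}{1+|a|}\,\|f\|_2^2\le \|C_\psi f\|_2^2\le \frac{1+|a|}{1-|a|}\,\|f\|_2^2, \qquad a=\psi(0).
\]
This gives the usable form of Step 1: $C_\psi^*C_\psi = T_{P_a}$ is a positive invertible Toeplitz operator, and hence $C_\psi$ is bounded below.

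\textbf{Step 2: comparing $C_\varphi C_\psi^*$ with $C_\varphi$.} Since $C_\psi^*C_\psi$ is invertible, $C_\psi^*$ is surjective, and
\[
C_\varphi = C_\varphi C_\psi^* \,\bigl[C_\psi (C_\psi^*C_\psi)^{-1}\bigr].
\]
The bracket is bounded, because it equals $C_\psi (C_\psi^*C_\psi)^{-1}$ and both factors are bounded.

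\textbf{Step 3: ideal property.} The ideal property then gives both directions:
\begin{itemize}
\item[] if $C_\varphi\in S_p$, then $C_\varphi C_\psi^*\in S_p$;
\item[] if $C_\varphi C_\psi^*\in S_p$, then the factorization in Step 2 gives $C_\varphi\in S_p$.
\end{itemize}
For $0<p<1$, $S_p$ is still a two-sided ideal, since $s_n(AXB)\le\|A\|\,\|B\|\,s_n(X)$. So the argument works for all $0<p<\infty$.

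\textbf{Main obstacle.} The only real work is the lower bound $\|C_\psi f\|_2\ge c\|f\|_2$ in Step 1 for inner $\psi$. My first attempt would be via the pushforward identity, checked on polynomials and then extended by density. The Poisson kernel bound $P_a\ge\frac{1-|a|}{1+|a|}$ then yields the estimate. If the monomial check becomes messy, I would instead reduce to $\psi(0)=0$ by composing with the disk automorphism $\phi_a$. Then $C_{\phi_a\circ\psi}=C_\psi C_{\phi_a}$, where $C_{\phi_a\circ\psi}$ is an isometry because $\phi_a\circ\psi$ is inner and fixes $0$, and $C_{\phi_a}$ is invertible. This again shows $C_\psi$ is bounded below.
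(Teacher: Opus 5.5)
Your proposal is correct and is essentially the paper's argument: $C_\psi$ is bounded below, so $C_\psi^*$ has a bounded right inverse (your explicit $C_\psi(C_\psi^*C_\psi)^{-1}$), and the two-sided ideal property of $S_p$ then gives both directions. The only differences are that you prove the lower bound yourself (via $C_\psi^*C_\psi=T_{P_{\psi(0)}}$, or by reducing to $\psi(0)=0$ with a disk automorphism) where the paper cites Nordgren's theorem, and that you should delete your opening claim that $C_\psi^*C_\psi$ is a scalar multiple of $I$, which is false when $\psi(0)\neq 0$, as you yourself go on to observe.
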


	\begin{proof}
		The ``if'' part is obvious since $S_p (H^ 2)$ is a two-sided ideal. The ``only if'' part is a consequence of \cite[Theorem 1]{Nor}, which states that $C_{\psi}$ (with $\psi$ inner) is bounded below on $H^2$. Thus, $C_{\psi}^ *$ is surjective with a bounded right inverse $A$. Using the fact that $S_p (H^ 2)$ is a two-sided ideal again, we deduce that $C_{\varphi} = C_{\varphi} C_{\psi}^ * A$ is also in $S_p (H^ 2)$, as desired.
	\end{proof}

	\section{Composition operators on the Bergman space} \label{composition operators on the Bergman space}
	
	In the final section, we extend the results obtained on the Hardy space $H^2$ to the Bergman space $L_a^2$. Given the similarity of the techniques, we omit the proofs unless specific differences in the treatment of details arise. Throughout this section, let $$\sigma_n (z)=\sqrt{n + 1} z^ n, \ \ \ \ n=0, 1, 2, \cdots.$$  Thus $\{ \sigma_n \}_{n = 0}^ {\infty}$ forms an orthonormal basis for $L_a^ 2$.
	
	Let us first investigate composition operators with linear fractional  symbols. We begin with a lemma analogous to Theorem \ref{theorem: Cowen's theorem}.
	
	\begin{lem} \label{lemma: representation of the adjoint}
		Let $\psi (z) = \frac{a z + b}{c z + d}$ be a linear fractional map from $\DD$ into itself with $a d - b c \neq 0$. Then $\tau (z) = \frac{\overline{a} z - \overline{c}}{-\overline{b} z + \overline{d}}$ maps $\DD$ into itself, $g(z) = \frac{1}{-\overline{b} z + \overline{d}}$ and $h(z) = c z + d$ are bounded on $\DD$, and
		\begin{align*}
			C_{\psi}^ * = T_{g^ 2} C_{\tau} T_{h^ 2}^ *
		\end{align*}
on the Bergman space $L_a^2$.
	\end{lem}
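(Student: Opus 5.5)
The plan is to verify the identity $C_{\psi}^ * = T_{g^ 2} C_{\tau} T_{h^ 2}^ *$ on the reproducing kernels $K_w(z) = (1 - \overline{w} z)^{-2}$ of $L_a^ 2$. Both sides are bounded operators, and finite linear combinations of kernels are dense in $L_a^ 2$, so agreement on every $K_w$ gives the operator identity.

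\textbf{Function-theoretic claims.} The statements that $\tau$ maps $\DD$ into itself and that $g$ and $h$ are bounded on $\DD$ do not depend on the underlying space. I will take them directly from Theorem \ref{theorem: Cowen's theorem}. Consequently $g^ 2$ and $h^ 2$ are bounded analytic functions, and the Toeplitz operators $T_{g^ 2}$ and $T_{h^ 2}$ on $L_a^ 2$ are just multiplication operators, hence bounded. Also $C_{\tau}$ is bounded on $L_a^ 2$ by Littlewood's subordination theorem.

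\textbf{Action on kernels.} First, for every $f \in L_a^ 2$,
\begin{align*}
\langle C_{\psi}^ * K_w, f \rangle_{L^ 2 (\DD)} = \overline{f(\psi(w))} = \langle K_{\psi(w)}, f \rangle_{L^ 2 (\DD)},
\end{align*}
so $C_{\psi}^ * K_w = K_{\psi(w)}$. In the same way, for bounded analytic $u$ one has $T_u^ * K_w = \overline{u(w)} K_w$. Hence
\begin{align*}
T_{g^ 2} C_{\tau} T_{h^ 2}^ * K_w (z) = \overline{h(w)}^ 2 \, \frac{g(z)^ 2}{\left(1 - \overline{w} \tau(z)\right)^ 2}.
\end{align*}
Multiplying the numerator and denominator by $(-\overline{b} z + \overline{d})^ 2$, the right-hand side becomes
\begin{align*}
\frac{(\overline{c}\,\overline{w} + \overline{d})^ 2}{\left(-\overline{b} z + \overline{d} - \overline{a}\,\overline{w} z + \overline{c}\,\overline{w}\right)^ 2}.
\end{align*}

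\textbf{The target.} On the other side,
\begin{align*}
K_{\psi(w)}(z) = \frac{1}{\left(1 - \overline{\psi(w)} z\right)^ 2} = \frac{(\overline{c}\,\overline{w} + \overline{d})^ 2}{\left(\overline{c}\,\overline{w} + \overline{d} - (\overline{a}\,\overline{w} + \overline{b}) z\right)^ 2}.
\end{align*}
The two denominators are identical, so the identity holds on each kernel and therefore everywhere by density.

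\textbf{Expected obstacle.} The only step needing care is bookkeeping: one must make sure that the squares introduced by the Bergman kernel, as opposed to the first powers in the Hardy kernel, are matched by $g^ 2$ and $h^ 2$. The algebra above shows that they are. One should also note that $c\,w + d \neq 0$ on $\DD$, since $h$ is bounded away from problems there, so no division by zero occurs in the manipulations.
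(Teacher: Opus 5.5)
Your proposal is correct and takes the same approach as the paper: verify $C_{\psi}^ * K_w = T_{g^ 2} C_{\tau} T_{h^ 2}^ * K_w$ using $C_{\psi}^ * K_w = K_{\psi(w)}$ and $T_u^ * K_w = \overline{u(w)}\, K_w$, conclude by density of the kernels, and inherit the function-theoretic claims from Cowen's theorem. The one loosely justified point is $c w + d \neq 0$ on $\DD$; the right reason is that $\psi$ is bounded and analytic on $\DD$, so either its pole lies off $\overline{\DD}$, or $c = 0$ and then $d \neq 0$ because $ad - bc \neq 0$.
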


	\begin{proof}
		The proof is the same as that of Theorem \ref{theorem: Cowen's theorem}, see \cite[Theorem 9.2]{CM} for the details. Let $K_z$ denote the reproducing kernel of $L_a^ 2$ at $z \in \DD$. Then a straightforward calculation gives
		\begin{align*}
			C_{\psi}^ * K_z = T_{g^ 2} C_{\tau} T_{h^ 2}^ * K_z, \qquad z \in \DD.
		\end{align*}
		Since $\spa{\{ K_z : z \in \DD \}}$ is dense in $L_a^ 2$, we obtain the desired identity.
	\end{proof}

	On the Bergman space $L_a^2$, we can also establish a result analogous to Lemma \ref{L-CZ}:
	
	\begin{lem} \label{lemma: compactness for linear fractional maps}
		Suppose $\varphi$ and $\psi$ are linear fractional maps from $\DD$ into itself. Let $\tau$ be the map associated with $\psi$ as described in Lemma \ref{lemma: representation of the adjoint}. Then on the Bergman space $L_a^2$:
		\begin{enumerate}[label = \textup{(\alph*)}]
			\item $C_{\varphi} C_{\psi}^ *$ is compact  if and only if $C_{\varphi} C_{\tau}$ is compact, and
			\item $C_{\psi}^ * C_{\varphi}$ is compact  if and only if $C_{\tau} C_{\varphi}$ is compact.
		\end{enumerate}
	\end{lem}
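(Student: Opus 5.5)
We will mirror the Hardy-space proof of Lemma \ref{L-CZ} from \cite{CZ}, using Lemma \ref{lemma: representation of the adjoint} in place of Theorem \ref{theorem: Cowen's theorem}. By Lemma \ref{lemma: representation of the adjoint} we have $C_{\psi}^* = T_{g^2} C_{\tau} T_{h^2}^*$ on $L_a^2$. Hence
\begin{align*}
C_{\varphi} C_{\psi}^* = C_{\varphi} T_{g^2} C_{\tau} T_{h^2}^*, \qquad C_{\psi}^* C_{\varphi} = T_{g^2} C_{\tau} T_{h^2}^* C_{\varphi}.
\end{align*}
Since $g^2$ and $h^2$ are bounded analytic functions, $T_{g^2}$ and $T_{h^2}$ are simply multiplication operators on $L_a^2$. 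Moreover, $h^2$ is a polynomial of degree at most two.

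For (a), we first move $T_{g^2}$ past $C_{\varphi}$ using $C_{\varphi} T_{g^2} = T_{g^2 \circ \varphi} C_{\varphi}$, which gives $C_{\varphi} C_{\psi}^* = T_{g^2\circ\varphi} C_{\varphi} C_{\tau} T_{h^2}^*$. Next we dispose of the two multiplier factors.

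\begin{itemize}
\item The function $-\overline{b} z + \overline{d}$ does not vanish on $\overline{\DD}$, since $\tau$ maps $\DD$ into itself with $|d|>|b|$ in the standard setup. Hence $1/g^2$ is bounded as well, and $T_{g^2\circ\varphi}$ is invertible with inverse $T_{1/(g^2\circ\varphi)}$. Multiplying by this inverse shows that $C_{\varphi} C_{\psi}^*$ is compact if and only if $C_{\varphi} C_{\tau} T_{h^2}^*$ is compact.
\item If $h$ has no zero on $\overline{\DD}$, then $T_{h^2}^*$ is invertible, and we are done.
\end{itemize}

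The remaining case is $h(z_0)=0$ with $z_0 = -d/c \in \TT$. This happens when $\psi$ is non-automorphic with a pole on the circle; a pole inside $\DD$ is impossible. This case is the main obstacle. Here $T_{h^2}$ is injective with dense range but not bounded below. We handle it as Clifford and Zheng do in the Hardy case, by a finite-rank/compact perturbation argument.

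One direction is immediate. If $C_{\varphi} C_{\tau}$ is compact, then so is $C_{\varphi} C_{\tau} T_{h^2}^*$, by the ideal property.

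For the converse, use $T_{h^2}^* K_w = \overline{h(w)^2} K_w$ together with the explicit action of $C_{\tau}$ on kernels. From these we aim to show that if $C_{\varphi} C_{\tau}$ is not compact, then its noncompactness is witnessed by normalized kernels $k_{w_n}$ with $w_n \to \zeta \in \TT$, where $\zeta \neq z_0$. The point is that compactness of $C_{\varphi}C_{\tau}$ is governed by the boundary points $\zeta$ with $\tau(\zeta)\in\TT$ and $\varphi(\tau(\zeta))\in\TT$. One then has to check that $h$ does not vanish at such a point $\zeta$, or else treat $\zeta = z_0$ separately by hand using the linear fractional formulas. Away from $z_0$, $|h(w_n)|^2$ is bounded below, so $C_{\varphi} C_{\tau} T_{h^2}^* k_{w_n}$ stays away from zero in norm while $k_{w_n}\to 0$ weakly. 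This contradicts compactness.

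An alternative to this argument: localize with a smooth cutoff $\chi$ that vanishes near $z_0$. The Toeplitz operator $T_{\chi}$ has a compact commutator with the multipliers and differs from $T_{h^2}^*T_{1/\overline{h^2}\,\chi}$ by a compact operator. The part near $z_0$ is then handled by the explicit geometry of linear fractional maps.

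Part (b) is the same argument applied to $C_{\psi}^*C_{\varphi} = T_{g^2} C_{\tau} T_{h^2}^* C_{\varphi}$. The invertible factor $T_{g^2}$ on the left is removed immediately. For the factor $T_{h^2}^*$, we take adjoints: this reduces to the operator $C_{\varphi}^* T_{h^2} C_{\tau}^*$ and the same kernel test at boundary points.
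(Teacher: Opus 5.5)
Part (a) is correct, and it is already finished after your two bullets, because the ``remaining case'' $h(z_0)=0$ with $z_0=-d/c\in\TT$ never occurs. Suppose $c\neq 0$ and $|d|=|c|$. As $z\to -d/c$ inside $\DD$, the numerator $az+b$ tends to $(bc-ad)/c\neq 0$ while $cz+d\to 0$, so $\psi(\DD)$ would be unbounded. Hence $|d|>|c|$, by the same reasoning that gives $|d|>|b|$ for $\tau$. So $1/h^2$ is bounded, $T_{h^2}^*$ is invertible, and $C_\varphi C_\psi^*=T_{g^2\circ\varphi}\,C_\varphi C_\tau\,T_{h^2}^*$ has invertible outer factors. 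This agrees with the paper, which treats (a) as in the Hardy case. Your ``main obstacle'' paragraph, whose kernel test and cutoff are only sketched anyway, concerns an empty case; the claim that $\psi$ can have a pole on the circle is false.

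The genuine gap is in (b). There the difficulty is not a zero of $h$ but the \emph{position} of $T_{h^2}^*$. In $C_\psi^*C_\varphi=T_{g^2}C_\tau T_{h^2}^*C_\varphi$ it sits between the two composition operators, so its invertibility cancels nothing. The ideal property then yields neither implication; in particular ``$C_\tau C_\varphi$ compact $\Rightarrow C_\psi^*C_\varphi$ compact'' is no longer automatic as it was in (a). Passing to adjoints gives $C_\varphi^*T_{h^2}C_\tau^*$, which is the same sandwich. The only intertwining relation you have, $C_\varphi T_u=T_{u\circ\varphi}C_\varphi$ and its adjoint, moves analytic multipliers past $C_\varphi$ and co-analytic Toeplitz operators past $C_\varphi^*$; neither is the configuration you face. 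A kernel test of the kind you describe only detects non-compactness, so it cannot give the sufficiency direction at all.

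The missing idea, which is what the paper uses, has three steps.
\begin{enumerate}
\item Normalize $\varphi(0)=0$ by replacing $\varphi$ with $\phi_a\circ\varphi$, where $\phi_a(z)=(a-z)/(1-\bar a z)$ and $a=\varphi(0)$. This multiplies both $C_\psi^*C_\varphi$ and $C_\tau C_\varphi$ on the right by the invertible $C_{\phi_a}$.
\item Write $T_{h^2}^*=(\bar c\,T_{\bar z}+\bar d I)^2$ and split $T_{\bar z}=Q+R$ on $L_a^2$. Here $Q$ is the difference quotient and $R$ is compact, with $Rz^n=-\frac{1}{n+1}z^{n-1}$ for $n\ge 1$.
\item Use $QC_\varphi=T_\alpha C_\varphi Q$ and $QT_\alpha=T_\beta+\alpha(0)Q$, where $\alpha=Q\varphi$ and $\beta=Q\alpha$, together with $C_\tau T_u=T_{u\circ\tau}C_\tau$.
\end{enumerate}
This gives, modulo compact operators,
\[
C_\psi^*C_\varphi\equiv T_{g^2}\Bigl[\bar d^{\,2}C_\tau C_\varphi+2\bar c\bar d\,T_{\alpha\circ\tau}C_\tau C_\varphi Q+\bar c^{\,2}\bigl(T_{\beta\circ\tau}C_\tau C_\varphi Q+\alpha(0)\,T_{\alpha\circ\tau}C_\tau C_\varphi Q^2\bigr)\Bigr],
\]
in which every term has $C_\tau C_\varphi$ as a factor. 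On $H^2$ one has $T_{\bar z}=Q$ exactly, which is why the Hardy proof needs no correction term $R$.

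For the converse direction of (b), your ``same kernel test'' is also only a slogan and would have to be carried out. One possible route is to test $C_\psi^*C_\varphi$ against the normalized kernels at $\varphi(r\zeta)$ and $\psi(r\zeta)$, for a point $\zeta\in\TT$ with $\varphi(\zeta),\psi(\zeta)\in\TT$.
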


	To prove this lemma, we need some preliminaries.  Let $Q$ denote the difference quotient operator on $L_a^ 2$:
	\begin{align*}
		Q f (z) = \frac{f(z) - f(0)}{z}, \qquad z \in \DD.
	\end{align*}
	It is easy to see that $Q$ is bounded.
	
	\begin{lem} \label{lemma: operator identities}
		Suppose $\varphi$ is an analytic self-mapping of $\DD$ and $u$ is a bounded analytic function on $\DD$. Let
		\begin{align*}
			\alpha = Q \varphi \qquad \textrm{and} \qquad \beta = Q \alpha,
		\end{align*}
		which are bounded and analytic on $\DD$. Then on the Bergman space $L_a^ 2$:
		\begin{enumerate}[label = \textup{(\alph*)}]
			\item $C_{\varphi} T_u = T_{u \circ \varphi} C_{\varphi}$;
			\item $Q C_{\varphi} = T_{\alpha} C_{\varphi} Q$ and $Q T_{\alpha} = T_{\beta} + \alpha (0) Q$ if $\varphi(0)=0$.
		\end{enumerate}
	\end{lem}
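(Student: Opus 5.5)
The plan is to verify each identity by applying both sides to an arbitrary $f \in L_a^2$ and comparing pointwise values on $\DD$. Since all operators involved are bounded, it suffices to check equality of the resulting functions. Recall that for a bounded analytic symbol $u$ the Toeplitz operator $T_u$ on $L_a^2$ is simply multiplication by $u$, because $uf$ is already analytic and $P$ fixes it. So every identity reduces to an elementary algebraic identity between analytic functions.

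For (a), compute $C_{\varphi} T_u f = (uf)\circ\varphi = (u\circ\varphi)(f\circ\varphi) = T_{u\circ\varphi} C_{\varphi} f$. Here $u\circ\varphi$ is bounded and analytic, so $T_{u\circ\varphi}$ is again a multiplication operator.

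For (b), assume $\varphi(0)=0$. Boundedness of $\alpha$ and $\beta$ comes from the Schwarz lemma: $|\varphi(z)|\le |z|$ gives $|\alpha|\le 1$. For $\beta$ I would note that $\alpha$ is bounded analytic, and $Q$ maps $H^\infty$ into $H^\infty$. This follows by applying the maximum principle to $(g(z)-g(0))/z$ on $|z|=r$ and letting $r\to1$, which gives $\|Qg\|_\infty\le 2\|g\|_\infty$.

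The first identity in (b) follows from
\begin{align*}
QC_{\varphi}f(z)=\frac{f(\varphi(z))-f(0)}{z}=\frac{\varphi(z)}{z}\cdot\frac{f(\varphi(z))-f(\varphi(0))}{\varphi(z)}=\alpha(z)\,(Qf)(\varphi(z)),
\end{align*}
using $\varphi(0)=0$. At points where $\varphi(z)=0$, the identity extends by continuity, or simply because both sides are analytic and agree off a discrete set.

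For the second identity, write $\alpha(z)=\alpha(0)+z\beta(z)$. Then
\begin{align*}
Q(\alpha f)(z)=\frac{\alpha(z)f(z)-\alpha(0)f(0)}{z}=\beta(z)f(z)+\alpha(0)\frac{f(z)-f(0)}{z},
\end{align*}
which is $(T_\beta+\alpha(0)Q)f$.

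The only point needing care is the boundedness of $\beta$, handled as above. Everything else is direct computation.
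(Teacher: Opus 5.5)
Your proposal is correct and follows essentially the same route as the paper: both verify the identities pointwise on an arbitrary $f \in L_a^2$, using that $T_u$ acts as multiplication for bounded analytic $u$, the factorization $\frac{f(\varphi(z))-f(0)}{z}=\frac{\varphi(z)}{z}\cdot\frac{f(\varphi(z))-f(0)}{\varphi(z)}$, and the splitting $\alpha(z)-\alpha(0)=z\beta(z)$. Your extra remarks fill in details the paper leaves implicit: the bound $\|Qg\|_\infty\le 2\|g\|_\infty$ gives boundedness of $\alpha$ and $\beta$, and you handle the zeros of $\varphi$ (the degenerate case $\varphi\equiv 0$, where the discrete-set argument does not apply, is trivial since both sides vanish).
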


	\begin{proof}
		Let $f$ be any function in $L_a^ 2$. Since
		\begin{align*}
			C_{\varphi} T_u f = (u \circ \varphi) (f \circ \varphi) = T_{u \circ \varphi} C_{\varphi} f,
		\end{align*}
		(a) holds.

		Using
		\begin{align*}
			Q C_{\varphi} f (z) = \frac{f(\varphi(z)) - f (0)}{z} = \frac{f(\varphi (z)) - f(0)}{\varphi (z)} \frac{\varphi (z)}{z} = T_{\alpha} C_{\varphi} Q f(z), \qquad z \in \DD,
		\end{align*}
		we obtain $Q C_{\varphi} = T_{\alpha} C_{\varphi} Q$. Finally, observing that
		\begin{align*}
			Q T_{\alpha} f (z) & = \frac{\alpha (z) f(z) - \alpha (0) f(0)}{z} = \frac{[\alpha (z) - \alpha (0)] f(z)}{z} + \alpha (0) \frac{f(z) - f(0)}{z} \\
			& = T_{\beta} f (z) + \alpha (0) Q f (z), \qquad z \in \DD,
		\end{align*}
		we get that $Q T_{\alpha} = T_{\beta} + \alpha (0) Q$. This gives (b).
	\end{proof}

	Now we are going to prove Lemma \ref{lemma: compactness for linear fractional maps}.

	\begin{proof}[Proof of Lemma \ref{lemma: compactness for linear fractional maps}]
		Following the argument for the corresponding result in the  Hardy space setting, (a) and the necessity part of (b) can be established similarly (see \cite{CZ}) and are omitted. It remains to prove the sufficiency part of (b). Replacing $\varphi$ by $\frac{\varphi (0) - \varphi}{1 - \overline{\varphi (0)} \varphi}$ if necessary, we may assume that $\varphi (0) = 0$. And since the conclusion is trivial if $\psi$ is a constant function, we further assume that $\psi$ is non-constant.
		
		Adopting notation in Lemma \ref{lemma: representation of the adjoint}, we get that
		\begin{align*}
			C_{\psi}^ * C_{\varphi} = T_{g^ 2} C_{\tau} T_{h^ 2}^ * C_{\varphi} = T_{g^ 2} C_{\tau} (\overline{c} T_{\overline{z}} + \overline{d} I)^ 2 C_{\varphi},
		\end{align*}
		where $I$ is the identity operator on $L_a^ 2$. For any
		\begin{align*}
			f(z) = a_0 + a_1 z + a_2 z^ 2 + \cdots
		\end{align*}
		in $L_a^ 2$, we have
		\begin{align*}
			T_{\overline{z}} \left( a_0 + a_1 z + a_2 z^ 2 + \cdots \right) & = \frac{1}{2} a_1 + \frac{2}{3} a_2 z + \frac{3}{4} a_3 z^ 2 + \cdots \\
			& = \left( a_1 + a_2 z + a_3 z^ 2 + \cdots \right) - \left( \frac{1}{2} a_1 + \frac{1}{3} a_2 z + \frac{1}{4} a_3 z^ 2 + \cdots \right).
		\end{align*}
		Hence,
		\begin{align*}
			T_{\overline{z}} = Q + R,
		\end{align*}
		where $R$ is the compact operator defined by
		\begin{align*}
			R f = \sum_{j = 0}^ {\infty} {\frac{1}{\sqrt{(j + 1) (j + 2)}} (\sigma_j \otimes \sigma_{j + 1}) f}, \ \ \ \ f\in L_a^2.
		\end{align*}
		It follows that
		\begin{align*}
			(\overline{c} T_{\overline{z}} + \overline{d} I)^ 2 = (\overline{c} Q + \overline{c} R + \overline{d} I)^ 2 = (\overline{c} Q + \overline{d} I)^ 2 + K
		\end{align*}
		for some compact operator $K$. Therefore, $C_{\psi}^ * C_{\varphi}$ is compact if and only if the operator
		\begin{align*}
			\overline{c}^ 2 T_{g^ 2} C_{\tau} Q^ 2 C_{\varphi} + 2 \overline{c d} T_{g^ 2} C_{\tau} Q C_{\varphi} + \overline{d}^ 2 T_{g^ 2} C_{\tau} C_{\varphi}
		\end{align*}
		is compact.
		
		If $C_{\tau} C_{\varphi}$ is compact, then  $T_{g^ 2} C_{\tau} C_{\varphi}$ is compact. By Lemma \ref{lemma: operator identities}, we have
		\begin{align*}
			T_{g^ 2} C_{\tau} Q C_{\varphi} = T_{g^ 2} C_{\tau} T_{\alpha} C_{\varphi} Q = T_{g^ 2} T_{\alpha \circ \tau} C_{\tau} C_{\varphi} Q,
		\end{align*}
		where $\alpha = Q \varphi$. This implies that  $T_{g^ 2} C_{\tau} Q C_{\varphi}$ is compact. By Lemma \ref{lemma: operator identities} again, we obtain that
		\begin{align*}
			T_{g^ 2} C_{\tau} Q^ 2 C_{\varphi} & = T_{g^ 2} C_{\tau} Q T_{\alpha} C_{\varphi} Q = T_{g^ 2} C_{\tau} [T_{\beta} + \alpha (0) Q] C_{\varphi} Q \\
			& = T_{g^ 2} C_{\tau} T_{\beta} C_{\varphi} Q + \alpha (0) T_{g^ 2} C_{\tau} Q C_{\varphi} Q \\
			& = T_{g^ 2} T_{\beta \circ \tau} C_{\tau} C_{\varphi} Q + \alpha (0) T_{g^ 2} T_{\alpha \circ \tau} C_{\tau} C_{\varphi} Q^ 2,
		\end{align*}
		where $\beta = Q \alpha$, which yields that $T_{g^ 2} C_{\tau} Q^ 2 C_{\varphi}$ is also compact. This completes the proof of the Lemma.
	\end{proof}

	Lemma \ref{lemma: compactness for linear fractional maps} yields the following characterization of the compactness of $C_{\varphi} C_{\psi}^ *$ and $C_{\psi}^ * C_{\varphi}$ on the Bergman space immediately.

	\begin{thm}
		Suppose $\varphi$ and $\psi$ are linear fractional maps from $\DD$ into itself. Let $\tau$ be the map associated with $\psi$ as described in Lemma \ref{lemma: representation of the adjoint}. Then:
		\begin{enumerate}[label = \textup{(\alph*)}]
			\item $C_{\varphi} C_{\psi}^ *$ is compact on the Bergman space $L_a^2$ if and only if $\varphi (\TT) \cap \psi (\TT) \cap \TT = \varnothing$, and
			\item $C_{\psi}^ * C_{\varphi}$ is compact on the Bergman space $L_a^2$ if and only if $\varphi^ {-1} (\TT) \cap \psi^ {-1} (\TT) \cap \TT = \varnothing$.
		\end{enumerate}
	\end{thm}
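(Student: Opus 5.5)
By Lemma \ref{lemma: compactness for linear fractional maps}, the plan is to reduce both statements to the compactness of a single Bergman--composition operator. In (a) we have $C_{\varphi} C_{\tau} = C_{\tau \circ \varphi}$, and in (b) $C_{\tau} C_{\varphi} = C_{\varphi \circ \tau}$. Each of these has a linear fractional symbol. It is classical (Shapiro's criterion via $N_{\varphi,2}$, or directly MacCluer--Shapiro for univalent symbols with bounded angular derivative considerations) that for a linear fractional self-map $\eta$ of $\DD$, $C_{\eta}$ is compact on $L_a^2$ if and only if $\eta(\overline{\DD}) \subseteq \DD$, i.e.\ $\eta(\TT)\cap\TT=\varnothing$. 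This is exactly the same criterion as on $H^2$. I would invoke it once, noting that a linear fractional map touching $\TT$ has a finite angular derivative at the contact point, which obstructs compactness on $L_a^2$ just as on $H^2$.

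The key observation is that the map $\tau$ of Lemma \ref{lemma: representation of the adjoint} is literally the same $\tau$ as in Theorem \ref{theorem: Cowen's theorem}. The geometric condition ``$C_{\tau\circ\varphi}$ (resp.\ $C_{\varphi\circ\tau}$) is compact'' is therefore the same condition on symbols on both spaces. For part (a), I would repeat the geometric argument from the proof of Theorem \ref{theorem: characterization of Sp for linear fractional symbols}. Using $\tau = \delta\circ(\widetilde{\psi})^{-1}\circ\delta$ and $\Phi=(\tau\circ\varphi)^{-1}$, we saw that $\Phi(\DD)\supseteq\overline{\DD}$ (equivalently $(\tau\circ\varphi)(\overline{\DD})\subseteq\DD$) holds exactly when $\varphi(\TT)\cap\psi(\TT)\cap\TT=\varnothing$. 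Case~1 there gives compactness, and the argument in Case~2 together with its converse gives the contact point. Only the symbol-level equivalence is needed here, and it transfers verbatim.

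For part (b), the analogous computation is for $\varphi\circ\tau$. We have $(\varphi\circ\tau)(\overline{\DD})\not\subseteq\DD$ if and only if some $\zeta\in\TT$ has $\tau(\zeta)\in\varphi^{-1}(\overline{\DD}\setminus\DD)$. Since $\tau$ maps $\overline{\DD}$ into $\overline{\DD}$, this forces $\omega:=\tau(\zeta)\in\TT$ with $\varphi(\omega)\in\TT$. Unwinding $\tau(\zeta)=\omega$ via $\delta\circ\tau=(\widetilde{\psi})^{-1}\circ\delta$ gives $\widetilde{\psi}(1/\omega)=1/\zeta$, hence $\psi(\omega)=\zeta\in\TT$. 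So $\omega\in\varphi^{-1}(\TT)\cap\psi^{-1}(\TT)\cap\TT$. Conversely, such an $\omega$ yields $\zeta=\psi(\omega)\in\TT$ with $\tau(\zeta)=\omega$.

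The main obstacle I expect is justifying the one-symbol compactness criterion on $L_a^2$ for linear fractional maps rather than the bookkeeping. If citing it is not acceptable, I would prove noncompactness directly. Suppose $\eta(\xi)=\omega_0\in\TT$ with $\xi\in\TT$. Test against the normalized kernels $k_z$ of $L_a^2$, using $C_{\eta}^*k_z = \frac{1-|z|^2}{1-|\eta(z)|^2}k_{\eta(z)}$. As $z\to\xi$ radially, the ratio $\frac{1-|z|^2}{1-|\eta(z)|^2}$ stays bounded below by $1/|\eta'(\xi)|>0$, while $k_z\to0$ weakly. This contradicts compactness. Sufficiency is immediate: if $\eta(\overline{\DD})\subseteq r\DD$ with $r<1$, then $C_{\eta}$ is compact.
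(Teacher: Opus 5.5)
Your proposal is correct. It shares the paper's skeleton: reduce via Lemma~\ref{lemma: compactness for linear fractional maps} to the single operator $C_{\tau\circ\varphi}$ (resp.\ $C_{\varphi\circ\tau}$), then use that a linear fractional self-map touching $\TT$ has a finite angular derivative there.

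Where you differ is the last step. The paper goes back to the Hardy space: for these symbols, Bergman and Hardy compactness of $C_{\tau\circ\varphi}$ are both equivalent to having no finite angular derivative on $\TT$. It then quotes Clifford--Zheng's Theorem~2.1 for the symbol condition and leaves (b) as ``similar''.

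You instead check the symbol condition directly:
\begin{itemize}
\item For (a), you reuse the M\"obius-geometry equivalence $(\tau\circ\varphi)(\overline{\DD})\subseteq\DD \iff \varphi(\TT)\cap\psi(\TT)\cap\TT=\varnothing$ from the proof of Theorem~\ref{theorem: characterization of Sp for linear fractional symbols}.
\item For (b), your computation with $\delta\circ\tau=(\widetilde{\psi})^{-1}\circ\delta$ correctly shows that $\varphi(\tau(\zeta))\in\TT$ for some $\zeta\in\TT$ exactly when $\varphi^{-1}(\TT)\cap\psi^{-1}(\TT)\cap\TT\neq\varnothing$, via $\omega=\tau(\zeta)$ and $\zeta=\psi(\omega)$.
\end{itemize}
Combined with your kernel-testing proof of the one-symbol criterion, this makes the argument self-contained and independent of the Hardy-space compactness theorem, and it actually writes out (b). The paper's route is shorter but rests entirely on citations.

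Two minor points:
\begin{itemize}
\item Along the radius, $(1-|z|^2)/(1-|\eta(z)|^2)$ tends to $1/|\eta'(\xi)|$ and may approach it from below (e.g.\ for $\eta(z)=(1+z)/2$ it equals $4(1+r)/(3+r)<2$). So say it is bounded away from $0$ near $\xi$, which is all you need.
\item The cases where $\varphi$ or $\psi$ is constant should be dismissed separately, since then $\Phi=(\tau\circ\varphi)^{-1}$ or Lemma~\ref{lemma: representation of the adjoint} is unavailable. Both sides of the equivalence hold trivially there.
\end{itemize}
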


	\begin{proof}
		We only prove (a), since (b) can be shown similarly. In the view of Lemma \ref{lemma: compactness for linear fractional maps}, it is sufficient to show that $C_\varphi C_{\tau}$ is compact on $L_a^2$ if and only if $\varphi (\TT) \cap \psi (\TT) \cap \TT = \varnothing$.

 From the remarks in \cite[Section 11.3]{Zhu}, we have that $C_{\tau \circ \varphi}$ is compact if and only if $\tau \circ \varphi$ does have not a finite angular derivative at any point of $\TT$, which is equivalent to that the Hardy--composition operator $C_{\tau \circ \varphi}$ is compact (see \cite{CZ}). According to \cite[Theorem 2.1]{CZ}, the Hardy--composition operator $C_{\tau \circ \varphi}$ is compact if and only if $\varphi (\TT) \cap \psi (\TT) \cap \TT = \varnothing$, which gives (a).
	\end{proof}

To study when the operators $C_{\varphi} C_{\psi}^*$ and $C_{\psi}^* C_{\varphi}$ belong to the Schatten classes on the Bergman space,  we need the following lemma which is parallel to Lemma \ref{lemma: linear fractional symbols} in the Hardy space setting.
	\begin{lem}
		Suppose $\varphi$ and $\psi$ are linear fractional maps from $\DD$ into itself. Let $\tau$ be the map associated with $\psi$ as described in Lemma \ref{lemma: representation of the adjoint}. Then we have:
		\begin{enumerate}[label = \textup{(\alph*)}]
			\item  $C_{\varphi} C_{\psi}^*$ is in $S_p (L_a^ 2)$ if and only if $C_{\varphi} C_{\tau}$ is in $S_p (L_a^ 2)$, where $0 < p < \infty$;
			\item  $C_{\psi}^* C_{\varphi}$ is in $S_p (L_a^ 2)$ if and only if $C_{\tau} C_{\varphi}$ is in $S_p (L_a^ 2)$,  where $1 < p < \infty$.
		\end{enumerate}
	\end{lem}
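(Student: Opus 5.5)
We will mirror the proof of Lemma \ref{lemma: compactness for linear fractional maps}, replacing ``compact'' by ``in $S_p(L_a^2)$'' and checking that every step uses only ideal properties of $S_p$ together with the fact that certain leftover operators lie in $S_p$. By Lemma \ref{lemma: representation of the adjoint}, $C_\psi^* = T_{g^2} C_\tau T_{h^2}^*$, where $g = 1/(-\overline{b}z+\overline{d})$ and $h = cz+d$. Since $\psi$ maps $\DD$ into itself, we have $|d|>|c|$ and $|d|>|b|$ (for nonconstant $\psi$; the constant case is trivial). Hence $g^{\pm 1}$ and $h^{\pm1}$ are bounded analytic and zero-free on $\overline{\DD}$, so $T_{g^2}$ and $T_{h^2}$ are invertible with inverses $T_{g^{-2}}$ and $T_{h^{-2}}$.

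For (a), write $C_\varphi C_\psi^* = C_\varphi T_{g^2} C_\tau T_{h^2}^*$. Lemma \ref{lemma: operator identities}(a) gives $C_\varphi T_{g^2} = T_{g^2\circ\varphi} C_\varphi$, and $T_{g^2\circ\varphi}$ is invertible because $g^2\circ\varphi$ and its reciprocal are bounded analytic. Therefore
\begin{align*}
C_\varphi C_\psi^* = T_{g^2\circ\varphi}\, C_\varphi C_\tau\, T_{h^2}^*,
\end{align*}
with both outer factors invertible. Since $S_p$ is a two-sided ideal for every $0<p<\infty$, this yields the equivalence in (a) with no restriction on $p$.

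For (b), the obstacle is that $T_{h^2}^*$ sits between $C_\tau$ and $C_\varphi$ and cannot be commuted past $C_\varphi$. We follow the compactness proof: after normalizing $\varphi(0)=0$ (composing with a disk automorphism, which multiplies by an invertible composition operator and so does not affect $S_p$ membership), write $T_{\overline z} = Q + R$. Then
\begin{align*}
C_\psi^* C_\varphi = \overline{c}^2 T_{g^2} C_\tau Q^2 C_\varphi + 2\overline{cd}\, T_{g^2} C_\tau Q C_\varphi + \overline{d}^2 T_{g^2} C_\tau C_\varphi + T_{g^2} C_\tau K C_\varphi,
\end{align*}
where $K$ is a polynomial expression in $Q$ and $R$ containing at least one factor $R$. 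The operator $R$ is diagonal-shift type with weights $1/\sqrt{(j+1)(j+2)}\sim 1/j$, so its singular values are comparable to $1/j$, and $R \in S_p$ exactly for $p>1$. Hence $K\in S_p$ for $p>1$, and this is the source of the restriction $1<p<\infty$.

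Sufficiency in (b): if $C_\tau C_\varphi\in S_p$, then the identities from Lemma \ref{lemma: operator identities}(b), as in the compactness proof, express $T_{g^2}C_\tau Q C_\varphi$ and $T_{g^2}C_\tau Q^2 C_\varphi$ as (bounded)$\cdot C_\tau C_\varphi\cdot$(bounded), so each lies in $S_p$. Together with $K\in S_p$, this gives $C_\psi^*C_\varphi\in S_p$.

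Necessity in (b) is the main obstacle, since we must undo the adjoint. The plan is to instead write $C_\tau = T_{g^{-2}} C_\psi^* (T_{h^2}^*)^{-1}$, using the invertibility of $T_{h^2}^*$. Then
\begin{align*}
C_\tau C_\varphi = T_{g^{-2}}\, C_\psi^*\, T_{h^{-2}}^*\, C_\varphi.
\end{align*}
Expand $T_{h^{-2}}^* = T_{\overline{h^{-2}}}$: here $\overline{h^{-2}}$ is a power series in $\overline z$ converging on $\overline{\DD}$, so $T_{\overline{h^{-2}}} = \sum_k a_k T_{\overline z}^{k}$, with $T_{\overline z}^k$ being $T_{\overline z^k}$ for co-analytic Toeplitz operators. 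We would then replace $T_{\overline z}^k = (Q+R)^k = Q^k + (\text{an } S_p \text{ term})$ and move each $Q^k$ across $C_\varphi$ via $QC_\varphi = T_\alpha C_\varphi Q$, producing terms of the form $C_\psi^* C_\varphi \cdot$(bounded) plus $S_p$ terms. The delicate point is controlling the infinite sum. Its $S_p$ norms grow at most like $k\|Q\|^{k}$, so we need $\|Q\|$ times the radius bound to be summable against the decay of $a_k$. If that fails, we would fall back on the cruder route used in \cite{CZ}: treat $T_{\overline{h^{-2}}}$ via $T_{\overline{h^{-2}}}=(\overline{c}T_{\overline z}+\overline d)^{-2}$ and invert $\overline{c}Q+\overline d$ on the Calkin-type quotient modulo $S_p$ (a Banach algebra quotient for $p\ge1$), since $\|Q\|\le 1$ and $|c|<|d|$.
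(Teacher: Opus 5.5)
Your part (a) and your sufficiency argument in (b) are correct and match the paper. In (a), the factorization $C_\varphi C_\psi^*=T_{g^2\circ\varphi}\,C_\varphi C_\tau\,T_{h^2}^*$ has invertible outer factors, and the ideal property gives every $0<p<\infty$. In (b), the $Q$/$R$ decomposition works with $R\in S_p$ exactly for $p>1$.

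The gap is the necessity in (b). The paper itself only points to \cite{CZ} here, and neither of your routes works as written.

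\emph{Main route.} Iterating $QC_\varphi=T_\alpha C_\varphi Q$ and $QT_\alpha=T_\beta+\alpha(0)Q$ gives $Q^kC_\varphi=\sum_j T_{u_{k,j}}C_\varphi Q^j$. So the terms you get are $C_\psi^*\,T_{u_{k,j}}\,C_\varphi Q^j$, not $C_\psi^*C_\varphi\cdot(\text{bounded})$. An analytic Toeplitz operator is trapped between $C_\psi^*$ and $C_\varphi$, and nothing in your argument removes it.

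\emph{Fallback.} It fails on three counts. First, $\mathfrak L(L_a^2)/S_p$ is not a Banach algebra: $S_p$ is not norm closed, so the quotient seminorm is just the essential norm. Second, $\|Q\|=\sqrt2$ on $L_a^2$, since $Qz=1$ and $\|z\|=1/\sqrt2$. Third, and most important, $\bar cQ+\bar d\equiv T_h^*$ is indeed invertible modulo $S_p$, but that cannot cancel a factor sandwiched in $C_\tau(\bar cQ+\bar d)^2C_\varphi$ unless you first commute it past $C_\varphi$.

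The missing idea is to make that commutation, and then invert a two-sided multiplication map rather than $\bar cQ+\bar d$.

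Normalize $\varphi(0)=0$ as you do, so $|\alpha|\le 1$ by Schwarz's lemma. From $QC_\varphi=T_\alpha C_\varphi Q$ and $T_{\bar z}=Q+R$ you get $T_{\bar z}C_\varphi\equiv T_\alpha C_\varphi T_{\bar z}$ modulo $S_p$. The identity $(QT_\alpha-T_\alpha Q)f=f(0)\beta$ is rank one, so $T_{\bar z}$ commutes with $T_\alpha$ modulo $S_p$.

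Set $\mathcal N(Y)=T_{\alpha\circ\tau}\,Y\,T_{\bar z}$. Using $C_\tau T_\alpha=T_{\alpha\circ\tau}C_\tau$ from Lemma \ref{lemma: operator identities}(a), these give
\[
T_{g^{-2}}\,C_\psi^*C_\varphi=C_\tau\,(T_h^*)^2\,C_\varphi\equiv(\bar d+\bar c\,\mathcal N)^2\,(C_\tau C_\varphi)\pmod{S_p}.
\]
The map $\mathcal N$ is a contraction both on $\mathfrak L(L_a^2)$ and on $(S_p,\|\cdot\|_{S_p})$, because $\|T_{\alpha\circ\tau}\|\le1$ and $\|T_{\bar z}\|\le1$. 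Since $|c|<|d|$, the Neumann series inverts $\bar d+\bar c\,\mathcal N$ on both spaces with the same inverse. This yields both directions of (b) for $1<p<\infty$ at once.

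Alternatively, use the same identity in the Calkin algebra to get compactness of $C_{\varphi\circ\tau}$. A compact composition operator with linear fractional symbol has $\|\varphi\circ\tau\|_\infty<1$, and hence lies in every $S_p$.
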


	\begin{proof}
		The proof is the same as that of Lemma \ref{lemma: compactness for linear fractional maps} with ``Schatten $p$-class operators'' in place of ``composition operators''. However, note that the operator $R$ defined in the proof of Lemma \ref{lemma: compactness for linear fractional maps} belongs to $S_p (L_a^ 2)$ only for $1 < p < \infty$. Thus  we require $1 < p < \infty$ in (b).
	\end{proof}

With the help of the above lemma, we can establish the following necessary and sufficient conditions for  $C_{\varphi} C_{\psi}^*$ and $C_{\psi}^* C_{\varphi}$ to be in the Schatten classes on the Bergman space.

	\begin{thm}
		Suppose $\varphi$ and $\psi$ are linear fractional maps from $\DD$ into itself. Then we have:
		\begin{enumerate}[label = \textup{(\alph*)}]
			\item [\emph{(i)}] $C_{\varphi} C_{\psi}^ *$ is in $S_p (L_a^ 2)$ if and only if $\varphi (\TT) \cap \psi (\TT) \cap \TT = \varnothing$, where $0 < p < \infty$;
			\item [\emph{(ii)}] $C_{\psi}^ * C_{\varphi}$ is in $S_p (L_a^ 2)$ if and only if $\varphi^ {-1} (\TT) \cap \psi^ {-1} (\TT) \cap \TT = \varnothing$, where $1 < p < \infty$.
		\end{enumerate}
	\end{thm}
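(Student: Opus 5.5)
By the preceding lemma, part (i) reduces to deciding when $C_{\varphi} C_{\tau} = C_{\tau \circ \varphi}$ lies in $S_p (L_a^ 2)$ for $0 < p < \infty$. Likewise, part (ii) reduces to deciding when $C_{\tau} C_{\varphi} = C_{\varphi \circ \tau}$ lies in $S_p (L_a^ 2)$ for $1 < p < \infty$. In both cases the symbol is again a linear fractional self-map of $\DD$. So the plan is to mirror the proof of Theorem \ref{theorem: characterization of Sp for linear fractional symbols}, with Theorem \ref{therorem: characterization of Sp for Cphi}(b) in place of part (a). I will write out (i) only; (ii) is identical with the roles of $\varphi$ and $\tau$ interchanged.

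\textbf{Necessity.} Suppose $\varphi (\TT) \cap \psi (\TT) \cap \TT \neq \varnothing$. The theorem on compactness just proved shows that $C_{\varphi} C_{\psi}^ *$ is not compact on $L_a^ 2$. Since every $S_p$ operator is compact, $C_{\varphi} C_{\psi}^ *$ lies in no $S_p (L_a^ 2)$. No new work is needed here.

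\textbf{Sufficiency.} Suppose $\varphi (\TT) \cap \psi (\TT) \cap \TT = \varnothing$, and put $\Phi = (\tau \circ \varphi)^ {-1}$ as in the Hardy space proof. Case 2 of that proof shows that if $\Phi (\DD) \not\supseteq \overline{\DD}$, then the intersection is nonempty. Hence we are in Case 1: $\Phi (\DD) \supseteq \overline{\DD}$, and $\Phi^ {-1} (\overline{\DD}) \subseteq \rho \DD$ for some $\rho < 1$. Equivalently, $(\tau \circ \varphi) (\DD)$ is contained in $\rho \DD$, so $\| \tau \circ \varphi \|_{\infty} \leqslant \rho < 1$. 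This is the only geometric input I need.

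\textbf{Main step.} It remains to show that a composition operator whose symbol $u$ satisfies $\| u \|_{\infty} \leqslant \rho < 1$ is in every $S_p (L_a^ 2)$. I would not use the generalized counting function. Instead I would argue directly with Theorem \ref{theorem: characterization of Sp} and the projections $P_n$ onto $\spa \{ 1, z, \cdots, z^ n \}$. The operator $C_u - C_u P_n$ has norm at most $\sup_{\| f \| \leqslant 1} \sup_{|w| \leqslant \rho} |(Q_n f)(w)|$. By the Bergman analogue of Lemma \ref{lemma: estimate of Qnf and (Qnf)'}, obtained by Cauchy--Schwarz against the basis $\{ \sigma_j \}$, this is at most $C n \rho^ {n}$. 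The rank of $C_u P_n$ is at most $n + 1$, and $\sum_n (C n \rho^ n)^ p < \infty$ for every $p$. This settles $p \geqslant 1$ by Theorem \ref{theorem: characterization of Sp}. For $0 < p < 1$, the same estimate shows the approximation numbers decay geometrically, and approximation numbers equal singular values. Alternatively, one can factor $C_u = C_{u / \rho} C_{\rho z}$, where $C_{\rho z}$ is diagonal with eigenvalues $\rho^ n$.

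The main obstacle is the reduction lemma itself, which is where the restriction $p > 1$ in (ii) originates; that lemma is taken as given. The other point needing care is justifying, in the sufficiency direction, that $\tau \circ \varphi$ maps $\DD$ strictly inside $\rho \DD$, and I would cite the Case 1/Case 2 dichotomy verbatim for this.
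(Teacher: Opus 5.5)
Your argument is correct. Its skeleton is the paper's: reduction to $C_{\tau\circ\varphi}$ (resp.\ $C_{\varphi\circ\tau}$) via the preceding lemma, the Case~1/Case~2 geometry, and non-compactness for necessity. The difference lies in how you finish Case~1.

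The paper reruns the Hardy proof with Theorem~\ref{therorem: characterization of Sp for Cphi}(b) in place of (a). That is, it bounds the Luecking--Zhu integral of the generalized counting function $N_{\tau\circ\varphi,2}$, using that this function vanishes off $\Phi^{-1}(\DD)=(\tau\circ\varphi)(\DD)\subseteq\rho\DD$ together with a Littlewood-type bound.

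You instead read Case~1 as $\sup_{\DD}|\tau\circ\varphi|<\rho<1$ and show directly that such a symbol gives an operator in every $S_p(L_a^2)$. Your factorization $C_u=C_{u/\rho}C_{\rho z}$, with $C_{\rho z}\sigma_n=\rho^n\sigma_n$, does this for all $0<p<\infty$ at once. It is legitimate because $|u|<\rho$ on $\overline{\DD}$, so $u/\rho$ is a self-map of $\DD$. This makes your approximation-number estimate unnecessary.

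What your route buys is self-containedness: it needs no counting-function machinery. In particular it does not depend on the exact normalization in Theorem~\ref{therorem: characterization of Sp for Cphi}(b). As printed, that theorem has $\log(1/|z|)$ where Luecking--Zhu have $[\log(1/|z|)]^2$; this is harmless in the paper's use, since everything lives in $\rho\DD$. The paper's route buys only uniformity with the Hardy case.

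Your observation also shows something more. For a linear fractional $u$, the following coincide: compactness of $C_u$, membership in some $S_p$, membership in all $S_p$, and $u(\overline{\DD})\subseteq\DD$. So sufficiency could be read off from the compactness theorem and the two reduction lemmas without redoing Case~2.

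The degenerate cases of constant $\varphi$ or $\psi$, where $\Phi$ or $\tau$ is undefined, give rank-one operators and are trivial, as in the paper.
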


	\begin{proof}
		The proof is analogous to that of Theorem \ref{theorem: characterization of Sp for linear fractional symbols}, with the only difference being that (a) of Theorem \ref{therorem: characterization of Sp for Cphi} used in the proof is replaced by (b).
	\end{proof}

	We now turn to discussing when the operator $C_{\psi}^ * C_{\varphi}$ is in the Schatten classes, where $\varphi$ and $\psi$ are  general self-mappings of the open unit disk $\mathbb D$.
	
	\begin{thm}\label{BS}
		Let $\varphi$ and $\psi$ be two analytic self-mappings of $\DD$, and $1 \leqslant p < \infty$. Then  $C_{\psi}^ * C_{\varphi} \in S_p (L_a^ 2)$ if
		\begin{align*}
			\int_{\DD} \frac{\rd A(z)}{\left( 1 - |\varphi (z)| |\psi (z)| \right) \left( 1 - |\varphi (z)| \right)^ p \left( 1 - |\psi (z)| \right)^ p} < +\infty.
		\end{align*}
	\end{thm}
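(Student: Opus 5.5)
I would mimic the proof of Theorem \ref{proposition: sufficient condition for Cpsi*Cphi in Sp}, which is simpler here because the Bergman inner product needs no Littlewood--Paley step. Let $P_n$ be the orthogonal projection of $L_a^2$ onto $\spa\{\sigma_0,\dots,\sigma_n\}$ and $Q_n=I-P_n$. Then
\begin{align*}
C_{\psi}^*C_{\varphi}-\left(C_{\psi}^*C_{\varphi}P_n+P_nC_{\psi}^*C_{\varphi}-P_nC_{\psi}^*C_{\varphi}P_n\right)=Q_nC_{\psi}^*C_{\varphi}Q_n ,
\end{align*}
and the operator in parentheses has rank at most $3(n+1)$. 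By Theorem \ref{theorem: characterization of Sp} and the relaxation (\ref{ineq9}), it suffices to prove
\begin{align*}
\sum_{n\geqslant 1}\|Q_nC_{\psi}^*C_{\varphi}Q_n\|^p<+\infty .
\end{align*}

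\textbf{Pointwise bound for the tails.} For $f=\sum_j a_j\sigma_j$ with $\|f\|\leqslant 1$, Cauchy--Schwarz gives
\begin{align*}
|(Q_nf)(w)|^2\leqslant \sum_{j>n}(j+1)|w|^{2j}\leqslant C\,|w|^{2(n+1)}\left[\frac{1}{(1-|w|^2)^2}+\frac{n}{1-|w|^2}\right].
\end{align*}
Consequently
\begin{align*}
|(Q_nf)(w)|\leqslant C|w|^{n+1}\left[\frac{1}{1-|w|^2}+\frac{\sqrt n}{(1-|w|^2)^{1/2}}\right].
\end{align*}
This is the Bergman analogue of Lemma \ref{lemma: estimate of Qnf and (Qnf)'}.

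\textbf{Estimating the norm.} Since
\begin{align*}
\|Q_nC_{\psi}^*C_{\varphi}Q_n\|=\sup\left|\int_{\DD}(Q_nf)(\varphi(z))\overline{(Q_ng)(\psi(z))}\dd A(z)\right|
\end{align*}
over the unit ball, we get $\|Q_nC_{\psi}^*C_{\varphi}Q_n\|\leqslant C\sum_{k,l=0}^{1}\beta_n^{k,l}$, where
\begin{align*}
\beta_n^{k,l}=n^{\frac{k+l}{2}}\int_{\DD}\frac{|\varphi(z)\psi(z)|^{n}}{(1-|\varphi(z)|^2)^{\frac{2-k}{2}}(1-|\psi(z)|^2)^{\frac{2-l}{2}}}\dd A(z).
\end{align*}
$\rd A$ is a probability measure, so Jensen/H\"older (valid since $p\geqslant1$) lets us move the $p$-th power inside the integral. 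Summing over $n$ with $\sum_n n^{s}x^{n}\leqslant C_s(1-x)^{-1-s}$, applied at $x=|\varphi\psi|^p$, gives
\begin{align*}
\sum_n(\beta_n^{k,l})^p\leqslant C\int_{\DD}\frac{\rd A(z)}{(1-|\varphi|^2)^{\frac{p(2-k)}{2}}(1-|\psi|^2)^{\frac{p(2-l)}{2}}\,(1-|\varphi\psi|^p)^{1+\frac{p(k+l)}{2}}}.
\end{align*}

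\textbf{Reducing to the hypothesis.} We use $1-|\varphi\psi|^p\geqslant c_p(1-|\varphi\psi|)$ together with $1-|\varphi\psi|\geqslant\max(1-|\varphi|,1-|\psi|)\geqslant(1-|\varphi|)^{1/2}(1-|\psi|)^{1/2}$. Then
\begin{align*}
(1-|\varphi\psi|^p)^{\frac{p(k+l)}{2}}\geqslant c\,(1-|\varphi|)^{\frac{pk}{2}}(1-|\psi|)^{\frac{pl}{2}}
\end{align*}
(one splits the factor as in the Hardy proof). This exactly compensates the missing powers, and each term is bounded by
\begin{align*}
C\int_{\DD}\frac{\rd A}{(1-|\varphi\psi|)(1-|\varphi|)^p(1-|\psi|)^p},
\end{align*}
which is finite by hypothesis. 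The crude bound $n^{\frac{k+l}{2}}$ with its $p$-th power is the only delicate point, and the exponent bookkeeping in this step is where care is needed.
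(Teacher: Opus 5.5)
Your proposal is correct and follows essentially the same route as the paper: the $Q_n C_\psi^* C_\varphi Q_n$ truncation, the same Bergman tail estimate for $Q_n f$, and the Hardy-proof steps of Jensen, summation in $n$, and splitting $(1-|\varphi\psi|^p)^{p(k+l)/2}\geqslant(1-|\varphi|)^{pk/2}(1-|\psi|)^{pl/2}$. The exponent bookkeeping you flag does close exactly. In fact $1-|\varphi\psi|^p\geqslant 1-|\varphi\psi|$ for $p\geqslant 1$, and the $\max$ inequality alone gives the split, so the geometric-mean remark is not needed.
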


	\begin{proof}
		The proof is analogous to that of Theorem \ref{proposition: sufficient condition for Cpsi*Cphi in Sp}. In the following,  we only provide an outline of the proof. Let $P_n$ be the orthogonal projection from $L_a^2$ onto $\spa{\left\{ \sigma_0, \cdots, \sigma_n \right\}}$, where  $$\sigma_n (z) = \sqrt{n + 1} z^ n, \ \ \ n=0, 1, 2, \cdots.$$ Let $$Q_n = I - P_n,$$ where  $I$ is the identity operator on $L_a^2$ and $n$ is a positive integer. For each $f$ in $L_a^ 2$, we write
$$f(z) = \sum\limits_{j = 0}^ {\infty} {a_j \sigma_j}(z), \ \ \ \ z\in \mathbb D.$$
It follows that $$(Q_n f)(z) = \sum\limits_{j = n + 1}^ {\infty} {a_j \sigma_j}(z),$$
which yields
		\begin{align*}
			|(Q_n f) (z)| & \leqslant \left( \sum_{j = n + 1}^ {\infty} {|a_j|^ 2} \right)^ {\frac{1}{2}} \left( \sum_{j = n + 1}^ {\infty} {|\sigma_j (z)|^ 2} \right)^ {\frac{1}{2}} \\
			& \leqslant |z|^ {n + 1} \left[ \frac{1}{(1 - |z|^ 2)^ 2} + \frac{n + 1}{1 - |z|^ 2} \right]^ {\frac{1}{2}} \| f \|_{L^ 2 (\DD)} \\
			& \leqslant C |z|^ {n + 1} \left[ \frac{1}{1 - |z|^ 2} + \frac{n^ {\frac{1}{2}}}{(1 - |z|^ 2)^ {\frac{1}{2}}} \right] \| f \|_{L^ 2 (\DD)},
		\end{align*}
		where $C > 0$ is a constant.
		Now mimicking the proof of  Theorem \ref{proposition: sufficient condition for Cpsi*Cphi in Sp}, we arrive at the desired sufficient condition given in Theorem \ref{BS}.
	\end{proof}

The following result gives a criterion for $C_{\psi}^* C_{\varphi}$ to be in the Hilbert--Schmidt class on the Bergman space $L_a^2$.
	
	\begin{prop}
		Suppose that $\varphi$ and $\psi$ are analytic self-mappings of $\DD$. If
		\begin{align*}
			\int_{\DD} \int_{\DD} \frac{\rd A(z) \rd A(w)}{\big| 1 - \varphi (z) \overline{\varphi (w)} \big|^ 2 \big| 1 - \overline{\psi (z)} \psi (w) \big|^ 2} < +\infty,
		\end{align*}
then $C_{\psi}^ * C_{\varphi} \in S_2 (L_a^ 2)$.
	\end{prop}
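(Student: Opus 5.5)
We follow the proof of Proposition \ref{proposition: sufficient condition for S2, Hardy space}, replacing the Hardy orthonormal basis $\{z^j\}$ by the Bergman basis $\{\sigma_j\}_{j \geqslant 0}$ and the boundary integral by the area integral. By Theorem \ref{theorem: characterization of S2} it suffices to show
\begin{align*}
	\sum_{j, k = 0}^ {\infty} {|\langle C_{\psi}^ * C_{\varphi} \sigma_j, \sigma_k \rangle_{L^ 2 (\DD)}|^ 2} < +\infty.
\end{align*}
Since $\langle C_{\psi}^ * C_{\varphi} \sigma_j, \sigma_k \rangle = \sqrt{(j+1)(k+1)} \int_{\DD} \varphi^ j \overline{\psi}^ k \dd A$, we expand each squared modulus as a double integral over $\DD \times \DD$ and sum over $0 \leqslant j, k \leqslant N$. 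This gives
\begin{align*}
	\sum_{j, k = 0}^ N {|\langle C_{\psi}^ * C_{\varphi} \sigma_j, \sigma_k \rangle|^ 2} = \int_{\DD} \int_{\DD} S_N \big( \varphi (z) \overline{\varphi (w)} \big) \, S_N \big( \overline{\psi (z)} \psi (w) \big) \dd A(z) \rd A(w),
\end{align*}
where $S_N (\lambda) = \sum_{j = 0}^ N {(j + 1) \lambda^ j}$.

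The key step, and the only real difference from the Hardy case, is a uniform bound on the partial sums $S_N$. Because $\varphi$ and $\psi$ map $\DD$ into the open disk, the arguments $\lambda$ satisfy $|\lambda| < 1$ everywhere, so no null-set discussion is needed. We want a constant $C$ with
\begin{align*}
	|S_N (\lambda)| \leqslant \frac{C}{|1 - \lambda|^ 2} \qquad (|\lambda| < 1, \ N \geqslant 0).
\end{align*}
We plan to get this by summation by parts. Write $S_N = \frac{\rd}{\rd \lambda} \frac{1 - \lambda^ {N+2}}{1 - \lambda}$, which gives
\begin{align*}
	S_N (\lambda) = \frac{1 - \lambda^ {N+2}}{(1 - \lambda)^ 2} - \frac{(N + 2) \lambda^ {N+1}}{1 - \lambda}.
\end{align*}
The first term is bounded by $2 / |1 - \lambda|^ 2$.

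The second term is the main obstacle, since it carries the factor $N + 2$. We handle it using $|1 - \lambda| \geqslant 1 - |\lambda|$ together with the elementary bound
\begin{align*}
	(N + 2) |\lambda|^ {N+1} \leqslant \frac{C'}{1 - |\lambda|},
\end{align*}
which holds uniformly in $N$ because $\sup_{x \in [0, 1)} (N + 2) x^ {N+1} (1 - x) \leqslant 1$. Combining, the second term is at most $C' / \big( (1 - |\lambda|) |1 - \lambda| \big)$, which is at most $C' / |1 - \lambda|^ 2$ only if $1 - |\lambda| \gtrsim |1 - \lambda|$. That fails in general, so this route is not enough on its own.

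Instead we will use the exact identity $S_N (\lambda) = \sum_{m = 0}^ N \sum_{i = 0}^ {N - m} \lambda^ {m} \lambda^ {i}$, i.e. $S_N = \sum_{m = 0}^ N \lambda^ m D_{N - m} (\lambda)$ with $D_M (\lambda) = \frac{1 - \lambda^ {M+1}}{1 - \lambda}$. Each $D_{N - m}$ is bounded by $2 / |1 - \lambda|$. Regrouping this as the Cauchy product $D_N * D_N$ truncated and applying Abel summation once more should yield $|S_N (\lambda)| \leqslant 4 / |1 - \lambda|^ 2$. If this direct estimate resists, the fallback is to avoid uniform bounds altogether. We would insert a weight $r^ {2(j+k)}$ with $0 < r < 1$, so that the full series sums exactly to
\begin{align*}
	\int_{\DD} \int_{\DD} \frac{\rd A(z) \rd A(w)}{\big( 1 - r^ 2 \varphi (z) \overline{\varphi (w)} \big)^ 2 \big( 1 - r^ 2 \overline{\psi (z)} \psi (w) \big)^ 2}.
\end{align*}
Since $|1 - r^ 2 a| \geqslant \frac{1}{2} |1 - a|$ for $|a| \leqslant 1$ and $0 < r < 1$, this is bounded by $16$ times the hypothesis integral, uniformly in $r$. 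Monotone convergence as $r \to 1^ -$ (the summands $|\langle C_{\psi}^ * C_{\varphi} \sigma_j, \sigma_k \rangle|^ 2 r^ {2(j+k)}$ are nonnegative and increase in $r$) then bounds $\| C_{\psi}^ * C_{\varphi} \|_{S_2}^ 2$ by $16$ times the hypothesis integral, which proves the proposition. This fallback is cleaner, so it is the version we expect to write out.
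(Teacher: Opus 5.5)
Your fallback argument is correct and complete, and it is the version you should write up. Delete the intermediate claim that a Cauchy-product/Abel-summation argument yields $|S_N(\lambda)|\leqslant 4/|1-\lambda|^2$ uniformly in $N$. That bound is false, not merely hard to prove. Indeed $|S_N(-1)| = \lfloor N/2\rfloor + 1$. By continuity, for each $N$ there are $\lambda\in\DD$ near $-1$ with $|S_N(\lambda)| > N/2$, while $C/|1-\lambda|^2$ stays near $C/4$ there. So dominated convergence in $N$ through a pointwise bound of this kind is unavailable, and some regularization is genuinely needed.

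The weighted version is sound. The interchange of $\sum_{j,k}$ with the double integral is justified by $\sum_{j,k}(j+1)(k+1)r^{2(j+k)}=(1-r^2)^{-4}<\infty$ together with $A(\DD)=1$. Your comparison inequality follows from $|1-a|\leqslant |1-r^2a|+(1-r^2)\leqslant 2|1-r^2a|$.

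Your route differs from the paper's in where the regularization is placed. The paper keeps the unweighted partial sums but replaces $\DD$ by $r\DD$, where $|\varphi|$ and $|\psi|$ are bounded by a constant less than $1$. Letting $N\to\infty$ then gives exactly $\int_{r\DD}\int_{r\DD}[1-\varphi(z)\overline{\varphi(w)}]^{-2}[1-\psi(w)\overline{\psi(z)}]^{-2}\,\rd A(z)\,\rd A(w)$. This is bounded by the hypothesis integral with constant $1$. The paper then lets $r\to 1^-$ via Fatou's lemma on the index set, since each truncated matrix entry converges to the true one.

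Your Abel weights $r^{2(j+k)}$ regularize in the index variables instead; this is the same device the paper itself uses in Theorem~\ref{theorem: characterization of CB*Cpsi in S2} and Proposition~\ref{proposition: trace formula, I}. It costs you the elementary comparison inequality and a constant $16$, but it makes the left-hand side monotone in $r$. As a by-product you get more than the paper does: monotone convergence on the left, plus dominated convergence on the right, gives the exact identity $\|C_\psi^*C_\varphi\|_{S_2}^2=\int_\DD\int_\DD[1-\varphi(z)\overline{\varphi(w)}]^{-2}[1-\overline{\psi(z)}\psi(w)]^{-2}\,\rd A(z)\,\rd A(w)$. The dominated convergence step works because the integrands converge pointwise and are dominated by $16$ times the hypothesis integrand. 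The paper's Fatou step records this only as an inequality.
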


	\begin{proof}
		Fixing an $r \in (0, 1)$ and a positive integer $N$, we have
		\begin{align*}
			 &\sum_{j, k = 0}^ N {\Big| \int_{r \DD} (C_{\varphi} \sigma_j) (z) \overline{(C_{\psi} \sigma_k) (z)} \dd A(z) \Big|^ 2}\\
& = \sum_{j, k = 0}^ N {(j + 1) (k + 1) \Big| \int_{r \DD} \varphi (z)^ j \overline{\psi (z)}^ k \dd A(z) \Big|^ 2} \\
			& = \int_{r \DD} \int_{r \DD} \sum_{j, k = 0}^ N {(j + 1) (k + 1) \varphi(z)^ j \overline{\varphi (w)}^ j \overline{\psi (z)}^ k \psi (w)^ k \dd A(z) \rd A(w)} \\
			& = \int_{r \DD} \int_{r \DD} \bigg\{ \frac{1 - \varphi (z)^ {N + 1} \overline{\varphi (w)}^ {N + 1}}{\big[ 1 - \varphi (z) \overline{\varphi (w)} \big]^2} - (N + 1) \frac{\varphi (z)^ {N + 1} \overline{\varphi (w)}^ {N + 1}}{1 - \varphi (z) \overline{\varphi (w)}} \bigg\} \\
			& \, \quad  \times \bigg\{ \frac{1 - \psi (w)^ {N + 1} \overline{\psi (z)}^ {N + 1}}{\big[ 1 - \psi (w) \overline{\psi (z)} \big]^ 2} - (N + 1) \frac{\psi (w)^ {N + 1} \overline{\psi (z)}^ {N + 1}}{1 - \psi (w) \overline{\psi (z)}} \bigg\} \dd A(z) \rd A(w).
		\end{align*}
		Letting $N \rightarrow \infty$, we obtain that
		\begin{align*}
			\sum_{j, k = 0}^ {\infty} {\Big| \int_{r \DD} (C_{\varphi} \sigma_j) (z) \overline{(C_{\psi} \sigma_k) (z)} \dd A(z) \Big|^ 2} = \int_{r \DD} \int_{r \DD} \frac{\rd A(z) \rd A(w)}{\big[ 1 - \varphi (z) \overline{\varphi (w)} \big]^ 2 \big[ 1 - \psi (w) \overline{\psi (z)} \big]^ 2}.
		\end{align*}
		It follows from the assumption and Fatou's lemma that
		\begin{align*}
			\sum_{j, k = 0}^ {\infty} {\left| \langle C_{\psi}^ * C_{\varphi} \sigma_n, \sigma_n \rangle_{L^ 2 (\DD)} \right|^ 2} & = \sum_{j, k = 0}^ {\infty} {\Big| \int_{\DD} (C_{\varphi} \sigma_j) (z) \overline{(C_{\psi} \sigma_k) (z)} \dd A(z) \Big|^ 2} \\
			& \leqslant \varliminf_{n \rightarrow \infty} {\sum_{j, k = 0}^ {\infty} {\Big| \int_{r_n \DD} (C_{\varphi} \sigma_j) (z) \overline{(C_{\psi} \sigma_k) (z)} \dd A(z) \Big|^ 2}} \\
			& \leqslant \int_{\DD} \int_{\DD} \frac{\rd A(z) \rd A(w)}{\big| 1 - \varphi (z) \overline{\varphi (w)} \big|^ 2 \big| 1 - \psi (w) \overline{\psi (z)} \big|^ 2},
		\end{align*}
		where $\{ r_n \}_{n = 1}^ {\infty}$ is any sequence of positive numbers increasing to $1$. This completes the proof.
	\end{proof}

	\begin{prop}
		Suppose that $\varphi$ and $\psi$ are analytic self-mappings of $\DD$. Then  $C_{\psi}^ * C_{\varphi} \in S_1 (L_a^ 2)$ provided that
		\begin{align*}
			\int_{\DD} \frac{\rd A(z)}{\left( 1 - |\varphi (z)|^ 2 \right) \left( 1 - |\psi (z)|^ 2 \right)} < +\infty.
		\end{align*}
	\end{prop}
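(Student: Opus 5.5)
I will adapt the argument of Proposition \ref{proposition: sufficient condition for S1, Hardy space} to the Bergman space, replacing the boundary integral by an area integral. The basic tool is the trace-class test by diagonal sums: it suffices to show that
\begin{align*}
	\sum_{n} \left| \langle C_{\psi}^ * C_{\varphi} e_n, e_n \rangle_{L^ 2 (\DD)} \right| < +\infty
\end{align*}
for every orthonormal basis $\{ e_n \}$ of $L_a^ 2$. A uniform bound will do, because the quantity controlled is the same for all bases. (Equivalently, one can bound $\sum_n |\langle C_\psi^*C_\varphi e_n,f_n\rangle|$ over all pairs of orthonormal sequences. The same estimate below handles that version too, with $f_n$ in place of the second copy of $e_n$.)

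First, for a fixed orthonormal basis, I write
\begin{align*}
	\langle C_{\psi}^ * C_{\varphi} e_n, e_n \rangle = \langle C_{\varphi} e_n, C_{\psi} e_n \rangle = \int_{\DD} e_n (\varphi (z)) \overline{e_n (\psi (z))} \dd A(z).
\end{align*}
Taking absolute values inside and summing, I exchange sum and integral by Tonelli, since everything is nonnegative. Cauchy--Schwarz in $n$ then bounds the sum by
\begin{align*}
	\int_{\DD} \Big[ \sum_n |e_n (\varphi (z))|^ 2 \Big]^ {\frac12} \Big[ \sum_n |e_n (\psi (z))|^ 2 \Big]^ {\frac12} \dd A(z).
\end{align*}

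Next, Theorem \ref{theorem: reproducing kernels} applied to the Bergman kernel gives $\sum_n |e_n (w)|^ 2 = K_w (w) = (1 - |w|^ 2)^ {-2}$ for every $w \in \DD$. Since $\varphi(z),\psi(z)\in\DD$ for every $z\in\DD$, no exceptional null set arises, unlike the boundary case. The whole sum is therefore bounded by
\begin{align*}
	\int_{\DD} \frac{\rd A(z)}{(1 - |\varphi (z)|^ 2)(1 - |\psi (z)|^ 2)},
\end{align*}
which is finite by hypothesis and independent of the basis. Hence $C_{\psi}^ * C_{\varphi} \in S_1 (L_a^ 2)$.

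The only point needing care is the use of the diagonal criterion for $S_1$. Summability of $\sum_n|\langle Ae_n,e_n\rangle|$ over every orthonormal basis, with a uniform bound, does imply trace class, and this is exactly how the paper's definition and Proposition \ref{proposition: sufficient condition for S1, Hardy space} use it. For a more robust argument I would instead take orthonormal sequences $\{e_n\},\{f_n\}$ and bound $\sum_n|\langle C_\varphi e_n, C_\psi f_n\rangle|$ by the same integral; the Cauchy--Schwarz step is unchanged. This gives $\sum_n s_n(C_\psi^*C_\varphi)\le$ the integral, since the trace norm equals the supremum of such sums. I expect no genuine obstacle beyond this bookkeeping.
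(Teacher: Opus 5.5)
Your proposal is correct and follows the paper's route: the paper's proof bounds $\sum_n |\langle C_{\psi}^ * C_{\varphi} e_n, e_n \rangle_{L^ 2 (\DD)}|$ over an arbitrary orthonormal basis exactly as in the Hardy-space trace-class proposition, via Cauchy--Schwarz and $\sum_n |e_n (w)|^ 2 = K_w (w) = (1 - |w|^ 2)^ {-2}$. Your variant with pairs of orthonormal sequences $\{ e_n \}, \{ f_n \}$ also works, but there the kernel identity becomes the inequality $\sum_n |f_n (w)|^ 2 \leqslant K_w (w)$ (Bessel's inequality), because such sequences need not be complete.
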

	
	\begin{proof}
		As in the proof of Proposition \ref{proposition: sufficient condition for S1, Hardy space}, the desired result follows from an estimate of
		\begin{gather*}
			\sum_{n = 1}^ {\infty} {\Big| \left\langle C_{\psi}^ * C_{\varphi} e_n, e_n \right\rangle_{L^ 2 (\DD)} \Big|},
		\end{gather*}
		where $\{ e_n \}_{n = 1}^ {\infty}$ is any orthonormal basis of $L_a^ 2$.
	\end{proof}
	
The final result concerning the operator $C_{\psi}^*C_\varphi$ on the Bergman space is the following proposition, which provides a necessary condition for this operator to belong to the trace class.

	\begin{prop}
		Suppose $\varphi$ and $\psi$ are analytic self-mappings of $\DD$. If $C_{\psi}^ * C_{\varphi}$ belongs to $S_1 (L_a^ 2)$, then the limit
		\begin{align*}
			\lim_{r \rightarrow 1^ -} {\int_{\DD} \frac{\rd A(z)}{\big[ 1 - r \varphi (z) \overline{\psi (z)} \big]^ 2}}
		\end{align*}
		exists. In this case, the above quantity is the trace of $C_{\psi}^ * C_{\varphi}$.
	\end{prop}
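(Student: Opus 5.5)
The plan mirrors the proof of Proposition \ref{proposition: trace formula, I}, with the orthonormal basis $\{\sigma_n\}_{n=0}^{\infty}$ of $L_a^2$ in place of $\{z^n\}$. Assume $C_{\psi}^* C_{\varphi} \in S_1(L_a^2)$. Since the trace can be computed in any orthonormal basis, we have
\begin{align*}
\tr(C_{\psi}^* C_{\varphi}) = \sum_{n=0}^{\infty} \langle C_{\varphi}\sigma_n, C_{\psi}\sigma_n\rangle_{L^2(\DD)} = \sum_{n=0}^{\infty} (n+1)\int_{\DD} \varphi(z)^n\overline{\psi(z)}^n \dd A(z),
\end{align*}
and the series converges absolutely. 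Absolute convergence follows from the definition of $S_1$ given earlier: $\sum_n |\langle A e_n, e_n\rangle| < \infty$ for every orthonormal basis.

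Set $c_n = (n+1)\int_{\DD}(\varphi\overline{\psi})^n \dd A$. Since $\sum |c_n| < \infty$, Abel's theorem, or simply dominated convergence for series (using $|r^n c_n| \leqslant |c_n|$), gives
\begin{align*}
\tr(C_{\psi}^* C_{\varphi}) = \lim_{r\to 1^-} \sum_{n=0}^{\infty} r^n c_n .
\end{align*}

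It remains to identify $\sum_n r^n c_n$ with the integral in the statement for each fixed $r\in(0,1)$. Put $a(z) = \varphi(z)\overline{\psi(z)}$, so $|a(z)| < 1$ on $\DD$ and $|r a(z)| \leqslant r < 1$. The series
\begin{align*}
\sum_{n=0}^{\infty} (n+1) r^n a(z)^n = \frac{1}{(1 - r a(z))^2}
\end{align*}
converges uniformly on $\DD$, being dominated by $\sum (n+1) r^n = (1-r)^{-2}$. We may therefore interchange sum and integral to get
\begin{align*}
\sum_{n} r^n c_n = \int_{\DD} \frac{\rd A(z)}{[1 - r\varphi(z)\overline{\psi(z)}]^2}.
\end{align*}
Combining the two displays shows that the limit exists and equals the trace.

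This is routine; the only point needing care is justifying the two limit interchanges. The first (Abel summation) uses only the absolute summability supplied by the trace class hypothesis. The second is immediate from the uniform bound $(1-r)^{-2}$ at fixed $r$. So I do not expect a genuine obstacle.
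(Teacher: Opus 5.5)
Your proposal is correct and follows the same route as the paper: compute $\sum_{n} r^n \langle C_{\psi}^* C_{\varphi}\sigma_n,\sigma_n\rangle_{L^2(\DD)}$ in the basis $\{\sigma_n\}$, identify it with $\int_{\DD}[1-r\varphi\overline{\psi}]^{-2}\,\rd A$, and let $r\to 1^-$ using the absolute summability of the diagonal entries of a trace-class operator. You also supply the two interchange justifications (the uniform bound $(1-r)^{-2}$ at fixed $r$, and dominated convergence in $n$) that the paper leaves implicit.
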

	
	\begin{proof}
		Computing the series
		\begin{align*}
			\sum_{n = 0}^ {\infty} {r^ n \left\langle C_{\psi}^ * C_{\varphi} \sigma_n, \sigma_n \right\rangle_{L^ 2 (\DD)}}, \ \ \ \  0 \leqslant r < 1,
		\end{align*}
		and then letting $r \rightarrow 1^ -$, we can obtain the desired trace formula.
	\end{proof}
	
	In the remainder of this section, we consider the operator $C_{\varphi} C_{\psi}^*$ on the Bergman space $L_a^2$. The next theorem gives a  criterion for
	$C_{\varphi} C_{\psi}^*$ to be in $S_p (L_a^2)$ with $p\geqslant 2$.

	\begin{thm}
		Suppose that $2 \leqslant p < \infty$,  $\varphi$ and $\psi$ are analytic self-mappings of $\DD$. If
		\begin{align}
			\int_{\DD} \int_{\DD} \frac{\rd A(w) \rd A(z)}{\big| 1 - \overline{\varphi (z)} \psi (w) \big|^ {2 p}} < +\infty,\label{ineq5}
		\end{align}
		then $C_{\varphi} C_{\psi}^ * \in S_p (L_a^2)$.
		In particular, Condition \textup{(\ref{ineq5})} is also necessary  when $p = 2$.
	\end{thm}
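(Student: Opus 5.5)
I will mirror the proof of Theorem \ref{proposition: sufficient condition for Cphi Cpsi* in Sp}, working directly on $L_a^2$, where no auxiliary space is needed. Since
\begin{align*}
(C_\varphi C_\psi^*)^*(C_\varphi C_\psi^*) = C_\psi C_\varphi^* C_\varphi C_\psi^* = C_\psi T_{\mu_\varphi} C_\psi^*
\end{align*}
by Lemma \ref{lemma: Cphi* Cphi is a Toeplitz opeartor, Bergman space}, the operator $C_\varphi C_\psi^*$ lies in $S_p(L_a^2)$ if and only if the positive operator $C_\psi C_\varphi^* C_\varphi C_\psi^*$ lies in $S_{p/2}(L_a^2)$. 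Because $p/2\geqslant 1$, it suffices to show that $\sum_n \langle C_\psi C_\varphi^*C_\varphi C_\psi^* e_n,e_n\rangle^{p/2}$ is bounded uniformly over all orthonormal bases $\{e_n\}$ of $L_a^2$. By the definition of $S_{p/2}$ recalled in Section \ref{preliminary}, this gives membership.

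For a fixed orthonormal basis $\{e_n\}$, Lemma \ref{lemma: Cphi* Cphi is a Toeplitz opeartor, Bergman space} gives
\begin{align*}
\langle C_\psi C_\varphi^*C_\varphi C_\psi^* e_n,e_n\rangle = \|C_\varphi C_\psi^* e_n\|^2 = \int_\DD |C_\psi^* e_n(\varphi(z))|^2\dd A(z).
\end{align*}
Since $p/2\geqslant1$, Jensen's (H\"older's) inequality on the probability measure $\rd A$ bounds the $p/2$-th power of this by $\int_\DD |C_\psi^*e_n(\varphi(z))|^p\dd A(z)$. Summing over $n$ and using Tonelli, I am left with estimating $\sum_n |C_\psi^* e_n(\varphi(z))|^p$ pointwise. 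Using $\sum|c_n|^{q}\leqslant(\sum|c_n|)^{q}$-type monotonicity in the form $\sum_n |c_n|^p\leqslant (\sum_n|c_n|^2)^{p/2}$ for $p\geqslant 2$, together with $C_\psi^*e_n(u)=\langle e_n, C_\psi K_u\rangle$, I get
\begin{align*}
\sum_n |C_\psi^*e_n(u)|^p \leqslant \|C_\psi K_u\|^p = \Big(\int_\DD \frac{\rd A(w)}{|1-\overline{u}\psi(w)|^4}\Big)^{p/2}.
\end{align*}
Applying Jensen once more, with $p/2\geqslant1$ on $\rd A(w)$, gives at most $\int_\DD |1-\overline u\psi(w)|^{-2p}\dd A(w)$. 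Substituting $u=\varphi(z)$ and integrating in $z$ yields exactly the double integral (\ref{ineq5}), uniformly in the basis. The only care needed is measurability and interchange of sum and integrals, which Tonelli handles because everything is nonnegative.

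For necessity when $p=2$, every inequality above becomes an equality. Jensen is trivial when $p/2=1$, and $\sum_n|C_\psi^*e_n(u)|^2=\|C_\psi K_u\|^2$ exactly. Hence
\begin{align*}
\|C_\varphi C_\psi^*\|_{S_2}^2=\sum_n\|C_\varphi C_\psi^*e_n\|^2=\int_\DD\int_\DD|1-\overline{\varphi(z)}\psi(w)|^{-4}\dd A(w)\rd A(z),
\end{align*}
and Theorem \ref{theorem: characterization of S2} gives the converse. The main point to check is the first reduction, namely that the trace of $|T|^{p/2}$ dominated via diagonal entries $\langle |T|e_n,e_n\rangle^{p/2}$ indeed characterizes $S_{p/2}$ for positive $|T|$ and $p/2\geqslant1$. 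This is the paper's stated definition of $S_{p}$ for $p\geqslant 1$, so it can be applied as in Theorem \ref{proposition: sufficient condition for Cphi Cpsi* in Sp}.
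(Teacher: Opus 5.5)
Your proposal is correct and follows the paper's route: the paper reduces to showing $C_\psi T_{\mu_\varphi}C_\psi^*\in S_{p/2}(L_a^2)$ and then omits the details, saying they mimic the Hardy-space Theorem \ref{proposition: sufficient condition for Cphi Cpsi* in Sp}. You supply exactly those details: Jensen on $\rd A$, the inequality $\sum_n|c_n|^p\leqslant(\sum_n|c_n|^2)^{p/2}$, the identity $\sum_n|C_\psi^*e_n(u)|^2=\|C_\psi K_u\|^2$, and Jensen again. All of these inequalities become equalities at $p=2$, which gives the necessity.
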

	
	\begin{proof}
		Applying Lemma \ref{lemma: Cphi* Cphi is a Toeplitz opeartor, Bergman space}, we get that
		\begin{align*}
			\left( C_{\varphi} C_{\psi}^ * \right)^ * \left( C_{\varphi} C_{\psi}^ * \right) = C_{\psi} T_{\mu_{\varphi}} C_{\psi}^ *,
		\end{align*}
		where $$\rd \mu_{\varphi} = \rd A \circ \varphi^ {-1}.$$ Then using an approach similar to that in Theorem  \ref{proposition: sufficient condition for Cphi Cpsi* in Sp}, we can estimate
		\begin{align*}
			\sum_{n = 1}^ {\infty} {\left| \left\langle C_{\psi} T_{\mu_{\varphi}} C_{\psi}^ * e_n, e_n \right\rangle_{L^ 2 (\DD)} \right|^ {\frac{p}{2}}},
		\end{align*}
		 where $\{ e_n \}_{n = 1}^ {\infty}$ is any orthonormal basis of $L_a^ 2$. The remaining steps are omitted for brevity.
	\end{proof}
	
	Recall that Lemma \ref{lemma: trace formula} plays a crucial role in the proof of Proposition \ref{proposition: trace formula, II}. In order to establish a corresponding trace formula for the  operator $C_{\varphi} C_{\psi}^ *$ on the Bergman space $L_a^ 2$,  an analogue of Lemma \ref{lemma: trace formula} is required. The \textit{Berezin transform} of a bounded linear  operator $T$ on the Bergman space $L_a^ 2$, denoted by $\mathcal{B} T$ (to  keep the notation simple, we reuse the notation from the Hardy space setting), is defined by
	\begin{align*}
		\mathcal{B} T (z) = \langle T k_z, k_z \rangle_{L^ 2 (\DD)}, \qquad z \in \DD,
	\end{align*}
	where $$k_z (w) = \frac{1 - |z|^ 2}{\left( 1 - \overline{z} w \right)^ 2}$$ is the normalized reproducing kernel for $L_a^ 2$ at $z\in \mathbb D$.
	
	\begin{lem}[{\cite[Corollary 6.5]{Zhu}}]\label{LF}
		Let $T$ be a bounded linear operator on $L_a^ 2$. If $T$ belongs to $S_1 (L_a^ 2)$, then the integral
		\begin{align*}
			\int_{\DD} \mathcal{B} T (z) \, \frac{\dd A(z)}{(1 - |z|^ 2)^ 2}
		\end{align*}
		converges, which is exactly the trace of $T$.
	\end{lem}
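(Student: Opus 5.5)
The statement to prove is Lemma \ref{LF}: for $T\in S_1(L_a^2)$, the integral $\int_{\DD}\mathcal{B}T(z)\,\rd A(z)/(1-|z|^2)^2$ converges absolutely and equals $\tr(T)$. I would mirror the proofs of Lemmas \ref{lemma: trace formula for positive operators} and \ref{lemma: trace formula}: first prove the identity for positive trace-class operators, then pass to general $T$ by linearity. The one structural change from the Hardy case is that the Bergman norm is an area integral over $\DD$. So instead of a radial limit we get an honest integral identity, with $\rd A(z)/(1-|z|^2)^2$ compensating exactly for the normalization $\|K_z\|^2=(1-|z|^2)^{-2}$.

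\textbf{Positive case.} Let $T\ge0$ and fix an orthonormal basis $\{e_n\}$ of $L_a^2$. By Theorem \ref{theorem: characterization of S2} applied to $\sqrt T$,
\begin{align*}
\tr(T)=\sum_n\big\|\sqrt T e_n\big\|^2_{L^2(\DD)}=\sum_n\int_{\DD}\big|\langle \sqrt T e_n,K_z\rangle\big|^2\dd A(z).
\end{align*}
Tonelli's theorem lets us interchange sum and integral, since all terms are nonnegative. Parseval's identity then gives
\begin{align*}
\sum_n|\langle e_n,\sqrt T K_z\rangle|^2=\|\sqrt T K_z\|^2=\langle TK_z,K_z\rangle=\frac{\mathcal{B}T(z)}{(1-|z|^2)^2}.
\end{align*}
Hence $\tr(T)=\int_{\DD}\mathcal{B}T(z)(1-|z|^2)^{-2}\dd A(z)$, with both sides finite exactly when $T\in S_1$.

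\textbf{General case.} For a self-adjoint $T\in S_1$, write $T=T_+-T_-$ with $T_\pm=(|T|\pm T)/2\ge0$ and both in $S_1$. The positive case shows that $\int\mathcal{B}T_\pm\,\rd\lambda$ converges absolutely and equals $\tr(T_\pm)$, where $\rd\lambda=(1-|z|^2)^{-2}\dd A$. Since $\mathcal{B}$ is linear, subtracting gives the identity for $T$, together with absolute convergence because $|\mathcal{B}T|\le\mathcal{B}T_++\mathcal{B}T_-$. For arbitrary $T\in S_1$, split $T=\re T+\ii\,\mathrm{Im}\,T$; both parts are self-adjoint and trace class, and both $\mathcal{B}$ and $\tr$ are linear, so the formula follows.

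\textbf{Main obstacle.} The only delicate point is justifying the Tonelli interchange, and that the partial sums really give $\|\sqrt T e_n\|^2$ as an area integral. Both are immediate here because the integrands are nonnegative and $L_a^2$ carries the $L^2(\DD,\rd A)$ norm. This is actually easier than the Hardy case, where a monotone limit in $r$ was needed; nothing beyond nonnegativity and Parseval's identity is required.
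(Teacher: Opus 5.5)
Your argument is correct and is essentially the standard one. The paper does not prove this lemma itself but cites \cite[Corollary 6.5]{Zhu}, and that proof is exactly what you wrote: the positive case via $\sqrt{T}$, Tonelli and Parseval, then splitting a trace-class operator into positive trace-class parts, which is also the scheme of the paper's Hardy-space Lemmas \ref{lemma: trace formula for positive operators} and \ref{lemma: trace formula}. You are right that the area-integral norm on $L_a^2$ removes the need for the radial limit in $r$, and you in fact obtain absolute convergence, slightly more than the statement requires.
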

	
	Using this lemma and preceding as in the proof of Proposition \ref{proposition: trace formula, II}, we obtain the following necessary condition for the operator  $C_{\varphi} C_{\psi}^*$  to belong to the trace class.
	
	\begin{prop}
		Suppose that $\varphi$ and $\psi$ are analytic self-mappings of $\DD$. If $C_{\varphi} C_{\psi}^ *$ belongs to $S_1 (L_a^2)$, then the integral
		\begin{align*}
			\int_{\DD} \frac{\rd A(z)}{\big[ 1 - \varphi (z) \overline{\psi (z)} \big]^ 2}
		\end{align*}
		converges. In this case, the above quantity is the trace of $C_{\varphi} C_{\psi}^ *$.
	\end{prop}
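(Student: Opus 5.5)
Following the Hardy-space argument of Proposition \ref{proposition: trace formula, II}, I would compute the Berezin transform of $C_{\varphi} C_{\psi}^ *$ on $L_a^ 2$ and then apply Lemma \ref{LF}.

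\emph{Step 1: the adjoint on kernels.} For every $f \in L_a^ 2$,
\begin{align*}
\langle C_{\varphi}^ * K_z, f \rangle_{L^ 2 (\DD)} = \overline{f(\varphi (z))} = \langle K_{\varphi (z)}, f \rangle_{L^ 2 (\DD)},
\end{align*}
so $C_{\varphi}^ * K_z = K_{\varphi (z)}$, and likewise $C_{\psi}^ * K_z = K_{\psi (z)}$.

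\emph{Step 2: the Berezin transform.} Writing $k_z = (1 - |z|^ 2) K_z$, we get
\begin{align*}
\mathcal{B} (C_{\varphi} C_{\psi}^ *) (z) = \langle C_{\psi}^ * k_z, C_{\varphi}^ * k_z \rangle_{L^ 2 (\DD)} = (1 - |z|^ 2)^ 2 \langle K_{\psi (z)}, K_{\varphi (z)} \rangle_{L^ 2 (\DD)} = \frac{(1 - |z|^ 2)^ 2}{\big[ 1 - \varphi (z) \overline{\psi (z)} \big]^ 2},
\end{align*}
since $\langle K_a, K_b \rangle_{L^ 2 (\DD)} = K_a (b) = (1 - \overline{a} b)^ {-2}$. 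Here $a = \psi(z)$ and $b = \varphi(z)$, so $\overline{a} b = \varphi (z) \overline{\psi (z)}$, which gives the displayed denominator.

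\emph{Step 3: conclude.} Since $C_{\varphi} C_{\psi}^ * \in S_1 (L_a^ 2)$, Lemma \ref{LF} says that the integral of $\mathcal{B} (C_{\varphi} C_{\psi}^ *)$ against $\rd A(z) / (1 - |z|^ 2)^ 2$ converges and equals the trace. The weight cancels the factor $(1 - |z|^ 2)^ 2$ from Step 2, which yields exactly the stated integral and identifies it as $\tr (C_{\varphi} C_{\psi}^ *)$.

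The argument has no real obstacle. The only points to check are that the $K_z$ in Step 1 is the Bergman kernel, not the Hardy one, and that the exponent $2$ in the denominator matches the weight in Lemma \ref{LF}. The sense of "converges" is inherited from that lemma; I would read it as convergence of the integral over $r\DD$ as $r \to 1^ -$, with no separate absolute-convergence argument supplied here.
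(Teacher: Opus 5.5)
Your proposal is correct and follows the paper's proof: you identify $C_{\varphi}^* K_z = K_{\varphi(z)}$ and $C_{\psi}^* K_z = K_{\psi(z)}$, compute $\mathcal{B}(C_{\varphi} C_{\psi}^*)(z) = (1-|z|^2)^2/[1-\varphi(z)\overline{\psi(z)}]^2$, and then invoke Lemma \ref{LF}. Your hedge on the meaning of ``converges'' is unnecessary: for $T \in S_1(L_a^2)$ the Schmidt decomposition gives $\int_{\DD} |\mathcal{B}T(z)| \, \rd A(z)/(1-|z|^2)^2 \leqslant \|T\|_{S_1}$, so the integral in fact converges absolutely.
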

	
	\begin{proof}
		Note that the Berezin transform of $C_{\varphi} C_{\psi}^* $ is given by
		\begin{align*}
			\mathcal{B} (C_{\varphi} C_{\psi}^ *) (z) = \left\langle C_{\varphi} C_{\psi}^ * k_z, k_z \right\rangle_{L^ 2 (\DD)} = (1 - |z|^ 2)^ 2 \left\langle C_{\psi}^ * K_z, C_{\varphi}^ * K_z \right\rangle_{L^ 2 (\DD)}, \ \ \ \ z \in \DD.
		\end{align*}
By a routine calculation similar to that in Proposition \ref{proposition: trace formula, II},  we obtain  $$C_{\psi}^ * K_z = K_{\psi (z)} \ \ \ \  \text{and} \ \ \ \ C_{\varphi}^* K_z = K_{\varphi (z)}.$$ Consequently, we have
		\begin{align*}
			\mathcal{B} (C_{\varphi} C_{\psi}^ *) (z) = \frac{(1 - |z|^ 2)^ 2}{\big[1 - \varphi (z) \overline{\psi (z)} \big]^ 2}, \ \ \ \ z\in \mathbb D.
		\end{align*}
		Then the desired result follows from Lemma \ref{LF} immediately.
	\end{proof}
\vspace{3mm}
\subsection*{Acknowledgment}
This work was supported by the National Natural Science Foundation of China
(12371125)  and the Chongqing Natural Science Foundation (CSTB2024NSCQ-MSX0177).

\end{document}